\newtheorem{thm}[subsection]{Theorem}
\newtheorem{facts}[subsection]{Facts}
\newtheorem{lem}[subsection]{Lemma}
\newtheorem{cor}[subsection]{Corollary}
\newtheorem{prop}[subsection]{Proposition}
\newtheorem{construction}[subsection]{Construction}
\newtheorem{setup}[subsection]{Setup}
\newtheorem{notation}[subsection]{Notation}
\newtheorem{claim}[subsubsection]{Claim}
\theoremstyle{definition}
\newtheorem{rem}[subsection]{Remark}
\newtheorem{defi}[subsection]{Definition}
\newtheorem{ex}[subsection]{Example}
\newcommand{\A}{\mathbb{A}}
\newcommand{\V}{\mathbb{V}}
\newcommand{\F}{\mathbb{F}}
\newcommand{\C}{\mathbb{C}}
\newcommand{\E}{\mathbb{E}}
\renewcommand{\P}{\mathcal{P}}
\newcommand{\Q}{\mathbb{Q}}
\newcommand{\Z}{\mathbb{Z}}
\renewcommand{\H}{\mathrm{H}}
\renewcommand{\cal}{\mathcal}
\newcommand{\N}{\mathbb{N}}
\newcommand{\K}{\mathbb{K}}
\renewcommand{\O}{\mathbb{O}}
\renewcommand{\-}{\text{-}}
\renewcommand{\l}{\lambda}
\renewcommand{\v}{\textbf{v}}
\newcommand{\q}{\textbf{q}}
\newcommand{\rar}{\rightarrow}
\newcommand{\inj}{\hookrightarrow}
\newcommand{\surj}{\twoheadrightarrow}
\newcommand{\ol}{\overline}
\newcommand{\wt}{\widetilde}
\newcommand{\Irr}{\mathrm{Irr} \,}
\newcommand{\Proj}{\mathrm{Proj} \,}
\renewcommand{\mod}{\mathrm{mod}}
\newcommand{\grmod}{\mathrm{grmod}}
\newcommand{\Mod}{\mathrm{Mod}}
\newcommand{\per}{\mathrm{per}}
\newcommand{\D}{\mathrm{D}}
\newcommand{\IC}{\mathrm{IC}}
\newcommand{\Hom}{\mathrm{Hom}}
\newcommand{\RHom}{\mathrm{RHom}}
\newcommand{\Ext}{\mathrm{Ext}}
\newcommand{\End}{\mathrm{End}}
\newcommand{\grHom}{\mathrm{grHom}}
\newcommand{\grExt}{\mathrm{grExt}}
\newcommand{\grEnd}{\mathrm{grEnd}}
\newcommand{\real}{\mathrm{real}}
\newcommand{\BGS}{\mathrm{BGS}}
\newcommand{\spec}{\mathrm{spec}\,}
\renewcommand{\ker}{\mathrm{ker}}
\newcommand{\cok}{\mathrm{cok}}
\newcommand{\grdim}{\mathrm{\underline{dim }}\,}
\renewcommand{\dim}{\mathrm{dim }\,}
\newcommand{\Gr}{\mathrm{Gr}}
\newcommand{\Fr}{\mathrm{Fr}}
\newcommand{\SL}{\mathrm{SL}}
\begin{document}
 
%\include{preamblePap}
%\begin{document}

\title{Grassmannians and Koszul duality}
\author{Jan Weidner}
\date{}
\maketitle

\begin{abstract}
Let $X$ be a partial flag variety, stratified by orbits of the Borel. We give a criterion for the 
category of modular perverse sheaves to be equivalent to modules over a Koszul ring.
This implies that modular category $\mathcal O$ is governed by a Koszul-algebra in small examples.
\end{abstract}

\section*{Introduction}

In the seminal paper \cite{BGS} Beilinson, Ginzburg and Soergel revealed deep Koszul duality phenomena in the representation theory of 
complex Lie algebras. They showed that certain categories of representations admit a natural grading, that was hidden before. 
More precisely let $B_{\C} \subset G_{\C}$ be a complex connected, simply connected reductive group, along with a Borel.
Then they constructed a Koszul ring whose finite dimensional modules are equivalent to $\mathcal O_0$ the principal block of category $\mathcal O$.

Where does this mysterious grading come from? The marvelous answer deserves to be recalled. 
The idea is, that the grading comes from ``weights'' alias Frobenius eigenvalues. 
Frobenius eigenvalues?! We are dealing with complex representations of a complex Lie-algebra... how on earth is this 
connected to the Frobenius homomorphism?

Well, the Beilinson Bernstein localization theorem combined with the Riemann Hilbert correspondence 
and a technical result of Soergel yields an equivalence of categories:
$$\mathcal O_0 \cong \cal P_{(B_{\C})}(X_{\C},\C)$$
The latter is the category of 
perverse sheaves on the complex flag variety $X_{\C}=G_{\C} /B_{\C}$ which are constant along $B_{\C}$-orbits.
Now reductive groups and flag varieties admit incarnations over any field. For example we could consider 
the cousin of our flag variety over the algebraic closure of a finite field $X_{\ol \F_q}=G_{\ol \F_q} /B_{\ol \F_q}$
and study perverse sheaves on it.
For technical reasons $\Q_l$ or $\ol \Q_l$ coefficients are preferable 
to complex coefficients in this setting. 
Anyway it turns out, that there is an equivalence of categories:
$$\cal P_{(B_{\C})}(X_{\C},\C) \cong \cal P_{(B_{\ol \F_q})}(X_{\ol \F_q},\ol \Q_l)$$
Now the situation looks much more Frobenius friendly than in the beginning.
And indeed a general philosophy asserts that things (varieties, sheaves,...) defined over $\F_q$ 
are equivalent to Frobenius equivariant things over $\ol \F_q$. 

For example if we want to produce a grading on the set of homomorphisms $\Hom(M,N)$ between 
two representations $M,N$ we proceed as follows: 
We consider the corresponding perverse sheaves $\cal M_{\ol \F_q},\cal N_{\ol \F_q}$ on $X_{\ol \F_q}$ and
investigate avatars $\cal M_{\F_q},\cal N_{\F_q}$ on $X_{\F_q}$ of them (``mixed sheaves''). 
These avatars are very non-unique and various choices of avatar pairs give rise to various Frobenius actions on 
$\Hom(M,N) \cong \Hom(\cal M_{\ol \F_q},\cal N_{\ol \F_q})$. 
However with a bit of luck and dexterity, we can arrange all Frobenius eigenvalues to be powers of $q$ and 
hence produce a grading on $\Hom(M,N)$. Applying this recipe to $M=N$ a projective generator of $\cal O_0$, 
one gets the Koszul grading of \cite{BGS}.

An important and widely open task is to extend the above to positive characteristics. 
From the geometric viewpoint this means, that we want to replace $\cal P_{(B_{\ol \F_q})}(X_{\ol \F_q},\Q_l)$
by
$$\cal P_{(B_{\ol \F_q})}(X_{\ol \F_q},\F_l)$$
But this screws up the grading! Over $\Q_l$ the possible eigenvalues $q^k , k\in \Z$ were all distinct, but 
now we have for example $q=q^l$.

Nevertheless a first breakthrough was recently obtained by Riche, Soergel and Williamson \cite{RSW}. In this article, we establish another more modest result. 
While the approach of \cite{RSW} is tailored towards the most complicated setup 
(full flag variety and no parity assumptions),
our strategy is designed with easier situations in mind (low dimension or special partial flag varieties). 
Benefits are better bounds on the characteristic and more flexibility on the input space, at the cost of a 
severe parity assumption. This flexibility makes it easier to adapt to the equivariant situation \cite{janEquivariant},
which was in fact the author's original motivation.

Also our line of argumentation is different from \cite{RSW}. While Riche, Soergel and Williamson essentially have to build 
up from scratch, the luxury of parity allows us to argue by comparison with characteristic zero.

Without further ado, let us come to the actual result.
Let $B_{\C}\subset P_{\C} \subset G_{\C}$ be a complex connected reductive group, along with a Borel and a parabolic subgroup. 
Let $X_{\C}=G_{\C}/P_{\C}$ be the corresponding partial flag variety, stratified by $B_{\C}$-orbits. 
Denote by $\cal P_{(B_{\C})}(X_{\C},\F_l)$ the category of perverse sheaves, which are constant along these orbits.
\begin{thm}\label{MainThmIntro}
Suppose that all $B_{\C}$-constructible $\mathbb Z_l$-intersection 
cohomology complexes $IC_w^{\Z_l}$ are parity sheaves. If $l>\mathrm wr(X)$\footnote{This is a mild and explicit condition on $l$ to be explained later \ref{defiWr}.}
then there exists a Koszul algebra $A$ and an equivalence of categories 
between perverse sheaves and finite dimensional modules over $A$:
$$\cal P_{(B_{\C})}(X_{\C},\F_l) \cong \mod \-A$$
\end{thm}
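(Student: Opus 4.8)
The plan is to exhibit a graded algebra with semisimple degree-zero part whose ungraded module category is $\cal P_{(B_{\C})}(X_{\C},\F_l)$, and to prove it Koszul by descending a $\Z_l$-model from the classical characteristic-zero situation. \textbf{Reduction to a graded algebra.} The category $\cal P := \cal P_{(B_{\C})}(X_{\C},\F_l)$ is a finite-length abelian category whose simple objects are the $IC_w^{\F_l}$, indexed by the finitely many $B_{\C}$-orbits; since every orbit is an affine space, $\cal P$ has enough projectives, and writing $P_w$ for the projective cover of $IC_w^{\F_l}$ we get $\cal P \cong \mod\text{-}A$ with $A := \End_{\cal P}(\bigoplus_w P_w)^{\mathrm{op}}$, a finite-dimensional $\F_l$-algebra with $A/\mathrm{rad}\,A \cong \prod_w \F_l$. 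To locate a grading, pass to the homotopy category of parity complexes $K^b(\mathrm{Parity}_{(B_{\C})}(X_{\C},\F_l))$, which for the Schubert stratification carries a perverse $t$-structure; its heart $\cal P^{\mathrm{mix}}$ is a graded quasi-hereditary category whose simple objects are the Tate twists of the $IC_w^{\F_l}$ — here the hypothesis that $IC_w^{\Z_l}$ be a parity sheaf enters, ensuring that $IC_w^{\F_l}$ is likewise parity, equal to $\cal E_w^{\F_l}$ — and the degrading functor restricts on perverse hearts to $\cal P^{\mathrm{mix}} \to \cal P$, identifying $\cal P$ with the ungraded module category of the graded ring $A^{\mathrm{mix}} := \End_{\cal P^{\mathrm{mix}}}(P^{\mathrm{mix}})$, whose degree-zero part is $\prod_w \F_l$. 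It thus suffices to prove that $A^{\mathrm{mix}}$ is Koszul.

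\textbf{An integral model and the characteristic-zero comparison.} Carrying out the same construction over $\Z_l$ — legitimate because the hypothesis is stated integrally — yields $K^b(\mathrm{Parity}_{(B_{\C})}(X_{\C},\Z_l))$, its perverse heart, and a graded ring $A^{\mathrm{mix}}_{\Z_l}$. This is where $l > \mathrm{wr}(X)$ is used for the first time: for such $l$ the stalk and costalk cohomology of the parity complexes involved, and all the $\Hom$- and $\Ext$-groups computing $A^{\mathrm{mix}}_{\Z_l}$, are $l$-torsion-free, so $A^{\mathrm{mix}}_{\Z_l}$ is free of finite rank over $\Z_l$ with $A^{\mathrm{mix}}_{\Z_l} \otimes_{\Z_l} \F_l \cong A^{\mathrm{mix}}$ and $A^{\mathrm{mix}}_{\Z_l} \otimes_{\Z_l} \Q_l \cong A^{\mathrm{mix}}_{\Q_l}$. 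Over $\Q_l$ the hypothesis persists, the parity complexes are exactly the semisimple ones, and $A^{\mathrm{mix}}_{\Q_l}$ is precisely the Koszul ring attached by Beilinson–Ginzburg–Soergel \cite{BGS} to the relevant parabolic block of category $\cal O$. Hence $A^{\mathrm{mix}}_{\Z_l}$ is a $\Z_l$-flat, non-negatively graded algebra with semisimple degree-zero part whose rationalization is Koszul.

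\textbf{Descent of Koszulity.} That $A^{\mathrm{mix}}_{\Z_l}$ is generated in degrees $0$ and $1$ with quadratic relations follows from the same facts over $\Q_l$ together with $\Z_l$-freeness, since the multiplication and relation maps in question are maps of finitely generated torsion-free $\Z_l$-modules and may be tested after $\otimes_{\Z_l}\Q_l$. It then remains to show that the Koszul complex $K^\bullet$ of $A^{\mathrm{mix}}_{\Z_l}$ — the minimal candidate for the linear resolution of its degree-zero part — is exact in positive homological degree. Its terms are $\Z_l$-free, so by flatness $K^\bullet \otimes_{\Z_l} \Q_l$ is the Koszul complex of $A^{\mathrm{mix}}_{\Q_l}$, which is exact in positive degree by \cite{BGS}; therefore each $H^i(K^\bullet)$ with $i>0$ is a finitely generated torsion $\Z_l$-module. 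The point of the bound $\mathrm{wr}(X)$ is exactly that for $l > \mathrm{wr}(X)$ no such torsion can occur, so $H^i(K^\bullet) = 0$ and $A^{\mathrm{mix}}_{\Z_l}$ is Koszul. Since it and every term of $K^\bullet$ are $\Z_l$-free and $K^\bullet$ is exact, $K^\bullet \otimes_{\Z_l} \F_l$ is exact and resolves $(A^{\mathrm{mix}})_0$, so $A^{\mathrm{mix}}$ is Koszul; with the Morita and degrading identifications above this yields $\cal P_{(B_{\C})}(X_{\C},\F_l) \cong \mod\text{-}A^{\mathrm{mix}}$ over a Koszul algebra.

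\textbf{The main obstacle.} Producing the integral model with the right base-change behaviour already consumes the parity hypothesis; the genuinely delicate part is the descent, where Koszulity — a property of an \emph{infinite} resolution — has to be carried from $\Q_l$ through $\Z_l$ down to $\F_l$. Concretely one must show that the single explicit bound $l > \mathrm{wr}(X)$ simultaneously annihilates the $l$-torsion in every group entering the unbounded integral Koszul complex, i.e. that its rational exactness upgrades to honest exactness; essentially all of the work on parity sheaves and their Tate structures is ultimately in the service of this torsion control.
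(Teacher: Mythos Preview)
Your proposal contains two genuine gaps, both stemming from a misreading of what the bound $l>\mathrm{wr}(X)$ actually controls.

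\textbf{First gap: the role of $\mathrm{wr}(X)$.} You invoke $l>\mathrm{wr}(X)$ to obtain torsion-freeness of the integral $\Hom$- and $\Ext$-groups, and later to kill torsion in the cohomology of the integral Koszul complex. Neither use is correct. In the paper, $\mathrm{wr}(X)$ is defined purely in terms of the \emph{Frobenius eigenvalues} occurring on $\End(P^{\K})$ (Definitions~\ref{defWtX}, \ref{defiWr}); it has nothing to do with torsion. Torsion-freeness of $\Hom(P_\lambda^\O,P_\mu^\O)$ and of $\Ext^\bullet(\IC^\O,\IC^\O)$ follows from the parity hypothesis alone (Lemmas~\ref{lemHomProjFree}, \ref{LemExtGrpsTheSame}). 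The bound $l>\mathrm{wr}(X)$ is used for something entirely different: via Dirichlet one chooses $p$ so that the powers $q^0,\ldots,q^{\mathrm{wr}(X)-1}$ are pairwise distinct in $\F_l$; this forces the generalized Frobenius eigenspace decomposition of $\End(P^\O)$ to work over $\O$ (Lemma~\ref{lemPhiDec}, Proposition~\ref{propGradingFullFlag}), and \emph{that} is where the grading comes from. Your grading, by contrast, is built from $K^b(\mathrm{Parity})$ without Frobenius at all; in that framework the condition $l>\mathrm{wr}(X)$ plays no visible role, and you have not explained why it should bound torsion in an infinite Koszul complex.

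\textbf{Second gap: the descent argument is broken.} You claim that quadraticity of $A^{\mathrm{mix}}_{\Z_l}$ follows because ``the multiplication and relation maps \ldots\ may be tested after $\otimes_{\Z_l}\Q_l$''. This is false: surjectivity of a map between free $\Z_l$-modules does \emph{not} descend from $\Q_l$ (multiplication by $l$ on $\Z_l$ is the standard counterexample). So neither generation in degree~$1$ nor quadraticity is established. Your subsequent assertion that $l>\mathrm{wr}(X)$ annihilates the torsion in each $H^i(K^\bullet)$ is simply stated, not argued; nothing in your setup connects the weight range to these groups.

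\textbf{How the paper actually proceeds.} The paper avoids the Koszul complex entirely. After producing the Frobenius-eigenvalue grading on $\End(P^\O)$ and checking that it base-changes to the BGS Koszul grading on $\End(P^\K)$, the transfer of Koszulity is the short Theorem~\ref{thmOKoszul}: if $\Ext^\bullet(L,L)$ is free over $\O$, then Koszulity (meaning $\Ext^i$ concentrated in internal degree $-i$) passes between $\K$, $\O$, and $\F$ immediately, because concentration in a single degree is preserved and reflected by $\otimes\,\K$ and $\otimes\,\F$ on free modules. The required freeness of $\Ext^\bullet(L,L)$ is exactly $\Ext^\bullet(\IC^\O,\IC^\O)$ being free, which is Lemma~\ref{LemExtGrpsTheSame}, a direct consequence of parity. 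So the ``infinite resolution'' obstacle you highlight is bypassed: one never writes down the Koszul complex, and the descent is a one-line observation about graded free $\O$-modules.
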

A concrete space where the above assumptions can be checked are the complex Grassmannians $\Gr(k,n)$ of $k$-planes inside $n$-space:
\begin{cor}
Let $X_{\C}:=\Gr(k,n)$ be a Grassmannian and $l>\min(k,n-k)+1$ be a prime. Then there exists a Koszul algebra $A$ and an equivalence of categories:
$$\cal P_{(B_{\C})}(X_{\C},\F_l) \cong \mod \-A$$
\end{cor}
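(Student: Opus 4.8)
We want to deduce the Corollary from Theorem \ref{MainThmIntro}. The theorem has two hypotheses that need to be verified for $X_\C = \Gr(k,n)$: (1) all $B_\C$-constructible $\Z_l$-intersection cohomology complexes $IC_w^{\Z_l}$ are parity sheaves, and (2) $l > \mathrm{wr}(X)$. The Corollary's hypothesis $l > \min(k,n-k)+1$ should be exactly what is needed to guarantee both.

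So the proof would proceed in two main steps. First I would verify the parity hypothesis. The $B_\C$-orbits on the Grassmannian are the Schubert cells, indexed by $k$-subsets of $\{1,\dots,n\}$ (or partitions fitting in a $k \times (n-k)$ box), and the relevant geometry — Schubert varieties in the Grassmannian — is very well understood. The key point is that the stalks of the integral $IC$ sheaves are governed by Kazhdan–Lusztig polynomials for the Grassmannian, and one needs the primes that appear in the torsion of these stalks to be controlled. The cleanest route is to cite known results on torsion in the (co)homology of Grassmannian Schubert varieties: these torsion primes are small, bounded in terms of $\min(k,n-k)$. In particular, for $l > \min(k,n-k)+1$ one knows $IC_w^{\Z_l} \cong IC_w^{\Z} \otimes \Z_l$ has free stalks and costalks concentrated in the correct parities, hence is a parity sheaf in the sense of Juteau–Mautner–Williamson. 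I would invoke the relevant statement (e.g. from work on torsion in Schubert varieties, or the explicit resolution via Bott–Samelson-type spaces adapted to the cominuscule case) rather than reprove it.

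Second, I would check the inequality $l > \mathrm{wr}(X)$. Here $\mathrm{wr}(X)$ is the explicit combinatorial quantity referenced in footnote \ref{defiWr}; the claim is simply that for $X = \Gr(k,n)$ one has $\mathrm{wr}(\Gr(k,n)) \le \min(k,n-k)+1$, so that the Corollary's hypothesis implies the theorem's. This is a direct computation from the definition of $\mathrm{wr}$ once that definition is in hand — presumably $\mathrm{wr}$ is something like a bound on the width of the relevant interval in the Bruhat order or the maximal $\ell$-torsion exponent appearing, and for cominuscule $G/P$ the Poincaré polynomials of Schubert varieties have a product form (Gaussian binomial coefficients), making the estimate transparent. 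With both hypotheses verified, Theorem \ref{MainThmIntro} applies verbatim and produces the Koszul algebra $A$ together with the equivalence $\cal P_{(B_\C)}(X_\C,\F_l) \cong \mod\text{-}A$.

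The main obstacle is the parity sheaf verification in step one: it is not automatic, since parity sheaves can fail to exist for bad primes, and the whole point of the hypothesis on $l$ is to sit above the torsion. The delicate part is matching the abstract torsion bound coming from the geometry of Grassmannian Schubert varieties with the clean arithmetic condition $l > \min(k,n-k)+1$ — one must make sure the cited torsion results are stated for all Schubert varieties in $\Gr(k,n)$ (not just the big cell) and that the bound is genuinely $\min(k,n-k)$ and not something weaker. Once that identification is pinned down, the rest is bookkeeping.
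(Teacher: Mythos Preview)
Your overall two-step structure --- verify the parity hypothesis, then verify $l > \mathrm{wr}(X)$, then invoke the main theorem --- matches the paper. But your execution of both steps diverges from the paper's, and in the first step you have a genuine misconception.

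\textbf{Parity.} You propose to verify parity by citing torsion bounds on the integral $\IC$-stalks of Grassmannian Schubert varieties, and you assume the hypothesis $l > \min(k,n-k)+1$ is what makes this work. This is not how the paper proceeds, and your assumption is wrong: for Grassmannians, parity of $\IC^{\Z_l}_\l$ holds for \emph{every} prime $l$, with no restriction. The reason is geometric. Zelevinsky constructed small resolutions $\pi:\wt X_\l \to \ol X_\l$ of Grassmannian Schubert varieties; smallness gives $\IC^\O_\l = \pi_* \O[d_\l]$, and since $\pi$ is an even map between cell-stratified varieties, the pushforward of a parity sheaf is parity (by the general results of Juteau--Mautner--Williamson). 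So the prime restriction in the corollary plays no role in the parity verification. Your proposed route via torsion bounds is vague (you do not name a result that would give the bound you want), and more to the point it is unnecessary --- the small resolution argument is both cleaner and strictly stronger.

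\textbf{The weight range.} Here you are speculating about what $\mathrm{wr}$ means (``something like a bound on the width of the relevant interval in the Bruhat order or the maximal $\ell$-torsion exponent''). In fact $\mathrm{wr}(X)$ is defined in terms of the Frobenius eigenvalues on $\End(P^\K)$, and the paper computes it exactly for Grassmannians in an appendix: using the explicit description of the Kazhdan--Lusztig polynomials for Grassmannians in terms of Dyck skew partitions, one shows $\mathrm{wr}(\Gr(k,n)) = \min(k,n-k)+1$. This is where the hypothesis $l > \min(k,n-k)+1$ is actually used --- it is exactly the condition that the relevant Frobenius eigenvalues remain distinct modulo $l$, so that the eigenspace decomposition descends to $\O$.

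In short: your plan is structurally sound, but you misallocate the role of the prime restriction (it is needed only for separating weights, not for parity), you miss the key geometric input (small resolutions) that makes parity automatic, and your treatment of $\mathrm{wr}$ is too vague to count as a proof.
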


\subsection*{Outline}
Let us give an outline of the proof of \ref{MainThmIntro}.
First of all, there are techniques to pass between objects (varieties, sheaves, etc) over $\C$ and their analogues over $\ol {\F}_q$.
The latter has the advantage, that one can bring the Frobenius into the picture, whose eigenspaces will ultimately 
yield the desired gradings. Hence we will work with varieties over $\ol \F_q$ 
and replace verbose notations $X_{\ol \F_q},\cal F_{\ol \F_q},\ldots$ by $X,\cal F,\ldots$ from now on.

Let $X=\bigsqcup_{\l \in \Lambda }X_\l$ be a nice cell stratified variety, for example a partial flag variety. 
For $\E=\Q_l,\Z_l,\F_l$ consider the category $\P_{\Lambda}(X,\E)$ of perverse sheaves. 
We construct projective covers $P_\l^\E\surj IC_\l^\E$, which can be assembled into a minimal projective generator $P^\E:=\bigoplus P_\l^\E$.
By abstract nonsense we have an equivalence
$$\P_\Lambda(X,{\E})\cong \mod \- \End(P^\E)$$
Now if $\E=\Q_l$ then the main statement \cite[4.4.4]{BGS} essentially says that $\End(P^{\Q_l})$ admits a Koszul grading coming from Frobenius eigenvalues.
Our goal is to show, that (under strong assumptions on $X$!) the analogue is
true in the modular situation. 
The general idea of the proof is to use the bridge
$$
\begin{xy}
 \xymatrix{ & \D_{\Lambda}^b(X,\Z_l) \ar[rd]^{\Q_l\otimes \_} \ar[ld]_{\F_l\otimes\-} & \\
\D_{\Lambda}^b(X,{\F_l}) & & \D_{\Lambda}^b(X,\Q_l)
}
\end{xy}
$$
between constructible derived categories, to pass results between modular characteristic and characteristic zero.

One always has $P^{\Z_l} \otimes \F_l=P^{\F_l}$ and under a (severe!) parity assumption, we deduce $P^{\Z_l} \otimes {\Q_l}=P^{\Q_l}$.
In this case we have the following relation between our endomorphism rings:
$$\End(P^{\F_l})={\F_l}\otimes \End(P^{\Z_l}) \text{ and } \End(P^{\Q_l})={\Q_l}\otimes \End(P^{\Z_l})$$
which we depict as 
$$
\begin{xy}
 \xymatrix{
\End(P^{\F_l}) & \ar@{~>}[l]_{{\F_l} \otimes } \End(P^{\Z_l}) \ar@{~>}[r]^{{\Q_l} \otimes } & \End(P^{\Q_l})\\
}
\end{xy}
$$
We already know that the right hand side admits a Koszul grading coming from Frobenius eigenvalues and we want to transfer it to the left. 
In order to do so, we will observe the following:
For a graded ${\Z_l}$-algebra $A$ satisfying some assumptions,
its base-change to $\F_l$ is Koszul if and only if its base-change to ${\Q_l}$ is. We will call such an algebra ${\Z_l}$-Koszul.

$$\text{$A\otimes {\F_l}$ is Koszul} \Leftrightarrow \text{$A$ is ${\Z_l}$-Koszul}  \Leftrightarrow \text{$A\otimes {\Q_l}$ is Koszul} $$ 

% There is a subtle point however. Recall that in order to construct a Frobenius action, one lifts the whole 
% situation to a variety $X_0$ over $\mathbb F_q$ such that $X=\ol \F_q \otimes X_0$. 
% But there are many ways to lift an object $P$ over $X$ to an object $P_0$ 
% over $X_0$. 
% This means the $P_0^{\Q_l}$ corresponding to the Koszul grading from \cite{BGS} and the $P^{\Z_l}_0\otimes \Q_l$ that will 
% result from our constructions are a priori different.
% Fortunately we will be able to show, that both of these lifts coincide. 
% So after a mild assumption on $l$, we get a grading on $End(P^{\Z_l})$ which becomes Koszul after tensoring with ${\Q_l}$
% and hence induces a Koszul grading on $End(P^{\F_l})$.

\subsection*{Acknowledgments}
I want to thank Wolfgang Soergel, Catharina Stroppel and Geordie Williamson for interest and helpful discussions.
Special thanks goes to an anonymous reviewer for spotting many typos and flaws in an older variant of this work.
This work was supported by the DFG via SPP 1388.

%\end{document}
 % \include{PreamblePGrass}
% \begin{document}

\section{The category of $\O$-perverse sheaves}
Let us introduce the categories of sheaves we work with. 

\subsubsection*{Basic notations}
Let $l \neq p$ be primes and $q$ be a power of $p$. 

Let $\K$ be a finite extension of $\Q_l$ and denote by $\O$ its ring of integers. Let $\varpi \in \O$ be a 
uniformizing parameter and $\F:=\O/\varpi$ be
the residue field. 
For instance $\K=\Q_l$ and $\O=\Z_l$ and $\F=\F_l$.

By $\E$ we denote any of $\K,\O,\F$.
The typical situation is that we are interested in $\F$-linear objects (representations, sheaves, $\ldots$) and 
we want to use $\O$ to connect them to their better understood counterparts over $\K$.

For a right (resp. left) noetherian ring $R$ we denote by $\mod \-R$ (resp. $R\-\mod$) the category 
of finitely generated right (resp. left) modules over $R$.

\subsubsection*{The six functors}
By $\D^b_c(X,\E)$ we denote the constructible derived category of a variety $X$ 
over a perfect field of characteristic different from $l$ with coefficients in $\E$.
We will refer to objects of $\D^b_c(X,\E)$ as constructible complexes.
These categories fit into a six functor formalism\footnote{From now on, we will often use 
the same notation for a functor and its derived counterpart. For example $\otimes$ means $\overset{L}{\otimes}$ etc.}
$$f_*,f^*,f_!,f^!,\cal Hom,\otimes$$
The six functors commute with extension of scalars and pullback from varieties $X_0$ over $\F_q$ to their basechange $X$ over $\ol \F_q$. 
For example we have a canonical isomorphism
$$\F \otimes \cal \Hom(\cal F, f_! \cal G)=\cal \Hom(\F\otimes \cal F,  f_! (\F\otimes\cal G))$$
where every functor is derived by our convention.
References for the six functor formalism are for example:
\begin{itemize}
\item \cite{SGA4} for $\E=\F$.
\item \cite{Ekedahl} for the passage from $\F$ to $\O$.
\item \cite{Weil2} for the passage from $\O$ to $\K$.
\end{itemize}

\subsection{Acyclically stratified varieties}
We adapt some basic definitions and notation from \cite{RSW}.
Let $X$ be a variety over a field $k$, together with a finite decomposition into locally closed smooth irreducible affine subvarieties
 $$X=\bigsqcup_{\l \in \Lambda} X_{\l}$$
such that the closure of each $X_{\l}$ is a union of some $X_{\mu}$.
We will denote the dimension of $X_\l$ by $d_\l$ and the inclusion by $l_\l:X_\l \inj X$. 
The inclusion of the closure of a stratum will be denoted by $\ol l_\l : \ol X_\l \inj X$.

If $k$ is algebraically closed, we say that $X=\bigsqcup_{\l \in \Lambda} X_\l$ is a stratification if
$$l^*_\l l_{\mu*} \E$$
has constant cohomology sheaves for all $\l,\mu$
\footnote{This definition is bad in general, usually one would only impose that the cohomology sheaves are local systems.
However for our purposes it is the most convenient. 
}. Here we also introduced the notation $\E$ for the constant sheaf 
on a space (in this case $X_\mu$).

A cell stratification is a stratification, such that
$X_\l \cong \A^{d_\l}$ for all $\l$. 
An acyclic stratification is a stratification, such that all strata are acyclic. Recall that a space $X_\l$ is called acyclic if
its cohomology ring is simply $\O$:
$$\H^\bullet(X_\l,\O)=\H^\bullet(pt,\O)=\O$$

Typical examples of cell stratified varieties are partial flag varieties $X:=G/P$ equipped with their decomposition into Bruhat cells:
$$G/P=\bigsqcup BwP/P$$
Examples of acyclically stratified varieties arise by taking suitable 
fiberbundles whose fibers and base are cell stratified.

Given an acyclically stratified variety $X=\bigsqcup X_\l$, we denote by $\D^b_\Lambda(X,\E)$ the category of all constructible complexes $\cal F$ such that 
$l_\l^*\cal F$ and $l_\l^! \cal F$ both have constant cohomology sheaves for all $\l$.
It is an idempotent complete triangulated category. 
We will be sloppy and usually refer to objects of $\D^b_\Lambda(X,\E)$ as sheaves. 
By $\cal P_\Lambda(X,\E)\subset \D^b_\Lambda(X,\E)$
we denote the full subcategory of perverse sheaves\footnote{In the case $\E=\O$ there are two dual categories, 
which one might call ``perverse sheaves''. We choose the one which is equivalent to the category of finitely generated $\O$-modules for $X=pt$. 
More precisely we use the $t$-structure $p_{1/2}$ and not $p_{1/2}^+$ in the terminology of \cite[3.3.4.]{BBD}.}.

From now on we work with objects (varieties, sheaves, \ldots) defined over a field $k$, where $k$ is either $\F_q$ or its algebraic closure $\ol \F_q$.
As usual objects over $\F_q$ are denoted by $X_0,\cal F_0,\ldots$, while their 
base-change to $\ol \F_q$ is denoted by $X,\cal F,\ldots$.

We say that a locally closed decomposition $X_0=\bigsqcup_{\l \in \Lambda} X_{\l,0}$ is a (acylcic/cell) stratification if its basechange
$X=\bigsqcup_{\l \in \Lambda} X_{\l}$ is. In this case
we denote by $\D^b_\Lambda(X_0,\E)$ all constructible complexes whose base-change lands in $\D^b_\Lambda(X,\E)$.
Again this is an idempotent complete triangulated category.
The category $\cal P_\Lambda(X_0,\E)\subset \D^b_\Lambda(X_0,\E)$ is defined in a similar way.

\begin{ex}
 Suppose that $X=X_\l=\mathbb A^n$ consists of a single cell. Then we have 
$$\D^b_\Lambda(X,\E) \cong \D^b(\mod\-\E) \text{ and } \cal P_\Lambda(X,\E) \cong \mod\-\E$$
The key point is that $\RHom(\E,\E)=\H^\bullet(X,\E)=\E$.
\end{ex}

\begin{notation}
 Let $X$ be an acyclically stratified variety, and $X_\l$ be a stratum. Then there are a couple of canonically associated perverse sheaves
on $X$. We will introduce notation for them here:
\begin{itemize}
\item Let $\Delta_\l:=\Delta^{\E}_\l:={l_\l}_! \E[d_\l]$ denote the standard perverse sheaf.
\item Let $\nabla_\l:=\nabla^{\E}_\l:={l_\l}_* \E[d_\l]$ denote the costandard perverse sheaf.
\item Let $\IC_\l:=\IC^{\E}_\l:={l_\l}_{!*} \E[d_\l]$ denote the intersection cohomology complex. 
\end{itemize}
If the stratification is defined over $\F_q$, the same formulas define $\Delta_{\l,0},\nabla_{\l,0},\IC_{\l,0}$.
\end{notation}

\subsection{Basic properties of sheaves on acyclically stratified varieties}
A good reference for perverse sheaves in general is \cite{BBD}. For $\O$-linear perverse sheaves 
on acyclically stratified varieties \cite{RSW} contains foundational information. 

Let $\cal T$ be a triangulated category equipped with a $t$-structure and heart $\cal A$. 
In this generality there is no natural functor $\D^b(\cal A)\rar \cal T$. 
However in every ``natural'' situation, for example in the case of the perverse $t$-structure,
there is a canonical realization functor 
$$\real: \D^b(\cal A)\rar \cal T$$
see \cite[3.1.9]{BBD}, \cite{DbPerv}.
\begin{thm}\label{thmRealEquiv}
 Let $X_0=\bigsqcup X_{\l,0}$ be an acyclically stratified variety. Then the realization functors 
 are equivalences of categories, commuting with the forgetful functor:
 $$
 \begin{xy}
  \xymatrix{
  \D^b(\P_\Lambda(X_0,\E)) \ar[d] \ar[r]^{\real} & \D^b_\Lambda(X_0,\E) \ar[d] \\
  \D^b(\P_\Lambda(X,\E)) \ar[r]^{\real} & \D^b_\Lambda(X,\E) 
  }
 \end{xy}
$$
 \begin{proof}
  \cite[3.4.1.]{RSW}, \cite[2.3.4]{RSW}
 \end{proof}
\end{thm}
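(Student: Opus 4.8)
The plan is to adapt Beilinson's theorem identifying the bounded derived category of perverse sheaves with the constructible derived category to the present, slightly more flexible, setting (this is exactly what is carried out in \cite{RSW}). The realization functor is constructed from a filtered enhancement of $\D^b_\Lambda(X,\E)$, is automatically perverse $t$-exact, restricts to the identity on the heart $\cal P_\Lambda(X,\E)$, and is visibly compatible with the forgetful functor from the $\F_q$-world to the $\ol\F_q$-world, so the only content of the theorem is that $\real$ is an equivalence. Since the perverse $t$-structure on $\D^b_\Lambda(X,\E)$ is bounded with heart $\cal P_\Lambda(X,\E)$, the essential image of $\real$ is a full triangulated subcategory containing $\cal P_\Lambda(X,\E)$, hence is all of $\D^b_\Lambda(X,\E)$; thus essential surjectivity is formal once full faithfulness is known. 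By the standard d\'evissage on perverse amplitude (the two truncation triangles plus the five lemma), full faithfulness of $\real$ reduces to showing that for all $M,N\in\cal P_\Lambda(X,\E)$ and all $n$ the canonical map
$$\Ext^n_{\cal P_\Lambda(X,\E)}(M,N)\longrightarrow \Hom_{\D^b_\Lambda(X,\E)}(M,N[n])$$
is an isomorphism; for $n\le 1$ this is automatic, so the issue is $n\ge 2$, and by Beilinson's effaceability criterion \cite{DbPerv} it suffices to show that every class in $\Hom_{\D^b_\Lambda(X,\E)}(M,N[n])$ is killed by precomposition with a suitable epimorphism $M'\surj M$ in $\cal P_\Lambda(X,\E)$.

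I would establish this by induction on the number of strata. In the base case $X=X_\l$ is a single acyclic stratum, and by the Example above one has $\cal P_\Lambda(X,\E)=\mod\-\E$ and $\D^b_\Lambda(X,\E)=\D^b(\mod\-\E)$ with $\real$ the tautological equivalence, the crucial input being $\RHom(\E,\E)=\H^\bullet(X_\l,\E)=\E$. For the inductive step, pick a closed stratum $i\colon Z=X_\mu\inj X$ with open complement $j\colon U\inj X$, a union of fewer strata each carrying an induced acyclic stratification; by induction $\real$ is an equivalence for $U$ and for $Z$. Applying $\Hom_{\D^b_\Lambda(X,\E)}(M,-)$ to the gluing triangle $i_*i^!N\to N\to j_*j^*N\to{}$ and using the adjunctions $(i^*,i_*)$ and $(j_!,j^*)$ expresses $\Hom_{\D^b_\Lambda(X,\E)}(M,N[\bullet])$ through $\Hom_{\D^b_\Lambda(Z,\E)}(i^*M,i^!N[\bullet])$ and $\Hom_{\D^b_\Lambda(U,\E)}(j^*M,j^*N[\bullet])$, which by the inductive hypothesis are computed inside the derived categories of $\cal P_\Lambda(Z,\E)$ and $\cal P_\Lambda(U,\E)$, hence by the hyper-$\Ext$ spectral sequences out of the corresponding $\Ext$-groups in those abelian categories. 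One then verifies the effaceability criterion on $X$ by writing $M$ as a quotient of an object built by applying $j_!$ and $i_*$ to projective-like objects over $U$ and $Z$ (e.g. to $j^*$ of a suitable perverse cover and to $i^*M$ itself) whose higher $\Ext$-behaviour is controlled by the inductive hypothesis, and invoking the inductive effaceability statements over $U$ and over $Z$ to annihilate the given class; equivalently, one matches the long exact sequences just produced with the long exact $\Ext$-sequence in $\cal P_\Lambda(X,\E)$ coming from the gluing of the abelian categories $\cal P_\Lambda(Z,\E),\cal P_\Lambda(X,\E),\cal P_\Lambda(U,\E)$.

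The same induction, run with the $\F_q$-categories $\D^b_\Lambda(X_0,\E)$ and $\cal P_\Lambda(X_0,\E)$ in place of the $\ol\F_q$-ones, handles the top row of the square; here one additionally uses the short exact ``Frobenius descent'' sequences expressing $\Hom$ and $\Ext$ over $\F_q$ in terms of Frobenius invariants and coinvariants of the corresponding groups over $\ol\F_q$, which both reduces the $\F_q$-comparison to the already established $\ol\F_q$-one and makes the commutation of the square transparent. Alternatively one can deduce the $\F_q$-statement formally from the $\ol\F_q$-statement together with conservativity of the forgetful functor and the fact that it identifies each $\Hom$-group over $\F_q$ with the Frobenius-fixed part of the corresponding group over $\ol\F_q$.

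I expect the main obstacle to be the inductive step, and specifically the bookkeeping needed to make the gluing argument genuinely compatible on both sides of $\real$: the functors $j_!$ and $j_*$ are not exact on perverse sheaves, so one must either work with their derived functors on $\D^b(\cal P_\Lambda(-))$ — which forces control of acyclic resolutions, and it is here that the acyclicity (not mere local-system-ness) of the strata and, for $\E=\O$, the choice of the $p_{1/2}$-$t$-structure genuinely enter — or, more cleanly, route the whole argument through Beilinson's effaceability criterion so that only epimorphisms of honest perverse sheaves together with the inductive hypothesis are invoked. A secondary point, again for $\E=\O$, is ensuring that no $\varpi$-torsion phenomena break the vanishing and concentration statements used in the d\'evissage and in the base case; this is exactly where the hypothesis that the strata be acyclic, rather than merely have local-system cohomology, does its work.
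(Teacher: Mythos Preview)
Your strategy is sound and in the spirit of \cite{RSW}, to which the paper simply defers. The obstacle you flag in your last paragraph --- that $j_!,j_*$ need not be $t$-exact --- is real, and your sketch does not actually overcome it: effaceability still requires producing an epimorphism $M'\surj M$ of honest perverse sheaves, and in your orientation (removing a single \emph{closed} stratum $Z=X_\mu$, with open complement $U$ a union of many strata, hence not affine in general) the candidate $j_!(\text{projective on }U)$ is not perverse. The resolution is to flip the induction: remove a single \emph{open} affine stratum $U=X_\mu$ with closed complement $Y$, so that $j_!,j_*$ are $t$-exact and $i^!N$ has perverse amplitude in $[-1,0]$ for perverse $N$. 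This is exactly the induction pattern of \cite[2.3.1--2.3.4]{RSW}, whose construction of projectives is reproduced in this paper as Construction~\ref{constrproj}; running it one checks directly that the explicit projectives $P_\l$ satisfy $\Hom_{\D}(P_\l,N[n])=0$ for all $n>0$, so effaceability is immediate (take $M'\surj M$ a projective surjection) and no matching of long exact sequences across $\real$ is needed. Your treatment of the $\F_q$-row via Frobenius descent is correct and matches \cite[3.4.1]{RSW}.
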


\begin{prop}\label{thmFiniteCohoDim}
Let $X=\bigsqcup X_{\l}$ be an acyclically stratified variety. Then the following assertions hold:
\begin{enumerate}
\item The category $\P_\Lambda(X,\E)$ has enough projectives.
\item The category $\P_\Lambda(X,\E)$ has finite projective dimension.
\item The category $\P_\Lambda(X,\E)$ is noetherian. If $\E=\F,\K$ it is also artinian. 
\end{enumerate}
\begin{proof}
 \begin{enumerate}
\item \cite[2.3.3]{RSW}
\item \cite[2.3.4]{RSW}
\item \cite[4.3.1]{BBD} , \cite[4.0.(b)]{BBD}
\end{enumerate}
\end{proof}
\end{prop}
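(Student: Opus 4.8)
The plan is to establish the three assertions of Proposition \ref{thmFiniteCohoDim} by reducing everything to local statements on the strata, using the recollement formalism associated to the stratification.

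\medskip

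\textbf{Enough projectives.} First I would construct projective objects explicitly. For the open stratum $X_\l$ with inclusion $l_\l \colon X_\l \inj X$, the pushforward of a projective generator of $\P_\Lambda(X_\l,\E)$ along ${l_\l}_!$ produces a projective object, because ${l_\l}_!$ is left adjoint to the exact functor $l_\l^*$. Moving to a (relatively closed) stratum, one uses that $\Delta_\l = {l_\l}_! \E[d_\l]$ maps onto $\IC_\l$, and that by the local example $\D^b_\Lambda(\A^n,\E) \cong \D^b(\mod\-\E)$ the objects ${l_\l}_! \E[d_\l]$ are projective relative to the part of the stratification they are supported on. Then one argues by induction on the number of strata: given a perverse sheaf $\cal F$, restrict to the open part, lift a projective cover there, and repair the resulting complex over the closed complement. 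The technical engine is the long exact sequence of perverse cohomology coming from the triangle $j_! j^* \to \mathrm{id} \to i_* i^*$, plus the fact (from \ref{thmFiniteCohoDim}(2), which I would actually prove first) that there are no higher obstructions. In practice I would simply cite \cite[2.3.3]{RSW}, which does exactly this.

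\medskip

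\textbf{Finite projective dimension.} The key input is Theorem \ref{thmRealEquiv}: the realization functor $\real \colon \D^b(\P_\Lambda(X,\E)) \to \D^b_\Lambda(X,\E)$ is an equivalence. Therefore $\Ext^n_{\P_\Lambda(X,\E)}(\cal F,\cal G) = \Hom_{\D^b_\Lambda(X,\E)}(\cal F,\cal G[n])$, and it suffices to bound the latter uniformly in $n$. One does this by dévissage: every object of $\D^b_\Lambda(X,\E)$ has a finite filtration with subquotients of the form ${l_\l}_! \cal M$ or ${l_\l}_* \cal M$ with $\cal M \in \D^b(\mod\-\E)$, and $\RHom$ between such objects is computed by the six-functor formalism in terms of $\RHom$ over a point twisted by cohomology of strata and their intersections. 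Since each stratum is acyclic (this is where acyclicity, not just cell-ness, is used) and $\O$, $\K$, $\F$ have finite global dimension, these $\RHom$-groups vanish in degrees beyond an explicit bound depending only on the number of strata and the codimensions. Again \cite[2.3.4]{RSW} records this.

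\medskip

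\textbf{Noetherian, and artinian over $\F, \K$.} Here the cleanest route is to quote \cite{BBD}: by \cite[4.3.1]{BBD} the heart of a $t$-structure on a category with the relevant finiteness is noetherian once one knows the simple objects and that every object has finite length locally; and \cite[4.0.(b)]{BBD} gives the artinian property when the coefficient ring is a field, since then the $\IC_\l$ exhaust the simple objects and every perverse sheaf has a finite Jordan--Hölder filtration by them. Over $\O = \F$ one only gets noetherian (not artinian) because already at a point $\mod\-\O$ is noetherian but not artinian; this is exactly the asymmetry the statement records.

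\medskip

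The main obstacle I anticipate is (2): producing a \emph{uniform} bound on projective dimension rather than just finiteness object-by-object. The dévissage reduces it to a finite list of $\RHom$ computations between standard and costandard objects on closures of strata, and one must check that the acyclicity hypothesis genuinely forces these to be concentrated in a bounded range of degrees — for a general stratification with local systems this would fail, which is precisely why the definition in the excerpt insists on \emph{constant} cohomology sheaves and acyclic strata. Assertions (1) and (3) are then comparatively formal, following from (2) together with Theorem \ref{thmRealEquiv} and standard $t$-structure yoga from \cite{BBD}.
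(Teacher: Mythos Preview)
Your proposal is correct and matches the paper's approach exactly: the paper's proof consists solely of the citations \cite[2.3.3]{RSW}, \cite[2.3.4]{RSW}, \cite[4.3.1]{BBD}, \cite[4.0.(b)]{BBD}, and your sketch accurately summarizes the arguments behind those references. One small caveat: you invoke Theorem~\ref{thmRealEquiv} to prove finite projective dimension, but in \cite{RSW} these two statements are established together in 2.3.4 (finite cohomological dimension is in fact the input that makes $\real$ an equivalence), so be careful not to present the dependency as going the other way.
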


\subsubsection*{Perverse sheaves as modules}
It is often useful to know, that the category of perverse sheaves can be realized as modules over a ring:
\begin{thm}\label{thmMoritaProj}
Let $\cal A$ be a noetherian
abelian category and $P\in \cal A$ be a projective generator. Then $\End(P)$ is right noetherian and 
\begin{equation*}
\left\{
\begin{aligned}
\cal A &\to \mod \-\End(P) \\
M & \mapsto  \cal A(P,M)
\end{aligned}
\right.
\end{equation*}
defines an equivalence between $\cal A$ and the category of finitely generated right modules over $\End(P)$.

Furthermore if $P=\bigoplus P_\l$ is a decomposition, then projection onto $P_\l$ gives an idempotent $e_\l$ in $A:=\End(P)$
and under the above equivalence we have
$$P_\l \mapsto P_\l:=e_\l A$$
\begin{proof}
 \cite[page 55]{BassKtheo} 
\end{proof}
\end{thm}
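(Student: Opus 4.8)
The statement is essentially a version of the Gabriel--Mitchell--Freyd characterization of module categories ("Morita theory"), so the plan is to reconstruct the standard argument while keeping track of the noetherian hypothesis. Set $A := \End(P)$ and define the functor $F := \cal A(P,-)\colon \cal A \to \mathrm{Ab}$. Since $P$ is an object of the additive category $\cal A$, composition makes $F(M)$ a right $A$-module, and $F$ is an additive functor into $\mod\text{-}A$ once we check finite generation (see below). The first step is to record the two formal properties that make this work: $F$ is exact, because $P$ is projective; and $F$ commutes with arbitrary direct sums that exist in $\cal A$ (here only finite ones are needed), because $P$ is a compact/small object in the relevant sense — in a noetherian abelian category every object is noetherian, and one checks that $\Hom(P,-)$ preserves the filtered colimits of subobjects that arise, which is what is actually used. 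The key normalization is $F(P) = \cal A(P,P) = A$ as a right $A$-module, so $F$ sends the generator to the free rank-one module.

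Next I would build the quasi-inverse $G\colon \mod\text{-}A \to \cal A$, $G := -\otimes_A P$, defined first on free modules by $A^n \mapsto P^{\oplus n}$ and on a presentation $A^m \to A^n \to N \to 0$ by taking the cokernel of the induced map $P^{\oplus m}\to P^{\oplus n}$ in $\cal A$; functoriality and well-definedness follow from $\Hom_A(A^m, A^n) = \Hom_{\cal A}(P^{\oplus m}, P^{\oplus n})$, i.e. the Yoneda-type identification $F(P^{\oplus n}) = A^n$ together with full faithfulness of $F$ on the additive subcategory generated by $P$ (this last point is immediate from $\cal A(P^{\oplus a}, P^{\oplus b}) = A^{b\times a} = \Hom_A(A^a, A^b)$). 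The unit $N \to FG(N)$ and counit $GF(M)\to M$ are then the obvious maps. That $\varepsilon\colon GF(M)\to M$ is an isomorphism for all $M$ is where the generator hypothesis enters: because $P$ is a projective generator, every $M$ admits a (finite, by noetherianity) presentation $P^{\oplus m}\to P^{\oplus n}\to M\to 0$; applying $F$ and then $G$ and using exactness of $F$ plus the five lemma reduces the claim to the case $M = P^{\oplus n}$, where it is the identity. Symmetrically $\eta\colon N \to FG(N)$ is an isomorphism by applying $F$ to a free presentation of $N$ and using exactness of $F$.

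For the noetherian bookkeeping: $A = \End(P)$ is right noetherian because $F$ identifies right ideals of $A$ with certain subobjects of $F(P)$ compatibly with inclusions, and $P$, being an object of the noetherian category $\cal A$, is a noetherian object; more precisely a strictly ascending chain of right ideals $I_1 \subsetneq I_2 \subsetneq \cdots$ would, via $G$ or directly via images of maps $P^{\oplus ?}\to P$, produce a strictly ascending chain of subobjects of a fixed object, contradiction. The same identification shows $F(M)$ is a finitely generated $A$-module for every $M \in \cal A$: pick an epimorphism $P^{\oplus n}\twoheadrightarrow M$ (generator), apply the exact functor $F$ to get $A^n \twoheadrightarrow F(M)$. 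Finally the addendum about the decomposition $P = \bigoplus P_\l$ is a direct computation: the projector $e_\l \in A$ onto the summand $P_\l$ satisfies $F(P_\l) = \cal A(P, P_\l) \cong \cal A(P, P)e_\l = Ae_\l$ — wait, one must be careful with sides, since $F$ lands in \emph{right} modules the summand $P_\l$ of the \emph{source}-argument picture gives $F(P_\l) = e_\l A$ as a right ideal — and one checks the identification is as right $A$-modules, which is exactly the assertion $P_\l \mapsto e_\l A$.

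The main obstacle is not conceptual but hygienic: making sure the finiteness conditions line up so that $F$ genuinely lands in $\mod\text{-}A$ (not just in all $A$-modules) and that $G$ is well defined on $\mod\text{-}A$ (which requires \emph{finite} presentations, hence uses noetherianity of $A$, which in turn we just proved from noetherianity of $\cal A$) — so the steps must be ordered: (1) $F$ exact and $F(P) = A$; (2) $A$ right noetherian; (3) hence $F$ lands in $\mod\text{-}A$ and $\mod\text{-}A$ has finite presentations; (4) construct $G$; (5) unit/counit are isomorphisms via five lemma on presentations; (6) the idempotent addendum. Since the paper simply cites \cite[page 55]{BassKtheo} for this, in the write-up I would either reproduce the six steps tersely as above or defer wholesale to Bass; the genuinely load-bearing inputs from earlier in the text are Proposition \ref{thmFiniteCohoDim}(1) (enough projectives, so a projective generator exists at all) and \ref{thmFiniteCohoDim}(3) (noetherian), without which neither $P$ nor the finiteness statements would make sense.
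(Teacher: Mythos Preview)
Your argument is correct and is precisely the standard Gabriel--Freyd/Morita argument; the paper itself gives no proof beyond the bare citation to \cite[page 55]{BassKtheo}, so you have effectively reconstructed what that reference contains. One small remark on framing: Theorem~\ref{thmMoritaProj} is stated abstractly for any noetherian abelian category with a projective generator, so Proposition~\ref{thmFiniteCohoDim}(1),(3) are not inputs to its proof but rather the hypotheses one checks later when \emph{applying} the theorem to $\cal P_\Lambda(X,\E)$; your write-up should keep the abstract statement self-contained.
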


\subsubsection*{Perverse sheaves as $(A,\phi)$-modules}

Theorem \ref{thmMoritaProj} enables us to realize $\P_\Lambda(X,\E)$ as a category of modules. However it does not apply to 
$\P_\Lambda(X_0,\E)$ since there are not enough projectives in this category. 
The solution is to interpret $\P_\Lambda(X_0,\E)$ as Frobenius equivariant objects in $\P_\Lambda(X,\E)$ 
and translate Frobenius equivariance into the language of modules. This strategy was carried out in \cite{RSW}.
Let us quickly recall what we need.
\begin{defi}
Let $A$ be an $\E$-algebra equipped with an automorphism $\phi$.
We denote by $\Mod \-(A,\phi)$ the category of pairs $(M,\phi)$ 
where $M$ is a right $A$-module and $\phi:M\rar M$ is an $\E$-linear bijection
satisfying $\phi(m\cdot a)=\phi(m) \cdot \phi(a)$.
A morphism $f:(M,\phi)\rar (N,\phi)$ is an $A$-linear map that commutes with $\phi$.
\end{defi}
If $A$ is right noetherian we denote by $\mod \-(A,\phi)\subset \Mod \-(A,\phi)$ the subcategory 
of objects $(M,\phi)$ for which $M$ is noetherian.
\begin{lem}
Let $\E=\F,\O$. Then the functor $\P_\Lambda(X_0,\E)\rar \P_\Lambda(X,\E)$ induces an equivalence
 between $\P_\Lambda(X_0,\E)$ and the category of pairs $(\cal F,F_{\cal F})$, where $\cal F \in \P_\Lambda(X,\E)$ and
 $F_{\cal F}:\Fr^* \cal F \rar \cal F$ is an isomorphism.
 
 Here $\Fr$ is the geometric Frobenius.
 \begin{proof}
  \cite[5.1.2]{BBD}
 \end{proof}
\end{lem}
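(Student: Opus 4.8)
The plan is to construct an equivalence between $\P_\Lambda(X_0,\E)$ and the category of pairs $(\cal F, F_{\cal F})$ with $F_{\cal F}:\Fr^*\cal F\xrightarrow{\sim}\cal F$, by invoking the general machinery of descent along the Frobenius for mixed sheaves from \cite[5.1.2]{BBD}. First I would recall the setup: $X_0$ is a variety over $\F_q$ with base-change $X$ over $\ol\F_q$, and geometric Frobenius $\Fr:X\to X$ is the morphism induced by the $q$-power map. The general statement of \cite[5.1.2]{BBD} says that for any variety $Y_0/\F_q$, the category $\D^b_c(Y_0,\E)$ is equivalent to the category of pairs $(\cal F,F_{\cal F})$ where $\cal F\in\D^b_c(Y,\E)$ and $F_{\cal F}:\Fr^*\cal F\xrightarrow{\sim}\cal F$ — with the caveat that this is not literally an equivalence in \cite{BBD} but becomes one after a suitable modification (as explained there, and more carefully in later literature); since the excerpt instructs us to cite \cite[5.1.2]{BBD}, I would simply take this as the known input. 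The one thing requiring actual argument is that this equivalence, which is stated for the full constructible derived categories, restricts to an equivalence on the perverse hearts of the stratified subcategories $\P_\Lambda(X_0,\E)$ and $\P_\Lambda(X,\E)$.

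The key steps, in order, are as follows. (1) Observe that $\Fr$ is stratum-preserving: since the stratification $X_0=\bigsqcup X_{\l,0}$ is defined over $\F_q$, each stratum $X_{\l,0}$ is $\Fr$-stable, hence $\Fr^*$ preserves the condition of being $\Lambda$-constructible, i.e. $\Fr^*$ restricts to an autoequivalence of $\D^b_\Lambda(X,\E)$. Therefore the pair-category associated to $\D^b_c(X_0,\E)$ restricts to one associated to $\D^b_\Lambda(X_0,\E)$, and the equivalence of \cite[5.1.2]{BBD} identifies $\D^b_\Lambda(X_0,\E)$ with pairs $(\cal F,F_{\cal F})$, $\cal F\in\D^b_\Lambda(X,\E)$. (2) Check that $\Fr^*$ is $t$-exact for the perverse $t$-structure. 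This is standard: $\Fr$ is a finite (in fact radicial, universal homeomorphism-type) morphism, so $\Fr^*=\Fr^!$ up to the relevant normalization and both $\Fr^*$ and $\Fr_*$ are perverse-exact; concretely one checks the support and cosupport conditions stratum by stratum using that $\Fr$ restricts to the $q$-power Frobenius on each $X_\l$, which does not change dimensions. Here one must use the convention fixed in the excerpt's footnote that for $\E=\O$ we take the $t$-structure $p_{1/2}$, not $p_{1/2}^+$; perverse-exactness of $\Fr^*$ holds for this choice. (3) Conclude: if $\cal F\in\D^b_\Lambda(X,\E)$ underlies an object of $\D^b_\Lambda(X_0,\E)$ via a Frobenius structure $F_{\cal F}$, then $\cal F$ is perverse over $\ol\F_q$ if and only if the corresponding object of $\D^b_\Lambda(X_0,\E)$ is perverse over $\F_q$ — because the forgetful functor is $t$-exact and conservative, and a cohomological functor that kills an object after a conservative $t$-exact functor kills it already. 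Hence the equivalence of step (1) restricts to an equivalence between the hearts, which is exactly the claim, with the forgetful functor $\P_\Lambda(X_0,\E)\to\P_\Lambda(X,\E)$ matching the functor $(\cal F,F_{\cal F})\mapsto\cal F$.

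The main obstacle is the hypothesis $\E=\F,\O$ and the corresponding integrality subtleties. Over $\K=\Q_l$ (or a finite extension) there is no issue, and \cite[5.1.2]{BBD} is in its native habitat. For $\E=\F$ one needs the analogue of \cite{BBD}'s descent in the modular-coefficient setting; this rests on the six-functor formalism over $\F$ from \cite{SGA4} together with the fact (used repeatedly in \cite{RSW}) that Frobenius descent for étale sheaves with finite coefficients works exactly as in the $\ell$-adic case. For $\E=\O$ one additionally passes through the inverse-limit construction of \cite{Ekedahl}, and here the only genuine point to verify is that the Frobenius structure is compatible with the limit, i.e. that an $\O$-linear $(\cal F,F_{\cal F})$ is the same datum as a compatible system of $\O/\varpi^n$-linear ones — which is again part of the foundational package in \cite{RSW}. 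So in the write-up I would state the three steps above explicitly, attribute the descent input to \cite[5.1.2]{BBD} (and \cite{RSW} for the modular and integral refinements), and spend the actual proof effort only on perverse-exactness of $\Fr^*$ for the $p_{1/2}$ $t$-structure, since that is the one place where the stratified and integral hypotheses interact non-formally.
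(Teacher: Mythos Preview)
Your proposal is correct and follows the same approach as the paper, which simply cites \cite[5.1.2]{BBD} without further comment. You have unpacked what that citation entails---Frobenius descent on the full constructible category, stratum-preservation, and perverse $t$-exactness of $\Fr^*$---whereas the paper leaves all of this implicit; your write-up is more thorough than what the paper actually provides.
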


\begin{ex}
 Let $\cal F,\cal G \in \P_\Lambda(X,\O)$ be two perverse sheaves equipped with lifts $\cal F_0,\cal G_0$ to $X_0$. 
 Then $\Hom(\cal F,\cal G)$ is equipped with a Frobenius automorphism constructed as follows:
 $$
 \begin{xy}
  \xymatrix{
  \Hom(\cal F,\cal G) \ar[r]^{\Fr^*} & \Hom(\Fr^* \cal F,\Fr^* \cal G) \ar[rr]^{F_{\cal G} \circ \_ \circ F_{\cal F}^{-1}} && \Hom(\cal F,\cal G)
  }
 \end{xy}
$$
In particular let $P\in \P_\Lambda(X,\O)$ be a projective generator with lift $P_0$. Then $\End(P)$ is equipped with a 
Frobenius automorphism.
\end{ex}

\begin{prop}
 Let $X_0=\bigsqcup X_{\l,0}$ be an acyclically stratified variety, $P\in \P_\Lambda(X,\O)$ be a projective generator 
 and $P_0$ be a lift. Let $(A,\phi)$ be $\End(P)$ equipped with the Frobenius action.
 
 Then $\Hom(P,\_)$ induces an equivalence:
 $$\P_\Lambda(X_0,\O) \rar \mod \-(A,\phi)$$
which commutes with the ``forgetful'' functors on both sides.
 \begin{proof}
  \cite[3.3.7]{RSW}
 \end{proof}
\end{prop}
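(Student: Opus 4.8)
The plan is to stack the two equivalences already at our disposal. The preceding Lemma identifies $\P_\Lambda(X_0,\O)$ with the category of Frobenius-equivariant pairs $(\cal F,F_{\cal F})$, where $\cal F\in\P_\Lambda(X,\O)$ and $F_{\cal F}\colon\Fr^*\cal F\xrightarrow{\sim}\cal F$, and Theorem \ref{thmMoritaProj} supplies the Morita equivalence $\Hom(P,\_)\colon\P_\Lambda(X,\O)\xrightarrow{\sim}\mod\-A$. I would bridge them by showing that, transported through Morita, the autoequivalence $\Fr^*$ of $\P_\Lambda(X,\O)$ becomes the autoequivalence of $\mod\-A$ that twists the right $A$-action by the automorphism $\phi$.

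First I would note that $\Fr$ is an automorphism of $X$ preserving the stratification, so $\Fr^*$ restricts to an autoequivalence of $\P_\Lambda(X,\O)$ — this is already implicit in the preceding Lemma. Because $P$ carries the lift $P_0$, there is the distinguished isomorphism $F_P\colon\Fr^*P\xrightarrow{\sim}P$, and by construction $\phi(a)=F_P\circ\Fr^*(a)\circ F_P^{-1}$ on $A=\End(P)$. Then, for arbitrary $\cal F\in\P_\Lambda(X,\O)$, I would compose the functoriality bijection $\Fr^*\colon\Hom(P,\cal F)\xrightarrow{\sim}\Hom(\Fr^*P,\Fr^*\cal F)$ with precomposition by $F_P^{-1}$ to obtain a bijection of $\O$-modules $\Hom(P,\cal F)\xrightarrow{\sim}\Hom(P,\Fr^*\cal F)$, natural in $\cal F$, and check by hand that it carries the $A$-action on the source twisted by $\phi$ to the honest $A$-action on the target. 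This is the only genuine computation in the argument.

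Granting it, I would send a pair $(\cal F,F_{\cal F})$ to the module $M:=\Hom(P,\cal F)$ equipped with the $\O$-linear bijection $\phi_M\colon M\xrightarrow{\sim}\Hom(P,\Fr^*\cal F)\xrightarrow{\Hom(P,F_{\cal F})}M$; naturality forces $\phi_M(m\cdot a)=\phi_M(m)\cdot\phi(a)$, and a morphism of pairs goes to a $\phi$-equivariant $A$-linear map, so we get a functor $\P_\Lambda(X_0,\O)\to\Mod\-(A,\phi)$. It is an equivalence since $\Hom(P,\_)$ is one and the $\phi$-bookkeeping is reversible — a quasi-inverse equips the object of $\P_\Lambda(X,\O)$ corresponding to $M$ under \ref{thmMoritaProj} with the Frobenius structure obtained from $\phi_M$ by running the identifications backwards. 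Its essential image lies in $\mod\-(A,\phi)$, not merely $\Mod\-(A,\phi)$, because $\Hom(P,\cal F)$ is finitely generated for $\cal F\in\P_\Lambda(X,\O)$, the category $\P_\Lambda(X,\O)$ being noetherian by \ref{thmFiniteCohoDim}. Commutation with the two forgetful functors is automatic, since forgetting on either side just drops the datum $F_{\cal F}$, respectively $\phi_M$.

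The hard part will be the middle step: fixing all variances and checking that the natural isomorphism identifying $\Hom(P,\Fr^*(\_))$ with the $\phi$-twist of $\Hom(P,\_)$ is compatible with composition, so that the semilinearity constraint it produces agrees on the nose with the one in the definition of $\Mod\-(A,\phi)$ — rather than with its $\phi^{-1}$-variant. Once that is pinned down, the rest is formal.
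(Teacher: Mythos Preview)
Your proposal is correct and is precisely the standard argument; the paper itself does not supply a proof but merely cites \cite[3.3.7]{RSW}, whose argument is exactly the stacking you describe. The one point you flag as delicate --- the sign of the twist, i.e.\ whether one lands in $\Mod\-(A,\phi)$ or $\Mod\-(A,\phi^{-1})$ --- is resolved by the computation you already wrote down: for $f\in\Hom(P,\cal F)$ and $a\in A$ one has $\phi_M(f\cdot a)=F_{\cal F}\circ\Fr^*(f)\circ\Fr^*(a)\circ F_P^{-1}=\bigl(F_{\cal F}\circ\Fr^*(f)\circ F_P^{-1}\bigr)\cdot\bigl(F_P\circ\Fr^*(a)\circ F_P^{-1}\bigr)=\phi_M(f)\cdot\phi(a)$, matching the convention in the definition of $\Mod\-(A,\phi)$ on the nose.
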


\subsection{Parity sheaves}
The reference on parity sheaves is \cite{JMWparity}.

An object $\cal F\in \D^b_\Lambda(X,\E)$ is called 
even (odd) if for all $\l$ the objects $l^*_\l(\cal F)$ and $l^!_\l(\cal F)$ 
have constant torsion free cohomology sheaves, which vanish in odd (even) degrees.
An object $\cal F\in \D^b_\Lambda(X,\E)$ is called parity, if it is a direct sum of an even and an odd object.

Let $X=\bigsqcup X_\l$ be a stratified variety. We say that $X$ satisfies $\IC^\O$-parity, if for all $\l$ the object $\IC^\O_\l$ is parity.
\begin{lem}
We have $\K\otimes \IC^\O_\l=\IC_\l^\K$. If $\IC^\O_\l$ is parity, then we also have $\F \otimes \IC^\O_\l=\IC^\F_\l$ and the latter is also parity.
\begin{proof}
 Since the (co)stalks of parity sheaves are free over $\O$, we see that
$\F \otimes \IC^\O_\l$ satisfies the (co)stalk conditions for being an $\IC$-sheaf.
\end{proof}
\end{lem}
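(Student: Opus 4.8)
The plan is, in both cases, to verify that the base-changed object satisfies the strict support and cosupport conditions that pin down an intersection cohomology complex, and thereby identify it with the expected $\IC$-sheaf; the parity hypothesis will enter only to kill the $\mathrm{Tor}$-terms that appear when tensoring with $\F$.

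First I would dispose of $\K\otimes\IC^\O_\l=\IC^\K_\l$, for which no parity assumption is needed. The map $\O\to\K$ is flat, so $\K\otimes\_$ is exact and commutes with passage to cohomology sheaves, and by the stated compatibility of the six functors with extension of scalars it also commutes with $l^*_\mu$ and $l^!_\mu$. Recall that $\IC^\O_\l$ is characterized inside $\D^b_\Lambda(X,\O)$ by $l^*_\l\IC^\O_\l=\O[d_\l]$ together with the strict conditions $\cal H^k(l^*_\mu\IC^\O_\l)=0$ for $k\geq -d_\mu$ and $\cal H^k(l^!_\mu\IC^\O_\l)=0$ for $k\leq -d_\mu$, for every $\mu\neq\l$. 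Pushing $\K\otimes\_$ through $l^*_\mu$, $l^!_\mu$ and $\cal H^k$, one sees that $\K\otimes\IC^\O_\l$ satisfies the analogous conditions over $\K$, hence equals $\IC^\K_\l$.

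Next I would treat $\F\otimes\IC^\O_\l=\IC^\F_\l$ under the parity hypothesis. By definition, if $\IC^\O_\l$ is parity then for each $\mu$ the complexes $l^*_\mu\IC^\O_\l$ and $l^!_\mu\IC^\O_\l$ have free constant cohomology sheaves, vanishing in degrees of one fixed parity. When a complex has free cohomology sheaves, the universal coefficient spectral sequence $\mathrm{Tor}^\O_{-i}(\F,\cal H^j(\_))\Rightarrow\cal H^{i+j}(\F\otimes\_)$ collapses, so $\cal H^k(\F\otimes l^*_\mu\IC^\O_\l)=\F\otimes\cal H^k(l^*_\mu\IC^\O_\l)$ and likewise for $l^!_\mu$. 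Consequently $l^*_\l(\F\otimes\IC^\O_\l)=\F[d_\l]$ and the vanishing ranges of the (co)stalks at $\mu\neq\l$ are inherited verbatim; so $\F\otimes\IC^\O_\l$ satisfies the strict support and cosupport conditions over $\F$ and must be $\IC^\F_\l$. Finally, since $\F\otimes(\_)$ of a constant sheaf is constant, is torsion free over the field $\F$, and respects the degree-parity of the vanishing, the object $\F\otimes\IC^\O_\l=\IC^\F_\l$ is again parity.

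The only subtle point, and the reason the second statement is conditional while the first is not, is precisely this $\mathrm{Tor}$-bookkeeping for $\F\otimes\_$ against $l^!_\mu$ and $\cal H^k$: without the freeness supplied by parity, base change to $\F$ could produce cohomology in the degrees the support conditions forbid, so $\F\otimes\IC^\O_\l$ need not be an $\IC$-sheaf at all. Everything else is routine.
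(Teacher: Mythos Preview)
Your proof is correct and follows essentially the same approach as the paper's: both arguments identify $\F\otimes\IC^\O_\l$ with $\IC^\F_\l$ by observing that the freeness of (co)stalks supplied by parity forces the (co)stalk vanishing conditions to survive base change. You have simply unpacked the paper's one-line argument, making explicit the role of the $\mathrm{Tor}$-terms and the universal coefficient argument, and also spelled out the (easier, flatness-based) argument for $\K$ which the paper leaves implicit.
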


\begin{lem}\label{LemExtGrpsTheSame}
 Let $X$ be a cell stratified variety such that $\IC_\l^\O,\IC_\mu^\O$ are parity. Then for all $i$ the module
$$\Ext^i(\IC_\l^\O,\IC_\mu^\O)$$
is free over $\O$. We obtain in particular 
$$\Ext^i(\IC_\l^\F,\IC_\mu^\F)=\F \otimes \Ext^i(\IC_\l^\O,\IC_\mu^\O) \text{ and } \Ext^i(\IC_\l^\K,\IC_\mu^\K)=\K \otimes \Ext^i(\IC_\l^\O,\IC_\mu^\O)$$
\begin{proof}
\cite[2.6]{JMWparity}
\end{proof}
\end{lem}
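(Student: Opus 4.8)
The plan is to deduce the freeness statement from the standard fact that morphism spaces between parity complexes are free over $\O$, and then to read off the two base-change identities by a universal-coefficient argument, using the compatibility of the six functors with extension of scalars recorded in Section 1.

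For the freeness of $\Ext^i(\IC^\O_\l,\IC^\O_\mu)$ I would argue by induction on the number of strata. Decomposing $\IC^\O_\l$ and $\IC^\O_\mu$ into their even and odd summands and shifting the odd ones by $[1]$, it is enough to treat $\RHom(\cal E,\cal F)$ for $\cal E$ a $*$-even and $\cal F$ a $!$-even object on a cell stratified variety $Y$ (for instance $Y=\ol X_\mu$), and to show that $\bigoplus_n\Hom(\cal E,\cal F[n])$ is free over $\O$ and concentrated in even degrees. Pick a stratum $X_\nu\cong\A^{d_\nu}$ open in $Y$, with open inclusion $j:X_\nu\inj Y$ and closed complement $i:Z\inj Y$, and apply $\RHom(\cal E,-)$ to the distinguished triangle $i_*i^!\cal F\to\cal F\to j_*j^*\cal F\xrightarrow{+1}$. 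By adjunction the outer terms become $\RHom_Z(i^*\cal E,i^!\cal F)$ and $\RHom_{X_\nu}(j^*\cal E,j^*\cal F)$; the object $i^*\cal E$ is again $*$-even and $i^!\cal F$ again $!$-even on the smaller variety $Z$, so the first term is free and even by induction, while on the single cell $X_\nu$ the complexes $j^*\cal E$ and $j^*\cal F=j^!\cal F$ are direct sums of even shifts of the constant sheaf, and $\H^\bullet(\A^{d_\nu},\O)=\O$ makes the second term free and even as well. In the resulting long exact sequence the connecting maps always have source or target in an odd degree and hence vanish, so the sequence splits into short exact sequences with free outer terms; thus each $\Hom(\cal E,\cal F[n])$ is free. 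Applying this to $\cal E=(\ol l_\mu)^*\IC^\O_\l$ and $\cal F=\IC^\O_\mu$ on $Y=\ol X_\mu$ proves that $\Ext^i(\IC^\O_\l,\IC^\O_\mu)$ is free over $\O$ for all $i$.

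For the base-change identities I would invoke the universal-coefficient isomorphism $\F\otimes^L\RHom_\O(\cal A,\cal B)\cong\RHom_\F(\F\otimes^L\cal A,\F\otimes^L\cal B)$, which follows from the compatibility of $\cal Hom$ and of pushforward to a point with extension of scalars from Section 1. Taking $\cal A=\IC^\O_\l$ and $\cal B=\IC^\O_\mu$ and using the preceding lemma that $\F\otimes\IC^\O_\nu=\IC^\F_\nu$, the right-hand side becomes $\RHom_\F(\IC^\F_\l,\IC^\F_\mu)$. Since the freeness established above means the complex $\RHom_\O(\IC^\O_\l,\IC^\O_\mu)$ has free cohomology in every degree, no $\mathrm{Tor}$-terms appear when applying $\F\otimes^L$, so $H^i$ yields $\Ext^i(\IC^\F_\l,\IC^\F_\mu)=\F\otimes\Ext^i(\IC^\O_\l,\IC^\O_\mu)$. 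The identity over $\K$ follows in the same way and is even simpler, since $\K$ is flat over $\O$.

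The step I expect to be the main obstacle is the bookkeeping in the induction of the second paragraph: one must verify carefully that the $*$-even and $!$-even conditions really are inherited by $i^*\cal E$, $i^!\cal F$ and the open restrictions, and that the parity count forcing the connecting maps to vanish is correct in every case. This is precisely the content of \cite[2.6]{JMWparity}, which I would cite rather than reprove in full.
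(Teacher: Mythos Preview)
Your argument is correct and is precisely the proof of \cite[2.6]{JMWparity} that the paper cites: the open--closed induction on strata, the parity count killing the connecting maps, and freeness of the cell-level Hom. The base-change identities you derive from universal coefficients are also correct and match the general six-functor compatibility recorded in Section~1; the only cosmetic point is that the passage to $Y=\ol X_\mu$ is unnecessary --- you may run the induction directly on $X$, since $\IC^\O_\mu$ is already supported there and the argument only requires $\cal E$ to be $*$-even and $\cal F$ to be $!$-even on the ambient cell-stratified variety.
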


% 
% \include{referenzen.bib}
% 
% 
% \bibliographystyle{alpha}
% 
% 
% \bibliography{referenzen.bib}
% 
% \end{document}
 % \include{Preamble}
% \begin{document}

\section{$\O$-Koszul algebras}
Recall that a non-positively\footnote{Actually most authors consider non-negatively graded rings $A=\bigoplus A^i$
with $\Ext^i$ in degree $i$.} graded ring $A=\bigoplus A_i$ such that 
$A_0$ is semisimple is called Koszul if for all $i\in \N$ the graded module $\Ext^i(A_0,A_0)$ is concentrated in degree $-i$.

Suppose now that $A^\F=A\otimes \F$ is an $\F$-algebra, which admits an ``integral form'' $A$ over $\O$
such that $A\otimes \K$ is Koszul.
Is it true that $A^\F$ is Koszul as well? The answer is positive, if $A$ satisfies some reasonable assumptions.

\begin{setup}\label{setupGraded}
 Let $A$ be a right noetherian non-positively graded $\E$-algebra, which is free as an $\E$-module. Suppose that
$$A_0=\prod_{\l \in \Lambda} \E$$
as an $\E$-algebra for some finite index set $\Lambda$.
Denote by $e_{\l}$ the idempotent corresponding to the $\l$-th copy of $\E$.
Let us define the following graded right modules:
$$
\begin{xy}
 \xymatrix{P_\l:=e_\l A & P:=\underset{\l \in \Lambda}{\bigoplus} P_\l \\
L_\l:=P_\l / P_\l^{<0}  & L:=\underset{\l \in \Lambda}{\bigoplus} L_\l
}
\end{xy}
$$
Here $M^{<0}:=\bigoplus_{i<0} M_i$ for a graded module $M$.
We think of $L_\l$ as simple modules and $P_\l$ as their projective covers.
%Suppose for simplicity that $A$ has finite cohomological dimension.
\end{setup}

\begin{rem}
 If $\E=\F,\K$ is a field, then the $L_\l$ are precisely the simple graded modules. 
 If $\E=\O$, then the $L_\l$ are not simple objects in the abelian category of graded $A$-modules.
 
 However they are still simple in less naive sense: 
 Let $\cal A$ be an exact category. We say that an object $M\in \cal A$ is simple if 
 it is non-zero and its only admissible quotients are $M$ and $0$.
 Now the $L_\l$ are indeed the simple objects in the exact category of graded $A$-modules, which are free over $\O$.
\end{rem}

For two graded modules $M,N$ over a graded ring there is a grading on $\Hom(M,N)$,
the space of module homomorphisms. We will call it ``internal grading'', 
as well as its cousin on $\Ext^i(M,N)$.

Given a (graded) ring $R$, we denote by $R\-\mathrm{(gr)Mod}$ its category of (graded) left modules.
We also make use of variants of this notation. For instance if $R$ is a graded right noetherian ring,
we denote by $\grmod\-R$ the category of finitely generated graded right modules over $R$.

\begin{lem}\label{lemExtensionOfScalars}
 Let $A$ be a graded right noetherian $\O$-algebra, which is free as an $\O$-module. Let $M,N \in \D^b(\grmod\-A)$
be two objects of the bounded derived category. Then there are canonical isomorphisms:
\begin{itemize}
\item $\F \overset{L}{\otimes} \RHom_A(M,N) = \RHom_{A\otimes \F}(\F\overset{L}{\otimes} M, \F\overset{L}{\otimes} N)$
\item $\K \overset{L}{\otimes} \RHom_A(M,N) = \RHom_{A\otimes \K}(\K\overset{L}{\otimes} M, \K\overset{L}{\otimes} N)$
\end{itemize}
\begin{proof}
 Let us focus on the $\F$-case. Observe that for $M,N$ graded free of finite rank, on the one hand 
$\RHom_A(M,N)$ is free over $\O$. On the other hand $\F \overset{L}{\otimes} M$ and $\F\overset{L}{\otimes} N$ 
are graded free modules again.
This shows that the functors on both sides can be computed in the naive way on complexes of graded free modules.
Hence one can  write down a natural isomorphism $\F \overset{L}{\otimes} \RHom_A(M,N) \rightarrow \RHom_{A\otimes \F}(\F\overset{L}{\otimes} M, \F\overset{L}{\otimes} N)$
for such complexes in the naive way. 

%This transformation is obviously an isomorphism for $M=N=A$. However we have 
%$\langle A \rangle = \D^b(A\-grmod)$ by finite cohomological dimension. Hence it is an isomorphism everywhere.
\end{proof}
\end{lem}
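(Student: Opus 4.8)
The plan is to reduce everything to one concretely chosen projective resolution and then compute both sides by hand, keeping track of the internal grading throughout (which plays no role and I suppress). Using that $A$ is right noetherian and $M\in\D^b(\grmod\-A)$, first choose a quasi-isomorphism $P^\bullet\rar M$ where $P^\bullet$ is a bounded-above complex of finitely generated graded free $A$-modules; such a complex is K-projective over $A$. Since $A$ is free over $\O$, each $P^i$ is free, hence $\O$-flat, so $\ol P^\bullet:=P^\bullet\otimes_\O\F=P^\bullet/\varpi P^\bullet$ computes $\F\overset{L}{\otimes}_\O M$, is a bounded-above complex of finitely generated graded free $A\otimes\F$-modules, and is therefore K-projective over $A\otimes\F$. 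I would deliberately \emph{not} resolve $N$: keeping $N$ a bounded complex forces $\Hom^\bullet_A(P^\bullet,N)$ to be, in each cohomological degree, a \emph{finite} direct sum of modules $\Hom_A(A(n),N^j)=N^j(-n)$, which matters because $P^\bullet$ is typically unbounded below. With these choices, $\RHom_A(M,N)\cong\Hom^\bullet_A(P^\bullet,N)$ and $\RHom_{A\otimes\F}(\F\overset{L}{\otimes}_\O M,\F\overset{L}{\otimes}_\O N)\cong\Hom^\bullet_{A\otimes\F}(\ol P^\bullet,\F\overset{L}{\otimes}_\O N)$, where $\F\overset{L}{\otimes}_\O N$ is a bounded complex of $A\otimes\F$-modules.

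Next I would produce the comparison isomorphism by chaining three elementary identifications. Resolving $\F$ over the discrete valuation ring $\O$ via $0\rar\O\overset{\varpi}{\rar}\O\rar\F\rar0$ shows that $\F\overset{L}{\otimes}_\O C\cong\mathrm{Cone}(C\overset{\varpi}{\rar}C)[-1]$ for any complex $C$ of $\O$-modules. Applying this to $C=\Hom^\bullet_A(P^\bullet,N)$ and using that $\Hom^\bullet_A(P^\bullet,-)$ is an additive dg-functor, hence commutes with shifts and mapping cones, gives
\[
\F\overset{L}{\otimes}_\O\RHom_A(M,N)\;\cong\;\Hom^\bullet_A\big(P^\bullet,\ \mathrm{Cone}(N\overset{\varpi}{\rar}N)[-1]\big)\;\cong\;\Hom^\bullet_A(P^\bullet,\F\overset{L}{\otimes}_\O N).
\]
Then the extension-of-scalars/restriction-of-scalars adjunction for $A\rar A\otimes\F$, which on Hom-complexes reads $\Hom^\bullet_A(P^\bullet,Q)\cong\Hom^\bullet_{A\otimes\F}(\ol P^\bullet,Q)$ for any complex $Q$ of $A\otimes\F$-modules, applied to $Q=\F\overset{L}{\otimes}_\O N$, identifies the right-hand side with $\Hom^\bullet_{A\otimes\F}(\ol P^\bullet,\F\overset{L}{\otimes}_\O N)\cong\RHom_{A\otimes\F}(\F\overset{L}{\otimes}_\O M,\F\overset{L}{\otimes}_\O N)$. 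All three steps are canonical, and one checks that their composite is the standard base-change morphism, so the resulting isomorphism is natural in $M$ and $N$.

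The $\K$-case is strictly easier, and I would handle it separately: since $\K$ is flat over $\O$, the functor $\K\overset{L}{\otimes}_\O-$ equals the exact functor $\K\otimes_\O-$, which commutes both with the finite direct sums occurring in $\Hom^\bullet_A(P^\bullet,N)$ (finite because $N$ is bounded) and with $\Hom_A(A(n),-)$. Hence $\K\otimes_\O\Hom^\bullet_A(P^\bullet,N)\cong\Hom^\bullet_{A\otimes\K}(\K\otimes_\O P^\bullet,\K\otimes_\O N)$, and the latter computes $\RHom_{A\otimes\K}(\K\overset{L}{\otimes}_\O M,\K\overset{L}{\otimes}_\O N)$ since $\K\otimes_\O P^\bullet$ is again a bounded-above complex of graded free $A\otimes\K$-modules.

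The only genuine obstacle is that $N$ need not be flat over $\O$, so $\F\overset{L}{\otimes}_\O\RHom_A(M,N)$ is a truly derived object and cannot be computed by naively tensoring the $\O$-module $\Hom^\bullet_A(P^\bullet,N)$; resolving $\F$ over $\O$ — equivalently, writing $\F\overset{L}{\otimes}_\O(-)$ as a cone of $\varpi$ and pulling it through $\Hom^\bullet_A(P^\bullet,-)$ — is precisely what repairs this. The secondary bookkeeping point is that the resolution $P^\bullet$ of $M$ is generally unbounded below, which is harmless only because $N$ is kept bounded, so that the Hom-complexes are termwise finite direct sums and tensor products commute with their formation. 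Neither point is deep, but both must be addressed to cover arbitrary $M,N\in\D^b(\grmod\-A)$.
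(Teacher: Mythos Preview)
Your argument is correct. The paper's proof takes a more compressed route: it replaces both $M$ and $N$ by complexes of finitely generated graded free $A$-modules and observes that, for such modules, every derived functor in sight is computed by its underived version (since $\Hom_A(P^i,Q^j)$ is $\O$-free and $\F\otimes P^i$, $\F\otimes Q^j$ are graded free over $A\otimes\F$), so the termwise isomorphism $\F\otimes\Hom_A(P^i,Q^j)\cong\Hom_{A\otimes\F}(\ol P^i,\ol Q^j)$ assembles into the desired map. Your approach is genuinely different: you resolve only $M$, keep $N$ bounded, push $\F\overset{L}{\otimes}_\O(-)$ through $\Hom^\bullet_A(P^\bullet,-)$ via the cone-of-$\varpi$ description, and finish with the extension/restriction adjunction. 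What this buys you is a clean handling of the boundedness issue---when both resolutions are only bounded above, the Hom-complex $\Hom^\bullet_A(P^\bullet,Q^\bullet)$ involves infinite products, a point the paper's sketch leaves implicit---whereas the paper's version is shorter and makes the termwise character of the isomorphism transparent.

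One cosmetic remark: with the usual cone convention the triangle $\O\overset{\varpi}{\to}\O\to\F\to\O[1]$ gives $\F\overset{L}{\otimes}_\O C\cong\mathrm{Cone}(C\overset{\varpi}{\to}C)$ without the shift $[-1]$. This has no effect on the rest of your argument.
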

Let us consider the ext-algebra $E:=\Ext^\bullet(L,L)$. It is equipped with two gradings, the cohomological and the internal one.

\begin{defi}
 Let $A$ be as in \ref{setupGraded}. Assume that $E$ is free as an $\E$-module.
We say that $A$ is $\E$-Koszul, if the internal grading on $E$ is the 
negative of the cohomological grading on $E$. In other words, we demand that $\Ext^i(A_0,A_0)$ is concentrated in internal degree $-i$.
\end{defi}
If $\E$ is a field, than $A$ is $\E$-Koszul, if and only if it is Koszul in the usual sense (after inverting the sign of the grading).

\begin{thm}\label{thmOKoszul}
Let $A$ be an $\O$-algebra as in \ref{setupGraded} and suppose that $\Ext^\bullet(L,L)$ is free over $\O$. Then the following are equivalent:
\begin{itemize}
\item $A$ is $\O$-Koszul.
\item $A\otimes \F$ is $\F$-Koszul.
\item $A\otimes \K$ is $\K$-Koszul.
\end{itemize}
\begin{proof}
By \ref{lemExtensionOfScalars} we have $\Ext^i(\F\otimes L,\F\otimes L)=\F\otimes \Ext^i(L,L)$ and $\Ext^i(\K\otimes L,\K\otimes L)=\K\otimes \Ext^i(L,L)$.
Now it remains to observe that for a free graded $\O$-module the property of being concentrated in degree $-i$
is reflected and conserved by tensoring with $\F,\K$.
\end{proof}
\end{thm}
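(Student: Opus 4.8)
The plan is to reduce everything to the behaviour of a single free graded $\O$-module under base change. By Lemma~\ref{lemExtensionOfScalars} applied to $M=N=L$ (viewed in $\D^b(\grmod\-A)$), together with the hypothesis that $\Ext^\bullet(L,L)$ is free over $\O$, we obtain for every $i$ canonical isomorphisms of bigraded $\O$-modules
$$\Ext^i(\F\otimes L,\F\otimes L)=\F\otimes \Ext^i(L,L),\qquad \Ext^i(\K\otimes L,\K\otimes L)=\K\otimes \Ext^i(L,L).$$
Here I must first check that $\F\otimes L$ and $\K\otimes L$ are indeed the modules called $L$ for the algebras $A\otimes\F$ and $A\otimes\K$ in the sense of Setup~\ref{setupGraded}: since $A$ is free over $\O$ and $A_0=\prod_\Lambda\O$, base change gives $(A\otimes\F)_0=\prod_\Lambda\F$ with the images of the same idempotents $e_\l$, and $P_\l\otimes\F=(e_\l A)\otimes\F=e_\l(A\otimes\F)$, so $L_\l\otimes\F=(P_\l/P_\l^{<0})\otimes\F=(P_\l\otimes\F)/(P_\l\otimes\F)^{<0}$ because tensoring is right exact and respects the grading; likewise over $\K$. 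So the three ext-algebras in question are precisely $\Ext^\bullet(L,L)$ and its base changes.

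With that identification in hand, the statement becomes purely about a bigraded free $\O$-module $E^i:=\Ext^i(L,L)$, graded by cohomological degree $i$ and internal degree $j$. The algebra $A$ is $\O$-Koszul precisely when $E^i$ is concentrated in internal degree $-i$, i.e.\ $E^i_j=0$ for $j\neq -i$; and $A\otimes\F$ (resp.\ $A\otimes\K$) is $\F$-Koszul (resp.\ $\K$-Koszul) precisely when $E^i\otimes\F$ (resp.\ $E^i\otimes\K$) is concentrated in internal degree $-i$. So I just need: for a finitely generated free graded $\O$-module $V=\bigoplus_j V_j$, one has $V_j=0$ if and only if $V_j\otimes\F=0$, if and only if $V_j\otimes\K=0$. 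This is immediate: each $V_j$ is free of some finite rank $r_j$, and $\dim_\F(V_j\otimes\F)=r_j=\dim_\K(V_j\otimes\K)$, so all three conditions say $r_j=0$. Running this over all $j\neq -i$ and all $i$ finishes the equivalence.

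I do not expect a serious obstacle here; the content of the theorem is entirely front-loaded into Lemma~\ref{lemExtensionOfScalars} and the freeness hypothesis on $\Ext^\bullet(L,L)$, both of which I am allowed to assume. The one point that deserves a sentence of care is the compatibility of base change with the \emph{internal} grading — Lemma~\ref{lemExtensionOfScalars} is stated for graded modules and graded $\RHom$, so its isomorphisms are isomorphisms of graded objects, hence respect the internal grading, and the cohomological grading is respected because base change commutes with the truncation functors computing $\Ext^i$. Granting this, the proof is the three-line argument already sketched in the excerpt, and the writeup above simply spells out why $\F\otimes L$ and $\K\otimes L$ play the role of the distinguished module $L$ downstairs and why "concentrated in a single internal degree" is detected by base change to a field.
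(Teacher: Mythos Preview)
Your proof is correct and follows exactly the same route as the paper: invoke Lemma~\ref{lemExtensionOfScalars} to identify $\Ext^i$ after base change, then observe that a free graded $\O$-module is concentrated in degree $-i$ if and only if its base change to $\F$ or $\K$ is. You simply spell out two points the paper leaves implicit---that $\E\otimes L$ is the distinguished module for $A\otimes\E$, and that freeness of $\Ext^\bullet(L,L)$ is what turns the derived isomorphism of Lemma~\ref{lemExtensionOfScalars} into a non-derived one on each $\Ext^i$---both of which are unproblematic.
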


% \begin{defi}
% Let $A$ be as in \ref{setupGraded}. Then we define $A^!$ as a ring by the equation $A^!:=\Ext^\bullet(A_0,A_0)^{op}$.
% It is equipped with a grading given by $A^!_i:=\Ext^{-i}(A_0,A_0)$.
% If $A$ is Koszul, $A^!$ is called the Koszul dual of $A$.
% \end{defi}
% We will use the notation $A^!$ only if $A$ is Koszul or soon to be shown Koszul.

\begin{rem}
 Let $A$ be $\E$-Koszul, say of finite cohomological dimension. Using standard techniques \cite[Chapter 2]{OlafThesis}, one can show that 
 there is an equivalence of categories 
 $$\mod \-A \cong \per \-\Ext^\bullet(L)$$
 Here we consider $\Ext^\bullet(L):=(\Ext^\bullet(L,L),0 )$ as a dg-algebra with trivial differential and 
 $\per \-\Ext^\bullet(L)$ denotes the perfect derived category. In other words $\per \-\Ext^\bullet(L)$ 
 is the smallest full idempotent complete triangulated subcategory of the derived category of dg-modules over $\Ext^\bullet(L)$ which contains
 $\Ext^\bullet(L)$.
\end{rem}

\section{Projectives in $\P_{\Lambda}(X,\E)$}

\subsection{Multiplicities}
We start with some general remarks on multiplicities in abelian categories.

Let $\cal A$ be an abelian category linear over a field $k$. Then we are interested in its Grothendieck group $K(\cal A)$.
For any basis $(L_i)$ of $K(\cal A)$ and an element $M\in K(\cal A)$ we define the multiplicity of 
$L_i$ in $M$ by the formula 
$$M=\sum [M:L_i] \cdot L_i$$
Assume that $\cal A$ is also of finite cohomological dimension and with finite dimensional
morphism spaces. Then we have a pairing $K(\cal A) \times K(\cal A) \rightarrow \Z$
induced by taking the Euler characteristic of the $\Hom$-complex:
$$h([x],[y]):=\chi_{\RHom(x,y)}:= \sum (-1)^i \dim  \Ext^i(x,y) \text{ for } x,y\in \cal A$$

In this setting we have BGG-reciprocity:
\begin{lem}\label{lemBGG}
Let $(P_i),(L_j)$ and $(\Delta_i),(\nabla_j)$ be two collections of dual bases of $K(\cal A)$, in the sense that
$$h(P_i,L_j)=h(\Delta_i,\nabla_j)=\delta_{ij}$$
Then we have 
$$[P_i:\Delta_j]=[\nabla_j:L_i]$$
\begin{proof}
For any $M,N$ we have $[M:L_i]=h(P_i,M)$ and $h(M,\nabla_j)=[M:\Delta_j]$.
Hence we may compute:
$$[\nabla_j:L_i]=h(P_i,\nabla_j)=[P_i:\Delta_j]$$
\end{proof}
\end{lem}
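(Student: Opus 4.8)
The plan is to follow the classical BGG argument: rewrite the two multiplicity functions $[-:L_i]$ and $[-:\Delta_j]$ in terms of the single bilinear pairing $h$, and then play the two duality relations against each other on suitable test objects.

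First I would record that $h$ is a well-defined $\Z$-bilinear pairing on $K(\cal A)\times K(\cal A)$. Because $\cal A$ has finite cohomological dimension and finite-dimensional morphism spaces, $\chi_{\RHom(x,y)}=\sum_i(-1)^i\dim\Ext^i(x,y)$ is a finite integer for all $x,y\in\cal A$, and the long exact sequence in $\Ext$ attached to a short exact sequence in either variable shows that $\chi_{\RHom(-,-)}$ is additive on short exact sequences in each slot. Hence it factors through the Grothendieck group and is bilinear there. (This is already implicit in writing expressions like $h(P_i,L_j)$.)

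Next I would derive the two formulas quoted in the statement sketch. Since $(L_i)$ is a basis of $K(\cal A)$, any $M\in K(\cal A)$ expands as $M=\sum_i [M:L_i]\,L_i$; pairing on the left with $P_j$ and using $h(P_j,L_i)=\delta_{ji}$ gives $h(P_j,M)=[M:L_j]$. Symmetrically, expanding $M=\sum_i[M:\Delta_i]\,\Delta_i$ in the basis $(\Delta_i)$ and pairing on the right with $\nabla_j$, using $h(\Delta_i,\nabla_j)=\delta_{ij}$, gives $h(M,\nabla_j)=[M:\Delta_j]$. Finally I would specialize: taking $M=\nabla_j$ in the first identity yields $[\nabla_j:L_i]=h(P_i,\nabla_j)$, while taking $M=P_i$ in the second yields $[P_i:\Delta_j]=h(P_i,\nabla_j)$; comparing the two gives $[P_i:\Delta_j]=[\nabla_j:L_i]$.

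I do not anticipate a genuine obstacle here; the argument is pure bookkeeping with dual bases once the pairing is in place. The only point that deserves a sentence of care is precisely the well-definedness and bilinearity of $h$ on $K(\cal A)$, which is where the hypotheses of finite cohomological dimension and finite-dimensional $\Hom$-spaces are used.
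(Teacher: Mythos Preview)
Your proposal is correct and follows exactly the same approach as the paper's proof: derive the identities $[M:L_i]=h(P_i,M)$ and $h(M,\nabla_j)=[M:\Delta_j]$ from the dual basis relations, then specialize to $M=\nabla_j$ and $M=P_i$ respectively. You simply add a sentence justifying bilinearity of $h$, which the paper leaves implicit.
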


\subsection{Projectives in $\P_{\Lambda}(X,\E)$}
In \cite{RSW} it was shown, that there always exist indecomposable projectives and maps $P_\l^\O\surj \IC_\l^\O$ in
the category $\P_{\Lambda}(X,\O)$. However in this generality $P_\l^\O$ is not very explicit. 
For example there is no description of $\Hom(P_\l^\O,\IC_\mu^\O)$! The situation is much more transparent if 
the $\IC$-sheaves are parity:
\begin{prop}\label{ThmProjX}
Let $X$ be an acyclically stratified variety which satisfies $\IC^\O$-parity.
Let $X_\l$ be any stratum.
\begin{enumerate}
\item Then there exists a projective object $P_\l^\E \in \P_{\Lambda}(X,\E)$ such that 
$$\Hom(P_\l^\E,\IC_\mu^\E)\cong\E^{\delta_{\l,\mu}}$$
In particular $P_\l^\E$ is indecomposable.
\item We have 
$$P_\l^\F = \F \otimes P_\l^\O \text{ and } P_\l^\K = \K \otimes P_\l^\O  $$
\end{enumerate}
\end{prop}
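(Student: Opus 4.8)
\emph{Strategy.} Everything is bootstrapped from the $\O$-picture. By \cite{RSW} there is an indecomposable projective $P_\l^\O\in\P_\Lambda(X,\O)$ together with a surjection $P_\l^\O\surj\IC_\l^\O$; as an indecomposable object of the $\Hom$-finite category $\P_\Lambda(X,\O)$ over the complete local ring $\O$ its endomorphism ring $\End(P_\l^\O)$ is local, and (carrying a filtration by standard objects $\Delta_\mu^\O$, see \cite{RSW}) $P_\l^\O$ is torsion free over $\O$. I would then simply \emph{define} $P_\l^\F:=\F\overset{L}{\otimes}P_\l^\O$ and $P_\l^\K:=\K\overset{L}{\otimes}P_\l^\O$, so that $(2)$ holds by construction, and devote the proof to verifying $(1)$ for $\E=\O,\F,\K$. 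The first consequence of torsion freeness is that $\F\overset{L}{\otimes}P_\l^\O$ sits in perverse degree $0$, i.e. $P_\l^\F$ is a genuine perverse sheaf (and $P_\l^\K$ is one, $\K$ being flat).

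\emph{The base-change computation.} Fix a stratum $\mu$. Projectivity of $P_\l^\O$ together with the realization equivalence \ref{thmRealEquiv} show that $\RHom_{\D^b_\Lambda(X,\O)}(P_\l^\O,\IC_\mu^\O)$ is concentrated in degree $0$, equal to $M_\mu:=\Hom(P_\l^\O,\IC_\mu^\O)$. This is where $\IC^\O$-parity enters: it gives $\F\overset{L}{\otimes}\IC_\mu^\O=\IC_\mu^\F$, so that compatibility of $\RHom$ with extension of scalars (part of the six-functor formalism, compare \ref{lemExtensionOfScalars}) yields
$$\RHom_{\D^b_\Lambda(X,\F)}(P_\l^\F,\IC_\mu^\F)\;\cong\;\F\overset{L}{\otimes}M_\mu.$$
The right-hand side has cohomology only in degrees $-1$ and $0$, the $(-1)$-part being $\mathrm{Tor}_1^\O(\F,M_\mu)$. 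But $P_\l^\F$ and $\IC_\mu^\F$ are perverse, so no negative $\Ext$-groups occur between them; hence $\mathrm{Tor}_1^\O(\F,M_\mu)=0$, i.e. $M_\mu$ is $\O$-free, and the displayed complex lives in degree $0$ alone. Therefore $\Ext^{>0}(P_\l^\F,\IC_\mu^\F)=0$ for every $\mu$, and since $\P_\Lambda(X,\F)$ is of finite length with simple objects exactly the $\IC_\mu^\F$, a d\'evissage over the finitely many simples upgrades this to $\Ext^{>0}(P_\l^\F,-)=0$: thus $P_\l^\F$ is projective.

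\emph{Identification and conclusion.} Torsion freeness also gives $\End(P_\l^\F)=\F\otimes\End(P_\l^\O)$ (the same base-change reasoning, $\End(P_\l^\O)$ being $\O$-free as $\Hom$ into the torsion free $P_\l^\O$), a quotient of a local ring and hence local, so $P_\l^\F$ is indecomposable; applying the right exact functor ${}^{p}H^{0}(\F\overset{L}{\otimes}-)$ to $P_\l^\O\surj\IC_\l^\O$ (again using $\F\overset{L}{\otimes}\IC_\l^\O=\IC_\l^\F$) shows $P_\l^\F\surj\IC_\l^\F$. An indecomposable projective surjecting onto the simple $\IC_\l^\F$ is the projective cover of $\IC_\l^\F$, so $\Hom(P_\l^\F,\IC_\mu^\F)\cong\F^{\delta_{\l\mu}}$; combined with $\O$-freeness of $M_\mu$ this forces $M_\mu\cong\O^{\delta_{\l\mu}}$, proving $(1)$ for $\E=\O$. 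For $\E=\K$ the module $M_\mu$ is by now known to be free, so base change is immediate: $\RHom(P_\l^\K,\IC_\mu^\K)\cong\K\overset{L}{\otimes}M_\mu$ equals $\K^{\delta_{\l\mu}}$ in degree $0$, hence $P_\l^\K$ is projective with $\Hom(P_\l^\K,\IC_\mu^\K)\cong\K^{\delta_{\l\mu}}$ and is therefore the indecomposable projective cover of $\IC_\l^\K$. I expect the only real subtlety to be the bookkeeping at the junction of $\F\overset{L}{\otimes}-$, the perverse $t$-structure and $\O$-torsion: one genuinely needs $P_\l^\F$ to be perverse in order to read the base change as an $\RHom$ inside $\P_\Lambda(X,\F)$ and to extract freeness of $M_\mu$ from the vanishing of a negative $\Ext$, and it is precisely $\IC^\O$-parity that makes $\F\overset{L}{\otimes}\IC_\mu^\O$ the honest simple $\IC_\mu^\F$ with no spurious lower terms — drop that hypothesis and the comparison collapses.
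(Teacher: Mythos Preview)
Your argument is correct and is a genuinely different route from the paper's. The paper does not start from the abstract existence of $P_\l^\O$ in \cite{RSW} and then base-change; instead it \emph{constructs} each $P_\l^\E$ simultaneously for $\E=\F,\O,\K$ by recursion on the number of strata, via the universal extension
\[
0 \longrightarrow (E^\E)^* \otimes \Delta_\mu^\E \longrightarrow P_\l^\E \longrightarrow P_{Y,\l}^\E \longrightarrow 0,
\qquad E^\E:=\Ext^1(P_{Y,\l}^\E,\Delta_\mu^\E),
\]
where $X_\mu$ is an open stratum with closed complement $Y$. The technical crux there is to show that $E^\O$ is $\O$-free, which the paper proves by a multiplicity comparison: one computes $\dim E^\F$ and $\dim E^\K$ via BGG reciprocity $[P_\l:\Delta_\mu]=[\nabla_\mu:\IC_\l]$ and then invokes $\IC^\O$-parity to match $[\nabla_\mu^\F:\IC_\l^\F]=[\nabla_\mu^\K:\IC_\l^\K]$. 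Your approach sidesteps this entirely: you read off freeness of $\Hom(P_\l^\O,\IC_\mu^\O)$ directly from $\F\overset{L}{\otimes}\IC_\mu^\O=\IC_\mu^\F$ and the vanishing of negative Ext between perverse sheaves, then propagate everything by base change.

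Your proof is cleaner for the proposition as stated. What the paper's explicit recursion buys, and what you would have to redo, is the later lifting of $P_\l^\E$ to $X_0$ (Constructions~\ref{constrprojQlss} and~\ref{constrproj0}) and the comparison $\K\otimes P_{\l,0}^\O\cong P_\l^{\BGS}$: those arguments hinge on the inductive extension description, and the abstract $P_\l^\O$ from \cite{RSW} does not come packaged with it. So your shortcut is fine here but does not replace the construction downstream.
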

 The explicit construction of the projectives in \ref{ThmProjX} will be important. It is a variant of \cite[2.3.1]{RSW}.
We will give it in a moment and justify it (e.g. show that it indeed gives projectives etc.) afterwards.
\begin{construction}\label{constrproj}
The construction goes by recursion on the number of strata of $X$. If $X= \emptyset$, there is nothing to do.
Now assume that $X$ consists of $n$ strata and we already constructed projectives for smaller varieties.
\begin{itemize}
 \item If $X_\l$ is an open stratum, we put $P_\l^\E:=\Delta^\E_\l$ and see by adjunction that $P_\l^\E$ is projective: 
Indeed $\Hom(P_\l^\E,\_)=\Hom(\E[d_\l],l_\l^! \_)$ is exact since $l_\l$ is an open inclusion.
This description also shows $\Hom(P_\l^\E,\IC_\mu^\E)=\E^{\delta_{\l,\mu}}$.

\item If $X_\l$ is not open, let $U:=X_\mu$ be an open stratum and $Y$ be its complement.
By assumption we have already constructed a projective $P_{Y,\l}^\E$ with the desired properties on $Y$. Let 
$$E:=E^\E:=\Ext^1(P_{Y,\l}^\E,\Delta^\E_\mu)$$
\end{itemize}
\begin{claim}\label{ClaimEoFree}
 $E^\E$ is free over $\E$.
\end{claim}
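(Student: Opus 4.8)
The plan is as follows. Modules over the fields $\K$ and $\F$ are automatically free, so only $\E=\O$ needs an argument, and since $\O$ is a discrete valuation ring and $E^\O$ is finitely generated, it suffices to show $E^\O$ has no $\varpi$-torsion. Write $j\colon U\hookrightarrow X$ for the open inclusion and $i\colon Y\hookrightarrow X$ for the complementary closed inclusion, so that $E^\O=\Ext^1_{\D^b_\Lambda(X,\O)}(i_*P_{Y,\l}^\O,\Delta_\mu^\O)$. The first step is to realize $E^\O$ as the top cohomology of a small perfect complex. From the recollement one gets $i^!\Delta_\mu^\O\cong i^*\nabla_\mu^\O[-1]$, hence by $(i_*,i^!)$-adjunction $\RHom_X(i_*P_{Y,\l}^\O,\Delta_\mu^\O)\cong\RHom_Y(P_{Y,\l}^\O,i^*\nabla_\mu^\O)[-1]$. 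Since $i^*\nabla_\mu^\O\in{}^p\D^{\leq 0}_\Lambda(Y,\O)$, since $P_{Y,\l}^\O$ is projective in $\P_\Lambda(Y,\O)$, and since the realization functor is an equivalence (\ref{thmRealEquiv}), this $\RHom$ is concentrated in degrees $0$ and $1$, with $\Ext^1=E^\O=\Hom_{\P_\Lambda(Y,\O)}\!\big(P_{Y,\l}^\O,{}^pH^0(i^*\nabla_\mu^\O)\big)$. Being bounded with finitely generated cohomology over the regular ring $\O$, it is a perfect complex, represented by a two-term complex $R^0\xrightarrow{d}R^1$ of finite free $\O$-modules with $\Ext^0=\ker d$ and $E^\O=\coker d$. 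By \ref{ThmProjX}(2) applied to the smaller variety $Y$ we have $\F\otimes^L_\O P_{Y,\l}^\O\cong P_{Y,\l}^\F$ and $\K\otimes^L_\O P_{Y,\l}^\O\cong P_{Y,\l}^\K$, and $\Delta_\mu^\O$ is $\O$-flat, so tensoring $R^\bullet$ down computes the corresponding complexes over $\F$ and $\K$. Over a discrete valuation ring $\coker d$ is free if and only if the elementary divisors of $d$ are units, i.e. if and only if $\dim_\F\ker(d\otimes\F)=\dim_\K\ker(d\otimes\K)$; that is,
$$E^\O\ \text{is free}\quad\Longleftrightarrow\quad\dim_\F\Hom_X(i_*P_{Y,\l}^\F,\Delta_\mu^\F)=\dim_\K\Hom_X(i_*P_{Y,\l}^\K,\Delta_\mu^\K).$$

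Next one computes these two dimensions. For $\E$ a field the same adjunction and recollement, together with projectivity of $P_{Y,\l}^\E$ and \ref{thmRealEquiv}, give $\Hom_X(i_*P_{Y,\l}^\E,\Delta_\mu^\E)\cong\Hom_{\P_\Lambda(Y,\E)}\!\big(P_{Y,\l}^\E,{}^pH^{-1}(i^*\nabla_\mu^\E)\big)$. By \ref{ThmProjX}(1) for $Y$, the object $P_{Y,\l}^\E$ is the indecomposable projective cover of $\IC_\l^{Y,\E}$ in $\P_\Lambda(Y,\E)$, so the dimension of this $\Hom$-space is the Jordan--Hölder multiplicity $[{}^pH^{-1}(i^*\nabla_\mu^\E):\IC_\l^{Y,\E}]$. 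Thus the claim reduces to the assertion that this multiplicity does not depend on whether $\E=\F$ or $\E=\K$.

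This is where the hypotheses enter. Because $X$ is cell-stratified, the costandard sheaf $\nabla_\mu^\O=l_{\mu*}\O[d_\mu]$ has vanishing $!$-restriction to every smaller stratum and $\O$-free $*$-restrictions (only the cohomology of affine spaces and of their open complements occurs), so $i^*\nabla_\mu^\O$ is an $\O$-free complex whose formation, and that of each of its perverse cohomology objects, commutes with extension of scalars; in particular ${}^pH^{-1}(i^*\nabla_\mu^\F)=\F\otimes{}^pH^{-1}(i^*\nabla_\mu^\O)$ and likewise over $\K$. On the other hand the hypothesis that all $\IC^\O$ are parity sheaves yields $\F\otimes\IC_\nu^{Y,\O}=\IC_\nu^{Y,\F}$ and $\K\otimes\IC_\nu^{Y,\O}=\IC_\nu^{Y,\K}$ for every stratum $\nu$ of $Y$, so the simple objects $\IC_\nu^{Y,\F}$ and $\IC_\nu^{Y,\K}$ have the same stalk-rank functions; the same being true of $\F\otimes{}^pH^{-1}(i^*\nabla_\mu^\O)$ and $\K\otimes{}^pH^{-1}(i^*\nabla_\mu^\O)$, reading off the coefficient of $[\IC_\l]$ in the two Grothendieck groups (each of which is free on the classes of its simple objects, with the same transition matrix to stalk-rank coordinates) gives the desired equality of multiplicities, and the claim follows.

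The main obstacle is the integral control used in the third paragraph: that $i^*\nabla_\mu^\O$ together with its perverse truncations is $\O$-free and compatible with reduction modulo $\varpi$, which is a basic property of standards and costandards on cell-stratified varieties but must be taken from the $\O$-linear foundations. One should also make sure the preceding steps of the recursion are genuinely at hand — in particular that \ref{ThmProjX} for the smaller variety $Y$, including $\F\otimes^L_\O P_{Y,\l}^\O\cong P_{Y,\l}^\F$, has already been established — and verify the degeneration of the perfect complex into degrees $0,1$ (that $\Ext^{<0}$ vanishes by perversity of $i_*P_{Y,\l}^\O$ and $\Delta_\mu^\O$, and $\Ext^{\geq 2}$ vanishes because $i^*\nabla_\mu^\O\in{}^p\D^{\leq 0}$ and $P_{Y,\l}^\O$ is projective).
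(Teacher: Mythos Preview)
Your overall strategy is correct and ultimately parallel to the paper's, but the route is longer and the crucial base-change step is not adequately justified.

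The paper argues as follows. From the structure theory of finitely generated $\O$-modules one reduces to $\dim_\F E^\F=\dim_\K E^\K$ (your Euler-characteristic observation on the two-term perfect complex shows this is equivalent to your criterion on $\Ext^0$, so nothing is lost by working with $\Ext^1$ directly). The defining extension \eqref{EqnProjDefSeq} gives $\dim E^\E=[P_\l^\E:\Delta_\mu^\E]$ immediately, since $i_*P_{Y,\l}^\E$ has a $\Delta$-flag with factors $\Delta_\nu^\E$ for $\nu$ in $Y$. BGG reciprocity \ref{lemBGG} then converts this to $[\nabla_\mu^\E:\IC_\l^\E]$, and Lemma \ref{lemParityGoodMult}---whose proof is exactly your ``same stalk-rank function, same transition matrix'' argument, applied however to $\nabla_\mu$ itself rather than to a perverse truncation of its restriction---finishes.

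Your detour through ${}^pH^{-1}(i^*\nabla_\mu^\E)$ works, but the step you flag as ``a basic property of standards and costandards on cell-stratified varieties'' is misdiagnosed. Freeness of the stalks of $i^*\nabla_\mu^\O$ does \emph{not} by itself imply that its perverse truncations are torsion-free or that they commute with $\F\otimes^L\!-$; perverse truncation is not controlled by stalks alone. What actually makes it work here is again the $\IC^\O$-parity hypothesis: from the triangle $\Delta_\mu\to\nabla_\mu\to i_*i^*\nabla_\mu$ one identifies $i_*{}^pH^{-1}(i^*\nabla_\mu^\E)=\ker(\Delta_\mu^\E\to\IC_\mu^\E)$, and parity ensures $\F\otimes\IC_\mu^\O=\IC_\mu^\F$ is perverse, so the short exact sequence $0\to K^\O\to\Delta_\mu^\O\to\IC_\mu^\O\to 0$ survives reduction modulo $\varpi$. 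Once that is in place your argument and the paper's agree. The paper's formulation via BGG reciprocity simply sidesteps the perverse-truncation issue by working with the whole of $\nabla_\mu$ in the Grothendieck group of $\P_\Lambda(X,\E)$.
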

The claim is obvious for $\E=\F,\K$ and we will see later that it holds also for $\E=\O$.
Hence it makes sense to consider the (naive) dual $E^*:=\Hom(E,\E)$ of $E$. The identity
element in $E^*\otimes E=\Ext^1(P_{Y,\l}^\E,E^*\otimes\Delta^\E_\mu)$ gives rise to a canonical extension, which defines $P_{\l}^\E$:
\begin{equation}\label{EqnProjDefSeq}
 0 \rightarrow E^*\otimes \Delta_\mu \rightarrow P_{\l}^\E \rightarrow P_{Y,\l}^\E \rightarrow 0
\end{equation}

\begin{proof}[Proof of \ref{ClaimEoFree}]
We want to show that $E^\O$ is also free.

By structure theory of modules over a principal ideal domain, we know that $E$ is of the form
$$E^\O\cong \O^n \oplus \bigoplus_{i=1}^d \O/\varpi^{n_i}$$
Since extension of scalars commutes with $\RHom$, we have $\dim  E^\K=n$ and $\dim  E^\F \geq n+d$. 
Here we used that any complex of modules over a principal ideal domain is quasi-isomorphic to its cohomology.
If we can show $\dim  E^\F=\dim  E^\K$ we are done. 

Copying the proof of \cite[3.2.1]{BGS}, we know that 
$$\Hom(P_\l^\K,\IC_\mu^\K)=\K^{\delta_{\l,\mu}} \text{ and } \Hom(P_\l^\F,\IC_\mu^\F)=\F^{\delta_{\l,\mu}}$$ 
Now using \ref{lemBGG},\ref{lemParityGoodMult} we compute:

\begin{align*}
 \dim  E^\F &= [P^\F_\l:\Delta^\F_\mu]\\
&=[\nabla^\F_\mu : \IC^\F_\l]\\
&=[\nabla^\K_\mu : \IC^\K_\l]\\
&=[P^\K_\l:\Delta^\K_\mu] \\
&=\dim  E^\K
\end{align*}
\end{proof}
\begin{claim}
 $P_{\l}^\E$ is projective
\begin{proof}
This can be shown as in \cite[2.3.1]{RSW}.
\end{proof}
\end{claim}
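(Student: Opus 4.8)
The plan is to prove that $P_\l^\E$ is projective by induction on the number of strata, using the exact sequence \eqref{EqnProjDefSeq} together with projectivity of $P_{Y,\l}^\E$ and $\Delta_\mu^\E$. First I would observe that it suffices to show $\Ext^1(P_\l^\E, \cal G) = 0$ for all $\cal G \in \P_\Lambda(X,\E)$, since $\P_\Lambda(X,\E)$ has finite cohomological dimension (\ref{thmFiniteCohoDim}) and one can then propagate vanishing to all higher $\Ext$'s, or alternatively argue directly with the long exact sequence. Applying $\Hom(\_,\cal G)$ to \eqref{EqnProjDefSeq} gives a long exact sequence, and the key input is the portion
\begin{equation*}
\Ext^1(P_{Y,\l}^\E,\cal G) \rightarrow \Ext^1(P_\l^\E,\cal G) \rightarrow \Ext^1(E^*\otimes \Delta_\mu, \cal G) \rightarrow \Ext^2(P_{Y,\l}^\E,\cal G).
\end{equation*}

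The subtlety is that $P_{Y,\l}^\E$ is projective in $\P_\Lambda(Y,\E)$, not in $\P_\Lambda(X,\E)$; here $Y = X \setminus U$ is the closed complement of the open stratum $U = X_\mu$. So I would first relate $\Ext$-groups computed on $X$ to those computed on $Y$ and on $U$ via the recollement attached to the open-closed decomposition $(U, Y)$. Denoting by $j: U \inj X$ the open inclusion and $i: Y \inj X$ the closed one, the point is that any $\cal G \in \P_\Lambda(X,\E)$ sits in exact sequences built from $j_!j^*\cal G$, $i_*i^*\cal G$ and so on, and that $\RHom_X(i_* \cal F, \cal G) = \RHom_Y(\cal F, i^! \cal G)$. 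Since $P_{Y,\l}^\E$ lives on $Y$, one gets $\Ext^k_X(i_* P_{Y,\l}^\E, \cal G) = \Ext^k_Y(P_{Y,\l}^\E, i^!\cal G)$, and projectivity of $P_{Y,\l}^\E$ on $Y$ would kill this for $k \geq 1$ \emph{provided $i^!\cal G$ is perverse on $Y$} — which is exactly where the construction of $P_\l^\E$ as the extension by $E^* \otimes \Delta_\mu$ is engineered to help: the term $E^* \otimes \Delta_\mu = E^* \otimes j_! \E[d_\mu]$ is built precisely to correct the failure of $i^!$-exactness, i.e. to absorb $\Ext^1(P_{Y,\l}^\E, \Delta_\mu)$.

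Concretely, I would run the argument as in \cite[2.3.1]{RSW}: show that $\Hom_X(P_\l^\E, \_)$ is exact by checking that, for a short exact sequence $0 \to \cal G' \to \cal G \to \cal G'' \to 0$ in $\P_\Lambda(X,\E)$, applying $j^*$ and $i^!$ and using the octahedral/recollement machinery reduces the required surjectivity to (a) exactness of $\Hom_Y(P_{Y,\l}^\E,\_)$ on perverse sheaves over $Y$, which holds by the inductive hypothesis, (b) exactness of $\Hom(\Delta_\mu^\E, \_) = \Hom(\E[d_\mu], j^!\_) = \Hom(\E[d_\mu], j^*\_)$, which holds since $j$ is an open immersion and $j^*$ is $t$-exact, and (c) the defining property of the extension class: the connecting map $\Ext^1(P_{Y,\l}^\E, \cal G)$-business is exactly cancelled because the universal extension \eqref{EqnProjDefSeq} was chosen to represent the identity in $E^* \otimes E = \Ext^1(P_{Y,\l}^\E, E^* \otimes \Delta_\mu)$, so that the boundary map $\Hom(E^*\otimes\Delta_\mu,\cal G) \to \Ext^1(P_{Y,\l}^\E,\cal G)$ induced by \eqref{EqnProjDefSeq} is surjective whenever $\cal G$ is supported on $Y$. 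Note that by Claim \ref{ClaimEoFree} (valid for all $\E$, including $\O$, now that it has been proved) the object $E^* \otimes \Delta_\mu$ makes sense and is a genuine perverse sheaf, so the exact sequence lives in $\P_\Lambda(X,\E)$.

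The main obstacle I anticipate is the bookkeeping with the recollement when $\E = \O$: over a field one may freely use that perverse sheaves form a finite-length abelian category and argue with simple subquotients, but over $\O$ the category $\P_\Lambda(X,\O)$ is only noetherian, not artinian, and $i^!$ need not be $t$-exact, so one must phrase everything in terms of the perverse truncation functors ${}^p\tau$ and the distinguished triangles $i_*i^! \cal G \to \cal G \to Rj_* j^* \cal G \to$ rather than honest short exact sequences. The care taken in \cite{RSW} to set up the $p_{1/2}$ $t$-structure (as flagged in the footnote to the definition of $\cal P_\Lambda(X,\O)$) is what makes this go through, and I would simply invoke \cite[2.3.1]{RSW} for the precise diagram chase, noting that our $P_\l^\E$ is defined by the identical recipe so the proof transports verbatim. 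A secondary point worth checking is that one genuinely needs only $\Ext^1$-vanishing and not higher vanishing as a separate step: since $\P_\Lambda(X,\E)$ has finite projective dimension and enough projectives (\ref{thmFiniteCohoDim}), an object with $\Ext^1(P_\l^\E,\_) = 0$ is automatically projective, so the whole burden is the $\Ext^1$ computation above.
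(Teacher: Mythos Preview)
Your proposal is correct and follows the same approach as the paper, which simply defers to \cite[2.3.1]{RSW}; you have essentially sketched the content of that citation. One small slip: the claim that the boundary map $\Hom(E^*\otimes\Delta_\mu,\cal G) \to \Ext^1(P_{Y,\l}^\E,\cal G)$ is surjective ``whenever $\cal G$ is supported on $Y$'' is the trivial case (both sides vanish there); the real work, which you correctly identify elsewhere via the universal-extension idea, is showing surjectivity for arbitrary $\cal G$ by relating ${}^p\!H^1(i^!\cal G)$ to $j^*\cal G$ through the recollement triangle.
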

We now have our projectives and freeness of $E$ implies 
$$P_\l^\F = \F \otimes P_\l^\O \text{ and } P_\l^\K = \K \otimes P_\l^\O  $$
It remains to check the ``orthonormality`` relation between projectives and $IC$-sheaves:
\begin{claim}
$\Hom(P_\l^\E,\IC_\mu^\E)=\E^{\delta_{\l,\mu}}$
\begin{proof}
If $\E=\K,\F$, this can be shown as in the proof of \cite[3.2.1]{BGS}. 
Since 
$$\F^{\delta_{\l,\mu}}=\RHom(P_\l^\F,\IC_\mu^\F)=\F \overset{L}{\otimes} \RHom(P_\l^\O,\IC_\mu^\O)$$
it holds also for $\E=\O$. Here we used parity.
\end{proof}
\end{claim}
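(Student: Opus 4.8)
The plan is to deduce the case $\E=\O$ from the case where $\E$ is a field; the latter is the only point where geometry genuinely enters, and (as is in effect already invoked in the proof of Claim~\ref{ClaimEoFree}) it can be obtained by running the recursion of Construction~\ref{constrproj}, imitating \cite[3.2.1]{BGS}. So I would first sketch that field argument and then give the purely formal passage to $\O$.

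Let $\E=\K$ or $\F$ and induct on the number of strata. When $X_\l$ is open the assertion is settled inside Construction~\ref{constrproj} by adjunction, so assume $X_\l$ is not open; let $U=X_\mu$ be the chosen open stratum with complement $Y$, write $i_U,i_Y$ for the inclusions, and let $P_{Y,\l}^\E$ be the projective on $Y$ supplied by the inductive hypothesis. Apply $\Hom_X(-,\IC_\nu^\E)$ to the defining sequence \eqref{EqnProjDefSeq}. If $\nu\neq\mu$, then $\IC_\nu^\E$ is the closed pushforward of an $\IC$-sheaf on $Y$, while $E^*\otimes\Delta_\mu^\E=i_{U!}(E^*\otimes\E[d_\mu])$, so $i_Y^*(E^*\otimes\Delta_\mu^\E)=0$ and hence $\Ext^i_X(E^*\otimes\Delta_\mu^\E,\IC_\nu^\E)=0$ for all $i$ by adjunction; the long exact sequence then gives $\Hom_X(P_\l^\E,\IC_\nu^\E)=\Hom_X(P_{Y,\l}^\E,\IC_\nu^\E)$, which by full faithfulness of closed pushforward equals the inductively known group $\E^{\delta_{\l\nu}}$. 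If $\nu=\mu$ one must show the group vanishes (note $\l\neq\mu$); here I would use that \eqref{EqnProjDefSeq} is the \emph{universal} extension, so that the connecting map $\Hom_X(E^*\otimes\Delta_\mu^\E,\IC_\mu^\E)=E\to\Ext^1_X(P_{Y,\l}^\E,\IC_\mu^\E)$ is identified with the map $\Ext^1_X(P_{Y,\l}^\E,\Delta_\mu^\E)=E\to\Ext^1_X(P_{Y,\l}^\E,\IC_\mu^\E)$ induced by the canonical surjection $\Delta_\mu^\E\to\IC_\mu^\E$, and then check its injectivity, exactly as in \cite[3.2.1]{BGS}; alternatively one fixes $\dim\Hom_X(P_\l^\E,\IC_\mu^\E)$ by the dimension count via BGG reciprocity \ref{lemBGG} and \ref{lemParityGoodMult} already carried out in the proof of Claim~\ref{ClaimEoFree}. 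I expect this vanishing for the open stratum to be the only real obstacle, and it is the place where one must verify that the argument of \cite[3.2.1]{BGS} survives in the present, merely acyclically stratified, setting.

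It then remains to treat $\E=\O$, which is formal. Since $P_\l^\O$ is projective in $\P_\Lambda(X,\O)$ (preceding claim) and $\IC_\mu^\O$ lies in the heart, the realization equivalence (Theorem~\ref{thmRealEquiv}) identifies $\RHom_{\D^b_\Lambda(X,\O)}(P_\l^\O,\IC_\mu^\O)$ with $\Ext^\bullet_{\P_\Lambda(X,\O)}(P_\l^\O,\IC_\mu^\O)$, which is concentrated in degree $0$; call this finitely generated $\O$-module $H$. As established above from the freeness of $E$, $P_\l^\F=\F\otimes P_\l^\O$, and since the $\IC^\O$ are parity, $\F\otimes\IC_\mu^\O=\IC_\mu^\F$; as the six functors commute with extension of scalars, so does $\RHom$, and $P_\l^\F$ is again projective, so
$$\F\overset{L}{\otimes}_{\O}H=\RHom_{\D^b_\Lambda(X,\F)}(P_\l^\F,\IC_\mu^\F)=\Hom(P_\l^\F,\IC_\mu^\F)=\F^{\delta_{\l\mu}},$$
the last equality being the field case. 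That $\F\overset{L}{\otimes}_{\O}H$ is concentrated in degree $0$ forces $\mathrm{Tor}^{\O}_1(\F,H)=0$, so $H$ is $\O$-free, and then $\mathrm{rk}_{\O}H=\dim(\F\otimes_{\O}H)=\delta_{\l\mu}$, i.e.\ $H\cong\O^{\delta_{\l\mu}}$. (Tensoring $H$ with $\K$ re-derives the case $\E=\K$.)
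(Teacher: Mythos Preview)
Your proposal is correct and follows essentially the same approach as the paper: first the field cases are handled by the recursion of \cite[3.2.1]{BGS}, and then the case $\E=\O$ is deduced from $\E=\F$ via $\F\overset{L}{\otimes}\RHom(P_\l^\O,\IC_\mu^\O)=\RHom(P_\l^\F,\IC_\mu^\F)$ together with parity. The paper's proof is terser, but your expanded treatment of the BGS argument (including the careful handling of the open stratum via $\Ext^1(P_{Y,\l},K)=0$ for the kernel $K$ of $\Delta_\mu\to\IC_\mu$) and of the $\mathrm{Tor}_1$-vanishing step spells out exactly what underlies the two displayed equalities in the paper.
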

\end{construction}

\begin{lem}\label{lemParityGoodMult}
 Let $X$ be an acyclically stratified variety, which satisfies $\IC^\O$-parity. Then for all $\l,\mu$
the multiplicities 
$$[\Delta_\l^\F:\IC_\mu^\F]=[\Delta_\l^\K : \IC_\mu^\K]$$
 coincide.
\begin{proof}
For the duration of this proof let $\E$ be a field (i.e. $\E=\K$ or $\E=\F$). 
Both the $\IC$-sheaves and the standard sheaves form a basis of the Grothendieck group of 
$\cal P_\Lambda(X,\E)$. We only need to check that the ``inverse multiplicities''
$$[\IC_\mu^\F:\Delta_\l^\F]\overset{!}{=}[\IC_\mu^\K:\Delta^\K_\l]$$
coincide. Using $h(\Delta_\l,\nabla_\mu)=\delta_{\l,\mu}$ we compute:
\begin{align*}
 [\IC_\mu^\E:\Delta_\l^\E]&=h(\IC_\mu^\E,\nabla_\l^\E) \\
&=\chi_{\RHom(\IC_\mu^\E,\nabla_\l^\E)} \\
&=\chi_{\RHom(l_\l^* \IC_\mu^\E,\E[d_\l])} 
\end{align*}
Hence it suffices to show that 
$$\dim  \Ext^i(l_\l^* \IC_\mu^\F,\F[d_\l])\overset{!}{=}\dim  \Ext^i(l_\l^* \IC_\mu^\K,\K[d_\l])$$
By parity we know that $\Ext^i(l_\l^* \IC_\mu^\O,\O[d_\l])$ is free (\ref{LemExtGrpsTheSame}) and $\E\otimes \IC^\O=\IC^\E$.
Since the six functors commute with extension of scalars we are done.
\end{proof}
\end{lem}

\begin{rem}\label{remGenerator}
 Note for later use, that in the situation of \ref{ThmProjX} the map $P_\l\surj \Delta_\l \surj \IC_\l$ generates $\Hom(P_\l,\IC_\l)$.
We have just seen, that this space is always free of rank $1$. Now if $\E$ is a field, the assertion follows since the map is non-zero. 
The case $\E=\O$ is a consequence of the case $\E=\F$ by a Nakayama style argument.
\end{rem}

\begin{rem}
In \cite[2.4.2.]{RSW} uniqueness properties of projective perverse sheaves are discussed. It follows 
that the $P_\l^\E$ are the only indecomposable projective objects in $\P_{\Lambda}(X,\E)$. 

With additional effort, one can even show that $P_\l^\E\rar \IC_\l^\E$ is a projective cover in $\P_{\Lambda}(X,\E)$.
\end{rem}

\begin{lem}\label{lemHomProjFree}
 Let $X$ be an acyclically stratified variety. Then for all $\l,\mu$ 
 $$\Hom(P_\l^\O,P_\mu^\O)$$
 is a free $\O$-module.
 \begin{proof}
 This is proven in \cite[2.4.1]{RSW}. Under the assumption of $\IC_\l^\O$-parity
one can alternatively exploit coincidence of multiplicities and deduce
$$\dim \K \otimes \Hom(P_\l^\O,P_\mu^\O) = \dim \F \otimes \Hom(P_\l^\O,P_\mu^\O)$$
\end{proof}
\end{lem}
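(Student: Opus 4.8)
The plan splits into two cases. For an arbitrary acyclically stratified $X$, where I have no transparent hold on $P_\l^\O$, I would just quote the freeness statement \cite[2.4.1]{RSW}. Under the $\IC^\O$-parity hypothesis, however, the explicit construction \ref{constrproj} together with $P_\l^\F=\F\otimes P_\l^\O$ and $P_\l^\K=\K\otimes P_\l^\O$ from \ref{ThmProjX} makes a short self-contained argument available, and this is the one I would record. Put $M:=\Hom(P_\l^\O,P_\mu^\O)$, a finitely generated module over the discrete valuation ring $\O$, so $M\cong\O^n\oplus T$ with $T$ a finite torsion module; then $M$ is free exactly when $\mathrm{Tor}_1^\O(\F,M)=0$, equivalently when $\dim(\K\otimes M)=\dim(\F\otimes M)$. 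So everything reduces to understanding how $M$ meets the two base changes.

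First I would check that $\RHom_{\D^b_\Lambda(X,\O)}(P_\l^\O,P_\mu^\O)$ is concentrated in cohomological degree $0$, where it is simply $M$. By the realization equivalence \ref{thmRealEquiv} this graded Hom-space is $\Ext^\bullet_{\cal P_\Lambda(X,\O)}(P_\l^\O,P_\mu^\O)$; it vanishes in negative degrees because $\Ext$ in an abelian category does, and it vanishes in positive degrees because $P_\l^\O$ is projective in $\cal P_\Lambda(X,\O)$ by \ref{ThmProjX} (with \ref{thmFiniteCohoDim} ensuring we are in a noetherian category of finite projective dimension). The identical argument over $\F$ and over $\K$ shows that $\RHom(P_\l^\F,P_\mu^\F)$ and $\RHom(P_\l^\K,P_\mu^\K)$ are likewise concentrated in degree $0$.

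Next comes base change. Realising external $\RHom$ on $\D^b_\Lambda(X,\O)$ as the internal hom complex followed by pushforward to a point, and using that both operations commute with extension of scalars — this is the analogue for constructible sheaves of \ref{lemExtensionOfScalars}, a consequence of the six functors commuting with extension of scalars — together with $P_\l^\F=\F\otimes P_\l^\O$ and $P_\l^\K=\K\otimes P_\l^\O$ from \ref{ThmProjX}, I get
$$\F\overset{L}{\otimes}\RHom(P_\l^\O,P_\mu^\O)=\RHom(P_\l^\F,P_\mu^\F)\qquad\text{and}\qquad\K\otimes\RHom(P_\l^\O,P_\mu^\O)=\RHom(P_\l^\K,P_\mu^\K).$$
By the previous step the left-hand sides are $\F\overset{L}{\otimes}M$ and $\K\otimes M$, while the right-hand sides sit in degree $0$; comparing $H^{-1}$ in the $\F$-identity yields $\mathrm{Tor}_1^\O(\F,M)=0$, so $M$ is free. (Alternatively, having all three $\RHom$'s in degree $0$, one reads off $\dim(\F\otimes M)=\dim\Hom(P_\l^\F,P_\mu^\F)$ and $\dim(\K\otimes M)=\dim\Hom(P_\l^\K,P_\mu^\K)$, and then matches these numbers through BGG reciprocity \ref{lemBGG} and the coincidence of standard multiplicities \ref{lemParityGoodMult}, just as in the proof of Claim \ref{ClaimEoFree}.)

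The step I expect to cost the most care is the base-change identity: one must be sure that external $\RHom$ in the $\O$-linear category $\D^b_\Lambda(X,\O)$ genuinely commutes with $\F\overset{L}{\otimes}$ and $\K\overset{L}{\otimes}$, and that $\F\overset{L}{\otimes}P_\l^\O$ carries no higher Tor, so that it really is the perverse sheaf $P_\l^\F$ and not merely a complex with $H^0=P_\l^\F$. The latter is exactly where freeness of the costalks of $P_\l^\O$ — which comes from the $\Delta$-filtration produced in \ref{constrproj} under the $\IC^\O$-parity assumption — is needed; everything else is formal.
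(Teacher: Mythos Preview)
Your proposal is correct and matches the paper's approach: cite \cite[2.4.1]{RSW} for the general statement, and under $\IC^\O$-parity deduce freeness from a base-change comparison of $\F\otimes M$ and $\K\otimes M$. Your primary route via $\mathrm{Tor}_1$-vanishing (concentration of all three $\RHom$'s in degree $0$) is a mild streamlining of the paper's one-line multiplicity sketch, which you also record verbatim as your parenthetical alternative.
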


\section{Graded categories}
We recall some notions and results from \cite{BGS}.
\begin{defi}
Let $\wt {\cal C}, \cal C$ be abelian categories, linear over a field $k$, with finite dimensional $\Hom$ spaces
and in which every object has finite length.
\begin{itemize}
 \item We say that $\wt {\cal C}$ is a mixed category if it is equipped with
an assignment of an integer to each isomorphism class of irreducibles 
$$w:\Irr\wt {\cal C}\rightarrow \Z$$ 
such that $\Ext^1(M,N)=0$ if $w(M) \leq w(N)$. We call $w(M)$ the weight of $M$.
\item A degree $d$ Tate-twist on a mixed category $\wt{\cal C}$ is an auto-equivalence $\langle d \rangle$ with the property 
that $w(M\langle d \rangle )=w(M)+d$. We will denote the $n$-fold iteration of $\langle d \rangle$ by $\langle nd \rangle$
\item Let $\wt {\cal C}$ be a mixed category with degree $d$ Tate-twist. Then a functor 
$v:\wt {\cal C} \rightarrow \cal C$
together with a natural isomorphism $v\cong v \circ \langle d \rangle$ is called degrading functor 
if it is exact, faithful and preserves semisimple objects.
\item A grading on $\cal C$ is a degrading $v:\wt {\cal C} \rightarrow \cal C$ such that every simple object of $\cal C$ 
lies in the essential image of $v$ and the natural map

\begin{equation}
 \bigoplus_n \Ext^i_{\wt {\cal C}}(M,N \langle nd\rangle ) \rightarrow \Ext^i_{\cal C}(vM,vN)
\end{equation}

is an isomorphism for all $M,N \in \wt {\cal C}$ and $i\in \N$.
\item A grading $v:\wt {\cal C} \rightarrow \cal C$ is called Koszul, if 
$$\Ext^i_{\wt {\cal C}}(M,N)=0$$
for $M,N\in \Irr\wt {\cal C}$ such that $w(M)\neq w(N)+i$.
\end{itemize}
\end{defi}
Let us explain, where the name grading comes from. Given a mixed category with degree $d$ Tate-twist we can massage it into a 
category enriched over $d\Z$-graded vector spaces by defining $\grHom_{nd}(M,N):=\wt {\cal C}(M\langle nd \rangle,N)$.

Similarly we define $\grExt^i_{nd}(M,N):=\Ext^i_{\wt {\cal C}}(M\langle nd \rangle,N)$. The grading condition 
can then be phrased as
\begin{equation}
v \text{ }\grExt^i(M,N)=\Ext^i(vM,vN) \text{ for all $i$}
\end{equation}
where the $v$ on the left hand side means taking the underlying vector space.

\begin{ex}
Typical examples of gradings can be obtained as follows:

Let $A$ be a finite dimensional non-positively graded algebra over a field $k$ with semisimple $A_0$.
Then any simple graded module is annihilated by $A_{<0}$ and hence 
concentrated in a single degree. We define its weight to be this degree.
This turns $A\-\grmod$ into a mixed category with degree $1$ Tate-twist by the rule 
$$M\langle 1 \rangle _k=M_{k-1}$$
The forgetful functor 
$$v:A\-\grmod\rightarrow A\- \mod$$
is a grading. It is Koszul, if and only if the grading on the ring $A$ is Koszul in the usual sense (after reversing the sign of the grading) \cite[2.1.3]{BGS}.
In fact any grading is equivalent to a profinite-dimensional variant of this example \cite[4.1.6]{BGS}. 

\end{ex}
Given an abelian category $\cal A$ we denote by $\Irr\cal A$ (resp. $\Proj \cal A$) the class of irreducible objects 
(resp. indecomposable projective objects) modulo isomorphism.
If $\wt{\cal C}\rar \cal C$ is a grading, $\Irr\wt{\cal C}$ (resp. $\Proj \wt{\cal C}$)
carries a $d\Z$-operation. We denote the quotient by $\Irr\wt{\cal C}/{d\Z}$ (resp. $\Proj \wt{\cal C}/d\Z$).

\begin{facts}\label{factsgradings}
 Let $v:\wt {\cal C} \rightarrow \cal C$ be a grading. Then $v$ induces bijections
$$\Irr\wt {\cal C} /d\Z \rightarrow \Irr\cal C \text{ and } \Proj \wt {\cal C} /d\Z \rightarrow \Proj  \cal C$$
\begin{proof}
 \cite[4.3.2]{BGS} .
\end{proof}
\end{facts}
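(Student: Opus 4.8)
The plan is to deduce both bijections from the case $i=0$ of the grading isomorphism (together with the case $i=1$ for the projectives), after a Schur-type collapse. The crucial elementary observation is this: if $L,L'$ are irreducible objects of $\wt{\cal C}$, then a nonzero morphism $L\to L'\langle nd\rangle$ is an isomorphism, and since $w(L'\langle nd\rangle)=w(L')+nd$ while isomorphic irreducibles have equal weight, $\Hom_{\wt{\cal C}}(L,L'\langle nd\rangle)$ vanishes for every $n$ with $L'\langle nd\rangle\not\cong L$. Hence for irreducible $L,L'$ the $i=0$ grading isomorphism collapses to $\Hom_{\cal C}(vL,vL')=\bigoplus_{n\,:\,L'\langle nd\rangle\cong L}\Hom_{\wt{\cal C}}(L,L'\langle nd\rangle)$, each summand being a copy of the division ring $\End_{\wt{\cal C}}(L)$.

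First I would treat irreducibles. Since $v$ is exact and faithful it reflects the zero object; hence, given an irreducible $S\in\cal C$, any object $X$ with $vX\cong S$ has an irreducible subobject $L\inj X$, and the induced nonzero monomorphism $vL\inj vX\cong S$ is an isomorphism. So every irreducible of $\cal C$ has an irreducible preimage, which already gives surjectivity of $\Irr\wt{\cal C}/d\Z\to\Irr\cal C$. Applying this to an irreducible subobject of $vL$ for $L$ irreducible, and using the collapsed $\Hom$-isomorphism, one finds $L\cong L''\langle nd\rangle$ for some irreducible $L''$ with $vL''$ irreducible, whence $vL\cong vL''$ is irreducible. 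Finally, if $vL\cong vL'$ with $L,L'$ irreducible, the collapsed isomorphism forces $L\cong L'\langle nd\rangle$ for some $n$, i.e. injectivity of $\Irr\wt{\cal C}/d\Z\to\Irr\cal C$.

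For projectives I would first check that $v$ preserves projectivity: in a finite-length abelian category an object $Q$ is projective as soon as $\Ext^1(Q,S)=0$ for every irreducible $S$ (by d\'evissage of an arbitrary sequence $0\to A\to B\to Q\to 0$ along a proper subobject of $A$), and for projective $P$ one has $\Ext^1_{\cal C}(vP,S)=\Ext^1_{\cal C}(vP,vL)=\bigoplus_n\Ext^1_{\wt{\cal C}}(P,L\langle nd\rangle)=0$. Now let $P\in\wt{\cal C}$ be an indecomposable projective, so $\mathrm{head}(P)$ is irreducible. Since a nonzero morphism out of $P$ hits only quotients of $P$, the weight argument shows $\Hom_{\cal C}(vP,vL)=0$ for every irreducible $L$ with $vL\not\cong v\,\mathrm{head}(P)$, while $\Hom_{\cal C}(vP,v\,\mathrm{head}(P))\cong\End_{\wt{\cal C}}(\mathrm{head}(P))$; as $v$ preserves semisimple objects this means $\mathrm{head}(vP)\cong v\,\mathrm{head}(P)$, irreducible and of multiplicity one. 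Hence $vP$ is a projective object with irreducible head, i.e. the projective cover of $v\,\mathrm{head}(P)$, in particular indecomposable. Conversely, an indecomposable projective $Q\in\cal C$ has irreducible head $S=vL$ (irreducible $L$), so $Q$ is the projective cover of $S$ and therefore $Q\cong vP_L$, where $P_L$ is the projective cover of $L$ in $\wt{\cal C}$; and $vP\cong vP'$ gives $v\,\mathrm{head}(P)\cong v\,\mathrm{head}(P')$, so $P'\cong P\langle nd\rangle$ by the irreducible case. This yields the bijection $\Proj\wt{\cal C}/d\Z\to\Proj\cal C$.

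The main obstacle is the irreducible case, precisely because $v$ is assumed essentially surjective only onto irreducibles (not onto all of $\cal C$) and assumed to compute $\Ext$-groups correctly only between objects in its image; everything else must be bootstrapped from there, using exactness, faithfulness, preservation of semisimplicity, and the Schur/weight collapse. Once the irreducible case and the fact that $v$ preserves projectivity are in hand, the projective case is the formal matter of matching indecomposable projectives with their irreducible heads; I expect no further difficulty there beyond invoking the standard existence of projective covers in these finite-length categories.
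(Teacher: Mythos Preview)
The paper does not give its own proof of this statement: it simply cites \cite[4.3.2]{BGS}. You have therefore supplied a self-contained argument where the paper defers to the literature, and your line of reasoning---collapse the $i=0$ grading isomorphism for irreducibles via Schur's lemma and the weight function, then match indecomposable projectives through their irreducible heads using the $i=0$ and $i=1$ isomorphisms---is essentially the argument one finds in \cite{BGS}. It is correct.

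Two small remarks. First, your appeal to ``$v$ preserves semisimple objects'' in the projective step is not really where the work is being done: the identification $\mathrm{head}(vP)\cong v\,\mathrm{head}(P)$ already follows from your $\Hom$-computation together with the fact (established in your first paragraph) that $v$ sends irreducibles to irreducibles. Second, your closing invocation of ``the standard existence of projective covers in these finite-length categories'' is a genuine additional hypothesis: finite length, $k$-linearity, and finite-dimensional $\Hom$ spaces do \emph{not} by themselves guarantee enough projectives. In \cite{BGS} and in all applications in the present paper this hypothesis is satisfied, but surjectivity of $\Proj\wt{\cal C}/d\Z\to\Proj\cal C$ really does use that every irreducible of $\wt{\cal C}$ admits a projective cover, so it is worth stating this cleanly rather than absorbing it into the word ``standard''.
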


\subsection{Graded multiplicities}\label{SubsecGradedMult}
Given a grading $\wt {\cal C} \rightarrow \cal C$ one can refine multiplicity formulas into equations between Laurent polynomials.
We fix some notation:
\begin{notation}
Let $V=\bigoplus V_n$ be a $\Z$-graded vector space, whose total dimension $\sum \dim  V_n$ is finite. Then we denote its graded dimension by 
$$\grdim  V:= \sum (\dim  V_n) \v^n \in \Z[\v,\v^{-1}]$$
We denote the graded dual of $V$ by $V^*$ and the involution of $\Z[\v,\v^{-1}]$ which exchanges $\v$ and $\v^{-1}$ by $(\_)^*$ as well.
Hence we obtain formulas $(\grdim  V)^*=\grdim  (V^*)$ and $\grdim  (V\otimes W)= \grdim  V \cdot \grdim  W$. 
\end{notation}
Now let $v:\wt {\cal C} \rightarrow \cal C$ be a grading with degree $d$ Tate-twist. Then the Grothendieck group $K(\wt {\cal C})$
is a graded $\Z[\v^d,\v^{-d}]$-module, where $\v^d\cdot [M]:=[M\langle d \rangle]$ and $\deg \v^d=d$.
By \ref{factsgradings} we know that $K(\wt {\cal C})$ is graded free:
$$K(\wt {\cal C}) \cong \Z[\v^d,\v^{-d}] \underset{\Z}{\otimes} K (\cal C) $$
For any $\Z[\v^d,\v^{-d}]$-basis $(L_i)$ of $K(\wt {\cal C})$ and an element $M\in K(\wt {\cal C})$, we define the multiplicity of 
$L_i$ in $M$ by the formula 
$$M=\sum [M:L_i] \cdot L_i$$
So multiplicities are Laurent polynomials $[M:L_i]\in \Z[\v^d,\v^{-d}]$.
Assume that $\wt {\cal C}$ is also of finite cohomological dimension. Then we have a pairing 
$K(\wt {\cal C}) \times K(\wt {\cal C}) \rightarrow \Z[\v^d,\v^{-d}]$
induced by
$$h([x],[y]):= \sum (-1)^i \grdim \grExt^i(x,y)  \text{ for } x,y\in \cal A$$
We note the following rules for computation with graded multiplicities:
\begin{lem}\label{lemGradedComputation}
 \begin{itemize}
\item We have $h(f M ,g N)= f^* g \cdot h(M,N)$ for $f,g \in \Z[\v^d,\v^{-d}]$.
\item Let $(P_i),(L_j)$ be two dual bases in the sense that $h(P_i,L_j) = \delta_{ij}$. Then we have 
$$[M:L_i]=h(P_i,M)$$
\item Let $(\Delta_i),(\nabla_j)$ be two dual bases in the sense that $h(\Delta_i,\nabla_j) = \delta_{ij}$. Then we have 
$$[M:\Delta_i]=h(M,\nabla_i)^*$$ 
\item Let $( \_ )^* :\cal A^{op} \rar \cal A$ be an equivalence such that $(M \langle d \rangle)^* \cong M^* \langle -d\rangle$
for all $M\in \cal A$. Then we have
$$[M:L_i]^*=[M^*:L_i^*]$$ 
\item Let $(P_i),(L_j)$ and $(\Delta_i),(\nabla_j)$ be two pairs of dual $\Z[\v^d,\v^{-d}]$-bases of $K(\wt {\cal C})$, in the sense that
$$h(P_i,L_j)=h(\Delta_i,\nabla_j)=\delta_{ij}$$
Then we have the following identities of Laurent polynomials
$$ [P_i:\Delta_j]=h(P_i,\nabla_j)^*=[\nabla_j:L_i]^*$$
\end{itemize}
\begin{proof}
The proof is straightforward.
\end{proof}
\end{lem}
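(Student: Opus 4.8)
The plan is to deduce every bullet from the first one, which records the interaction of the pairing $h$ with the $\Z[\v^d,\v^{-d}]$-module structure on $K(\wt{\cal C})$. By $\Z$-bilinearity of $h$ it suffices to prove the first identity for monomials $f=\v^{nd}$, $g=\v^{md}$. Since $\langle d\rangle$ is an auto-equivalence one has, for all $i$ and $k$, $\grExt^i_{kd}(M\langle nd\rangle,N\langle md\rangle)=\Ext^i_{\wt{\cal C}}(M\langle(n+k)d\rangle,N\langle md\rangle)\cong\Ext^i_{\wt{\cal C}}(M\langle(n+k-m)d\rangle,N)=\grExt^i_{(n+k-m)d}(M,N)$; reindexing the graded dimension gives $\grdim\grExt^i(M\langle nd\rangle,N\langle md\rangle)=\v^{(m-n)d}\cdot\grdim\grExt^i(M,N)$, and summing the signed contributions over the finitely many nonzero $i$ yields $h(\v^{nd}M,\v^{md}N)=\v^{(m-n)d}\,h(M,N)=(\v^{nd})^*\,\v^{md}\,h(M,N)$, as desired.

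For the second bullet, expand $M=\sum_j[M:L_j]\,L_j$ and apply the first bullet with $f=1$: $h(P_i,M)=\sum_j[M:L_j]\,h(P_i,L_j)=[M:L_i]$. Symmetrically, for the third bullet expand $M=\sum_i[M:\Delta_i]\,\Delta_i$ and apply the first bullet with $g=1$: $h(M,\nabla_j)=\sum_i[M:\Delta_i]^*\,h(\Delta_i,\nabla_j)=[M:\Delta_j]^*$, and since $(\_)^*$ is an involution on $\Z[\v^d,\v^{-d}]$ this is the claim. The fifth bullet is then immediate: the third bullet applied to $M=P_i$ gives $[P_i:\Delta_j]=h(P_i,\nabla_j)^*$, while the second applied to $M=\nabla_j$ gives $[\nabla_j:L_i]=h(P_i,\nabla_j)$, so $[P_i:\Delta_j]=h(P_i,\nabla_j)^*=[\nabla_j:L_i]^*$.

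For the fourth bullet one notes that the contravariant equivalence $(\_)^*$ induces an automorphism of the abelian group $K(\wt{\cal C})$ which is antilinear over $\Z[\v^d,\v^{-d}]$, because $(\v^d[M])^*=[M\langle d\rangle^*]=[M^*\langle -d\rangle]=\v^{-d}[M^*]$. Applying this automorphism to $M=\sum_i[M:L_i]\,L_i$ yields $[M^*]=\sum_i[M:L_i]^*\,[L_i^*]$; since an equivalence carries a basis to a basis, comparing coefficients gives $[M^*:L_i^*]=[M:L_i]^*$.

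The one point needing a little care is the bookkeeping of the degree shift in the first bullet, together with the observation that all the graded dimensions occurring are genuine Laurent polynomials — that is, each $\grExt^i$ is finite-dimensional and vanishes for $i\gg0$; both follow from the standing assumptions on $\wt{\cal C}$ (finite-dimensional $\Hom$-spaces, objects of finite length, finite cohomological dimension). The remaining manipulations take place entirely in the graded free module $K(\wt{\cal C})$ and are purely formal.
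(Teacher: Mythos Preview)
Your proposal is correct: each bullet is verified cleanly, with the first one supplying the sesquilinearity of $h$ and the rest following by expanding in the given bases (the fourth via the induced antilinear automorphism of $K(\wt{\cal C})$). This is exactly the kind of routine check the paper has in mind when it declares the proof ``straightforward''; you have simply written it out in full.
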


\subsection{Gradings from geometry}
We will work in a more geometric situation from now on. Let $X_0$ be a stratified variety over $\F_q$. 
Following \cite{BGS}, we would like to turn the functor
$$\cal P_{\Lambda}(X_0,\K)\rar \cal P_{\Lambda}(X,\K)$$
into a grading.

\begin{ex}
Let $X_0=pt_0:=\spec \F_q$. Then $\cal P_{\Lambda}(pt_0,\K)$
is equivalent to the category of finite-dimensional $\K$-vector spaces equipped with an automorphism, 
which can be represented by a matrix with entries in $\O$.
The functor 
$$\cal P_{\Lambda}(pt_0,\K)\rar \cal P_{\Lambda}(pt,\K)$$
is the forgetful functor to vector spaces. Observe that we don't have a grading for two reasons:
\begin{itemize}
\item There are far too many possible eigenvalues.
\item Nontrivial Jordan normal forms give rise to extensions and hence $\grExt^1(\cal F_0,\cal G_0)$ is bigger then $\Ext^1(\cal F,\cal G)=0$.
\end{itemize}
In order to fix the first problem, we should only allow eigenvalues of the form $q^i$.
To fix the second problem, we should only allow semisimple automorphisms. 
\end{ex}
The generalization of these fixes to varieties $X=\bigsqcup X_\l$ is to 
only allow those mixed perverse sheaves, all of whose weight pieces are semisimple and Tate. 
This notion is however only well behaved if one imposes a purity condition on the variety:
\begin{defi}\label{condTate}
Let $X_0=\bigsqcup X_{\l,0}$ be a stratification. 
We say that it satisfies the $\BGS$-condition or that it is $\BGS$, if 
for all $i\in \Z$ and $\l, \mu \in \Lambda$ the sheaf $\cal \H^i(l_\mu^* {l_\l}_{!*}\cal \K [d_\l])$
vanishes if $i+d_\l$ is odd and is isomorphic to a direct sum of copies of $\K(\frac{-d_\l-i}{2})$ if $i+d_\l$ is even.
\end{defi}
We will often abuse language and say that $X=\bigsqcup X_\l$ is $\BGS$, leaving $X_0=\bigsqcup X_{\l,0}$
implicit.

\begin{ex}
By \cite[4.4.3]{BGS}  the full flag variety is $\BGS$. Using the
same techniques one can check that partial flag varieties are also $\BGS$.
\end{ex}

Let $X_0$ be an acyclically stratified variety satisfying the $\BGS$-condition. 
Denote by $\cal P^{\BGS}:=\cal P^{\BGS}_{\Lambda}(X_0,\K)$ the category of mixed perverse sheaves on $X_0$ with the property, 
that the graded pieces of the weight filtration $\Gr^W_i$ are semisimple and Tate. More precisely this means, that they are
isomorphic to a direct sum of $\IC_{\l,0}(\frac{-d_\l-i}{2})$ for even $d_\l+i$.
We will often denote objects of $\cal P^{\BGS}$ by $\cal F^{\BGS},\cal G^{\BGS},\ldots$ and their base-change to $X$ by $\cal F,\cal G,\ldots$.

\begin{thm}\label{thmBGSMAIN}
Let $X_0$ be an acyclically stratified variety satisfying the $\BGS$-condition. Then the functor
 $$\cal P^{\BGS}_{\Lambda}(X_0,\K)\rightarrow \cal P_{\Lambda}(X,\K)$$
is a Koszul grading with degree $-2$ Tate-twist given by $\langle -2 \rangle:= \K(1)\otimes \_$ the usual Tate-twist.
\begin{proof}
 \cite[4.4.4]{BGS}.
\end{proof}
\end{thm}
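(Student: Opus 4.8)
The plan is to check, one at a time, the clauses in the definition of a Koszul grading for the base-change functor $v:\cal P^{\BGS}_{\Lambda}(X_0,\K)\to\cal P_{\Lambda}(X,\K)$, following the strategy of \cite{BGS}. The only genuinely geometric input is \emph{purity}: Gabber's theorem that intersection cohomology complexes are pure, sharpened by the $\BGS$-condition \ref{condTate} so that the relevant cohomology is not merely pure but pure \emph{Tate}. Everything else is formal bookkeeping with weights, once the dictionary between mixed categories and graded algebras is in place. Throughout I use the realization equivalences of \ref{thmRealEquiv} to compute $\Ext^i$ in the perverse categories as $\Hom$'s in the constructible derived categories, and I normalize so that $\IC_{\l,0}$ is pure of weight $d_\l$ and $\K(1)$ has weight $-2$.

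\emph{The soft part.} By purity the irreducible objects of $\cal P^{\BGS}$ are exactly the Tate twists $\IC_{\l,0}(m)$, to which one assigns the weight $d_\l-2m$. The vanishing $\Ext^1_{\cal P^{\BGS}}(M,N)=0$ for $w(M)\le w(N)$ follows from the standard weight estimate for mixed sheaves: such an extension class lies in the Frobenius-fixed (weight-zero) part of the corresponding $\Ext^1$ in $\cal P_{\Lambda}(X,\K)$, and an $\Ext^1$ from a sheaf of weights $\le a$ to one of weights $\ge b$ has all Frobenius weights $\ge b-a+1>0$. Thus $\cal P^{\BGS}$ is a mixed category; $\langle -2\rangle:=\K(1)\otimes\_$ is a degree $-2$ Tate twist; and $v$ is exact and faithful, comes equipped with the tautological isomorphism $v\cong v\circ\langle -2\rangle$, and sends a semisimple object (a sum of $\IC_{\l,0}(m)$'s) to a sum of the simple sheaves $\IC_\l^\K$. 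Hence $v$ is a degrading functor, and it is essentially surjective onto $\Irr\cal P_{\Lambda}(X,\K)$ since every simple perverse sheaf is some $\IC_\l^\K=v(\IC_{\l,0})$ with $\IC_{\l,0}\in\cal P^{\BGS}$ by \ref{condTate}.

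\emph{The heart: purity of the $\Ext$-algebra.} The grading isomorphism and Koszulity both follow from the key assertion that for all $\l,\mu,i$ the space
$$\Ext^i_{\D^b_\Lambda(X,\K)}(\IC_\l,\IC_\mu)$$
is pure \emph{Tate} of weight $d_\mu-d_\l+i$. Purity here comes from Gabber's theorem together with the decomposition theorem applied to a resolution of $\ol X_\mu$, which exhibits $\IC_\mu$ as a summand of the (pure) direct image of a constant sheaf from a smooth projective variety and reduces the computation to cohomology of such a variety; Tate-ness is then forced stratum by stratum by \ref{condTate}, using that the strata are acyclic. Granting this, the Frobenius-weight filtration on $\End(P^\K)$ (equivalently on $\bigoplus_{\l,\mu}\Hom(P_\l^\K,P_\mu^\K)$, through a mixed lift) is pure, hence splits into a $\Z$-grading, and the dictionary of \cite[Ch.~4]{BGS} between mixed categories and graded algebras identifies $\cal P^{\BGS}$ with $\grmod\-\End(P^\K)$ for this grading. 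In particular $\Ext^i_{\cal P^{\BGS}}(\IC_{\l,0},\IC_{\mu,0}(n))$ is the Frobenius-fixed part of $\Ext^i_{\D^b_\Lambda(X,\K)}(\IC_\l,\IC_\mu)(n)$, which by the purity assertion equals all of $\Ext^i_{\D^b_\Lambda(X,\K)}(\IC_\l,\IC_\mu)$ when $n=(d_\mu-d_\l+i)/2$ and vanishes otherwise. Summing over $n$ then gives the grading isomorphism $\bigoplus_n\Ext^i_{\cal P^{\BGS}}(M,N\langle -2n\rangle)\;\xrightarrow{\ \sim\ }\;\Ext^i_{\cal P_\Lambda(X,\K)}(vM,vN)$ (first for simple $M,N$, then in general by dévissage on finite-length objects); and since the unique contributing $n$ satisfies $w(\IC_{\mu,0}(n))+i=d_\mu-2n+i=d_\l=w(\IC_{\l,0})$, the vanishing for every other $n$ is precisely the Koszul condition.

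\emph{Main obstacle.} The soft part is routine. The difficulty is concentrated in the Tate-purity statement for $\Ext^i(\IC_\l,\IC_\mu)$: one must combine Gabber's purity theorem, the decomposition theorem, and the $\BGS$-condition to obtain not just purity but purity of the correct \emph{Tate} type, and one must be careful that the $\Ext$-groups computed in $\cal P^{\BGS}$ are the Frobenius-invariant (weight-zero) parts of the geometric $\Ext$-groups, and are genuinely smaller than the $\Ext$-groups in the ambient category $\cal P_{\Lambda}(X_0,\K)$ --- the difference being exactly the extensions carrying non-Tate Jordan blocks in the Frobenius action, which the definition of $\cal P^{\BGS}$ throws away. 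Once purity is in hand, matching weights yields Koszulity immediately.
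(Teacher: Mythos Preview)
Your sketch is essentially correct and follows the strategy of \cite{BGS}, which is exactly what the paper invokes (the paper's proof is the bare citation \cite[4.4.4]{BGS}). You have correctly isolated the crux: Tate-purity of $\Ext^i(\IC_\l,\IC_\mu)$, from which both the grading isomorphism and Koszulity follow by the formal bookkeeping you describe. Your remark that $\Ext$ in $\cal P^{\BGS}$ is the Frobenius-invariant part, and is strictly smaller than $\Ext$ in $\cal P_\Lambda(X_0,\K)$ because non-semisimple Frobenius extensions are discarded, is exactly the subtle point one must get right.

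One quibble on the justification of purity. You appeal to a resolution of $\ol X_\mu$ and the decomposition theorem. For partial flag varieties (the case \cite{BGS} actually treats) Bott--Samelson resolutions are available and this works. But the theorem here is stated for an arbitrary acyclically stratified variety satisfying the $\BGS$-condition, where resolutions need not exist. The clean argument in this generality uses the $\BGS$-condition \ref{condTate} directly: pointwise Tate-purity of $l_\nu^*\IC_{\l,0}$ (hence by Verdier duality of $l_\nu^!\IC_{\mu,0}$), together with acyclicity of the strata, forces each term of the stratification spectral sequence computing $\Ext^\bullet(\IC_\l,\IC_\mu)$ to be pure Tate of the expected weight; parity of the weights then kills the differentials. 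You gesture at this (``stratum by stratum by \ref{condTate}''), so the idea is there, but the resolution step is superfluous and, as stated, not available in the asserted generality.
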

In particular the Grothendieck group $K(\cal P^{\BGS})$ becomes a graded 
free $\Z[\q,\q^{-1}]$-module, where we use the notation 
$$\q:=\v^{2}$$
In this geometric context one can see the grading on the $\Hom$-spaces directly in terms of Frobenius eigenvalues:
\begin{lem}\label{LemGradingFrobCorres}
 Let $\cal F_0,\cal G_0 \in \cal P^{\BGS}$. Then there is a canonical identification 
$$\grHom(\cal F_0,\cal G_0)=\Hom(\cal F,\cal G)$$
which identifies the degree $k$-space on the left hand side with the $q^{k/2}$ Frobenius-eigenspace on the right hand side.
In particular the Frobenius operation on $\Hom_{\cal P}(\cal F,\cal G)$ is semisimple and all eigenvalues are of the form $q^k$ for some $k\in \Z$.
\begin{proof}
Indeed using \cite[5.1.2.5]{BBD}, we compute:
\begin{align*}
\grHom(\cal F_0,\cal G_0)_{-2n} &= \Hom_{\P^{\BGS}}(\cal F_0 \langle -2n \rangle,\cal G_0) \\
&=\Hom_{\P_\Lambda(X_0,\K)}(\cal F_0 (n),\cal G_0) \\
&=(\Hom(\cal F,\cal G)(-n))^{\Fr} \\
&=\{f \in \Hom(\cal F,\cal G)| \Fr.f =q^{-n} \cdot f \}
\end{align*}
Here $(\_)^{\Fr}$ stands for taking Frobenius invariants.
Hence we have an injection $\grHom(\cal F_0,\cal G_0) \inj \Hom(\cal F,\cal G)$ which is a bijection
by dimension-coincidence.
\end{proof}
\end{lem}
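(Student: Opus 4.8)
The plan is to unwind the definitions of $\grHom$ and of the Tate twist and then feed everything into Theorem \ref{thmBGSMAIN}. First I would record the two bookkeeping facts: that $\cal P^{\BGS}_\Lambda(X_0,\K)$ is by construction a \emph{full} subcategory of $\cal P_\Lambda(X_0,\K)$, and that the degree $-2$ Tate twist is literally the geometric one, $\langle -2\rangle = \K(1)\otimes\_$, so that its $n$-fold iterate $\langle -2n\rangle$ is the twist $(n)$. Hence for each $n\in\Z$ one has
$$\grHom(\cal F_0,\cal G_0)_{-2n}=\Hom_{\cal P^{\BGS}}(\cal F_0\langle -2n\rangle,\cal G_0)=\Hom_{\cal P_\Lambda(X_0,\K)}(\cal F_0(n),\cal G_0).$$

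Next I would descend from $X_0$ to $X$. By \cite[5.1.2.5]{BBD} a morphism of perverse sheaves over $\F_q$ is the same datum as a Frobenius-equivariant morphism over $\ol\F_q$, i.e. $\Hom_{X_0}(\cal A_0,\cal B_0)=\Hom_X(\cal A,\cal B)^{\Fr}$. Applying this with $\cal A_0=\cal F_0(n)$ and $\cal B_0=\cal G_0$, and using $\Hom_X(\cal F(n),\cal G)=\Hom_X(\cal F,\cal G)(-n)$ together with the fact that geometric Frobenius on $\K(-n)$ is multiplication by $q^{n}$, I obtain that $\grHom(\cal F_0,\cal G_0)_{-2n}$ is precisely the $q^{-n}$-eigenspace of $\Fr$ on $\Hom_X(\cal F,\cal G)$. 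This already gives, for every even $k$, a canonical identification of the degree-$k$ piece with the $q^{k/2}$-eigenspace, and in particular a canonical injection $\grHom(\cal F_0,\cal G_0)\inj\Hom(\cal F,\cal G)$, since eigenspaces for distinct eigenvalues are linearly independent.

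It remains to see this injection is onto, equivalently that $\Fr$ acts semisimply on $\Hom_{\cal P}(\cal F,\cal G)$ with eigenvalues among the $q^k$. For this I would invoke that $v\colon\cal P^{\BGS}\to\cal P_\Lambda(X,\K)$ is a grading (Theorem \ref{thmBGSMAIN}): by the very definition of a grading the natural map $\bigoplus_n\Ext^0_{\cal P^{\BGS}}(\cal F_0,\cal G_0\langle -2n\rangle)\to\Hom_{\cal P_\Lambda(X,\K)}(\cal F,\cal G)$ is an isomorphism, which is exactly the statement that $\bigoplus_k\grHom(\cal F_0,\cal G_0)_k\to\Hom(\cal F,\cal G)$ is bijective. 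Combined with the eigenspace description of each summand this finishes the proof; alternatively one can bypass the grading axiom in degree $0$ and instead count dimensions, $\dim\Hom_X(\cal F,\cal G)=\sum_k\dim\grHom(\cal F_0,\cal G_0)_k$. The proof has no genuine geometric content beyond the cited BGS theorem and \cite[5.1.2.5]{BBD}; the only delicate points are purely normalization-theoretic — pinning down the sign in $\langle -2\rangle=(1)$ so that the eigenvalue emerges as $q^{k/2}$ rather than $q^{-k/2}$, and making sure surjectivity is genuinely supplied by Theorem \ref{thmBGSMAIN} and not tacitly assumed.
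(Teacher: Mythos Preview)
Your proof is correct and follows essentially the same line as the paper's: unwind the definition of $\grHom$, identify $\langle -2n\rangle$ with the geometric Tate twist $(n)$, apply \cite[5.1.2.5]{BBD} to get the $q^{-n}$-eigenspace, and then argue surjectivity. The only difference is that where the paper says tersely ``bijection by dimension-coincidence'', you spell out that this coincidence is exactly the $i=0$ case of the grading axiom supplied by Theorem~\ref{thmBGSMAIN}; this is a welcome clarification rather than a different approach.
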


\begin{rem}
 The category $\cal P^{\BGS}_{\Lambda}(X_0,\K)$ is closed under duality and contains incarnations of the usual suspects
which are unique up to isomorphism (after normalization to a suitable weight):
\begin{enumerate}
\item We have 
$$\IC^{\BGS}_\l:=\IC_{\l,0}\in \cal P^{\BGS}_{\Lambda}(X_0,\K)$$
\item There exists a unique lift of $P_\l\surj \IC_\l$ to a projective cover in $\cal P^{\BGS}_{\Lambda}(X_0,\K)$:
$$P_\l^{\BGS}\surj \IC_{\l}^{\BGS}$$ 
\item There exists a unique lift of $\Delta_\l\surj \IC_\l$ to a map 
$$\Delta^{\BGS}_\l\surj \IC_{\l}^{\BGS}$$
\item There exists a unique lift of $\IC_\l\inj \nabla_\l$ to a map 
$$\IC_\l^{\BGS}\inj \nabla_\l^{\BGS}$$
\end{enumerate}
\begin{proof}
\begin{enumerate}
\item True by definition.
\item True by \ref{factsgradings}.
\item Follows after passing to $\ol X_\l$ from the second point.
\item Dual to the third point.
\end{enumerate}
\end{proof}
\end{rem}
In fact we can describe $\Delta^{\BGS}_\l$ more naturally as $l_{\l!} \K[d_\l]$:
\begin{prop}\label{PropVermaInBGS}
 We have $\Delta_{\l,0}=\Delta^{\BGS}_\l$
\begin{proof}
 Since closed immersions commute with the weight filtration, we may assume that $X_\l$ is open.
Since a map $\cal F_0\rar \cal G_0$ is an isomorphism if and only if the induced map 
$\cal F \rar \cal G$ is one, we only need to show that the natural map
$$\Hom(\Delta_{\l,0},\Delta^{\BGS}_\l) \rightarrow \Hom(\Delta_\l,\Delta_\l)$$
is an isomorphism. This map is always injective and hence it suffices to check that both sides have the same dimension. 
The right hand side is one dimensional and for the left hand side we compute:
\begin{align*}
 \Hom(\Delta_{\l,0},\Delta^{\BGS}_\l)&=\Hom(\K[d_\l],j^! \Delta^{\BGS}_\l) \\
&=\Hom(\K[d_\l],\K[d_\l]) \\
&=\K
\end{align*}
\end{proof}
\end{prop}

\subsection{Koszulity of $\End(P^{\BGS})$}
Consider the category $\cal P^{\BGS}$. We want to rephrase \ref{thmBGSMAIN} into the statement, that 
$\cal P^{\BGS}$ is equivalent to the category of finitely generated modules
over a Koszul ring. Strictly speaking, this is wrong however. For example the latter category admits a degree $1$ Tate-twist, while 
the former only admits a degree $-2$ Tate-twist.

One can overcome this normalization problem by general nonsense. Indeed one can adjoin a square root of the Tate-twist to $\cal P^{\BGS}$, see \cite[below 4.1.4]{BGS}.
We denote the resulting grading by 
$$\wt \P:=(\P^{\BGS})^{\frac{1}{2}} \rar \P$$
 It is a Koszul grading with degree $1$ Tate-twist, in particular its Grothendieck group becomes a module over $\Z[\v,\v^{-1}]$.
%and there is still a duality functor on $\mathbb D:\wt \P \rar \wt \P$ which commutes with the forgetful functor and satisfies
%$$\mathbb D \circ \langle k \rangle =\langle -k \rangle \circ \mathbb D$$
The category $\wt \P$ has the bookkeeping advantage, that there exist lifts of important objects
``normalized to weight 0'':

\begin{lem}\label{LemKeyPos}
\begin{enumerate}
\item $\wt { \IC_{\l}}:=\IC_\l^{\BGS} \langle -d_\l \rangle$  is pure of weight $0$.
\item $\wt { P_{\l}}:=P_\l^{\BGS} \langle -d_\l \rangle$ is of weight $\leq 0$. Even better,
there is a short exact sequence 
$$0\rightarrow \ker \rightarrow \wt{P_\l}\rightarrow \wt{\IC_\l}\rightarrow 0$$
such that $\ker$ is of weight $<0$.
\item $\wt { \Delta_{\l}}:=\Delta_\l^{\BGS} \langle -d_\l \rangle$ is of weight $\leq 0$. Even better,
there is a short exact sequence 
$$0\rightarrow \ker \rightarrow \wt{\Delta_\l}\rightarrow \wt{\IC_\l}\rightarrow 0$$
such that $\ker$ is of weight $<0$.
\item $\wt { \nabla_{\l}}:=\nabla_\l^{\BGS} \langle -d_\l \rangle$ is of weight $\geq 0$. Even better,
there is a short exact sequence 
$$0\rightarrow \wt{\IC_\l} \rightarrow \wt{\nabla_\l}\rightarrow \cok \rightarrow 0$$
such that $\cok$ is of weight $>0$.
\end{enumerate}
\begin{proof}
 The first assertion is clear. The second is a consequence of \ref{LemProjWeightRad}. 
The third is a special case of the second, since $\Delta_\l$ becomes projective after passage to a smaller variety.
The fourth point is dual to the third.
\end{proof}
\end{lem}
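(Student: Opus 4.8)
The plan is to take the four assertions in turn, the first and fourth being quick reductions and the real content sitting in the second.

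For (1): $\IC_\l^{\BGS}=\IC_{\l,0}=\ol l_{\l\,!*}(\K[d_\l])$ is the intermediate extension of $\K[d_\l]$ on the smooth stratum $X_{\l,0}$, which is pure of weight $d_\l$, so by Gabber's purity theorem \cite[5.3.2]{BBD} it is itself pure of weight $d_\l$; since the half Tate-twist $\langle-1\rangle$ on $\wt\P$ lowers all weights by $1$ and preserves purity, $\wt{\IC_\l}=\IC_\l^{\BGS}\langle-d_\l\rangle$ is pure of weight $0$. For (2) I would first isolate an auxiliary fact about the Koszul grading $\wt\P\to\P$ of \ref{thmBGSMAIN}: \emph{if $Q\surj L$ is the projective cover of a simple object $L$ with $w(L)=0$, then $Q$ has all weights $\leq0$, has head $L$, and $\mathrm{rad}\,Q=\ker(Q\surj L)$ has all weights $\leq-1$.} The proof: since $\wt\P$ is the category of finitely generated graded modules over a Koszul algebra (\ref{thmBGSMAIN} together with the structure theory of gradings \cite[4.1.6]{BGS}), the $i$-th radical layer $\mathrm{rad}^iQ/\mathrm{rad}^{i+1}Q$ is semisimple and a simple $N$ occurs in it precisely when $\Ext^i_{\wt\P}(L,N)\neq0$, which by the Koszul condition forces $w(N)=w(L)-i=-i$; hence $Q$ lives in weights $0,-1,-2,\dots$, its head is the weight-$0$ part $L$, and $\mathrm{rad}\,Q$ lives in weights $\leq-1$. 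Applying this to $Q=\wt{P_\l}$, which by \ref{factsgradings} is the projective cover of the simple object $\wt{\IC_\l}$ (of weight $0$ by (1)), yields (2) with $\ker:=\mathrm{rad}\,\wt{P_\l}$.

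For (3) I would reduce to (2) over a smaller variety. By \ref{PropVermaInBGS} we have $\Delta_\l^{\BGS}=\Delta_{\l,0}=l_{\l\,!}(\K[d_\l])$. Pass to the closed subvariety $\ol X_\l$, on which $X_\l$ is the unique open stratum; by \ref{constrproj} the standard object of an open stratum is its own indecomposable projective, hence (by uniqueness of mixed lifts) $\wt{\Delta_\l}=\wt{P_\l}$ when formed over $\ol X_\l$, so (2), applied with $\ol X_\l$ in place of $X$, produces over $\ol X_\l$ a short exact sequence $0\to\ker\to\wt{\Delta_\l}\to\wt{\IC_\l}\to0$ with $\ker$ of weight $<0$ and $\wt{\Delta_\l}$ of weights $\leq0$. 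Pushing forward along the closed immersion $\ol l_\l\colon\ol X_\l\inj X$ finishes the job: that functor is t-exact for the perverse t-structure, strictly compatible with weight filtrations, and carries the $\ol X_\l$-objects $\wt{\Delta_\l}$, $\wt{\IC_\l}$ to their namesakes on $X$ (cf. the proof of \ref{PropVermaInBGS}), so the sequence transports. (The bare statement that $\wt{\Delta_\l}$ has weights $\leq0$ also falls out directly, since $l_{\l\,!}(\K[d_\l])$ has weights $\leq d_\l$.)

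Finally (4) is Verdier duality applied to (3). The category $\cal P^{\BGS}$ is stable under $\D$, and $\D$ negates weights, $w(\D M)=-w(M)$, and satisfies $\D\langle d\rangle\cong\langle-d\rangle\D$; a short computation on the smooth stratum gives $\D\Delta_{\l,0}\cong\nabla_{\l,0}\langle-2d_\l\rangle$ and $\D\IC_{\l,0}\cong\IC_{\l,0}\langle-2d_\l\rangle$, hence after the $\langle-d_\l\rangle$-normalization $\D\wt{\Delta_\l}\cong\wt{\nabla_\l}$ and $\D\wt{\IC_\l}\cong\wt{\IC_\l}$. Applying the contravariant exact functor $\D$ to the sequence of (3) then gives $0\to\wt{\IC_\l}\to\wt{\nabla_\l}\to\cok\to0$ with $\cok=\D\ker$ of weight $>0$, which is (4). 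The one genuinely substantial step will be the auxiliary fact feeding (2) — in effect, that (after normalization) the weight filtration of a mixed projective in $\cal P^{\BGS}$ coincides with its radical (equivalently, grading) filtration; granting that, (1), (3), (4) are formal, resting respectively on a purity input, a reduction to the open-stratum case over $\ol X_\l$, and Verdier duality.
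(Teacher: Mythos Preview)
Your overall strategy matches the paper's: (1) is immediate, (3) reduces to (2) on $\ol X_\l$, and (4) follows by duality. For (2) the paper simply invokes \ref{LemProjWeightRad} (coincidence of radical and weight filtrations on $P_\l^{\BGS}$), which in turn is deferred to the explicit construction in \cite[proof of 4.4.8]{BGS}; you instead try to derive this coincidence from the Koszul property supplied by \ref{thmBGSMAIN}. That is a legitimate and somewhat more conceptual alternative, but your justification contains an error.

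You claim that a simple $N$ appears in $\mathrm{rad}^iQ/\mathrm{rad}^{i+1}Q$ precisely when $\Ext^i_{\wt\P}(L,N)\neq0$. This is false already for the Koszul algebra $A=k[x]/(x^2)$ with $L=k$ and $Q=A$: one has $\Ext^i(k,k)\neq0$ for every $i\geq0$, yet $\mathrm{rad}^2 Q=0$. The number $\dim\Ext^i(L,N)$ counts copies of the projective cover of $N$ in the $i$-th term of the \emph{minimal projective resolution} of $L$, not simple constituents of the $i$-th radical layer of the projective cover of $L$. The correct (and simpler) argument is: a Koszul ring is quadratic, hence generated over $A_0$ in degree $-1$ in the paper's sign convention, so $\mathrm{rad}^i A=(A_{<0})^i=A_{\leq -i}$ and therefore $\mathrm{rad}^i P_\l/\mathrm{rad}^{i+1}P_\l=e_\l A_{-i}$ is concentrated in internal degree $-i$, forcing every simple constituent to have weight $-i$. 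With this repair your argument for (2) goes through, and the remaining parts are fine.
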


\begin{lem}\label{LemProjWeightRad}
 The radical and the weight filtration on the projective cover $P_\l^{\BGS}\surj \IC_\l^{\BGS}$ coincide.
\begin{proof}
 This is a consequence of the explicit construction of $P_\l^{\BGS}$ in \cite[proof 4.4.8]{BGS}.
\end{proof}
\end{lem}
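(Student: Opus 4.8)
The plan is to unwind the recursive construction of $P_\l^{\BGS}$ (the weighted analogue of Construction \ref{constrproj}, carried out in \cite[proof 4.4.8]{BGS}) and compare the two filtrations layer by layer. Recall that both the radical filtration and the weight filtration are canonical decreasing filtrations on $P := P_\l^{\BGS}$ with semisimple subquotients; I want to show that the $i$-th radical layer $\mathrm{rad}^i P / \mathrm{rad}^{i+1} P$ agrees with the weight layer $W_{-i}P / W_{-i-1}P$ (with the normalization $P$ pure of the top weight, say weight $d_\l$, on its head). Since $\wt P_\l$ has weights $\le 0$ with the head $\wt{\IC_\l}$ sitting in weight $0$ (this is exactly the refined statement of Lemma \ref{LemKeyPos}(2) that we are feeding into here, so I cannot use it — instead I prove it directly), it suffices to establish the two inequalities $\mathrm{rad}^i P \subseteq W_{-i}P$ and $W_{-i}P \subseteq \mathrm{rad}^i P$ and conclude by a dimension/length count, or equivalently to show each is contained in the other.

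First I would record the easy inclusion. The head $P/\mathrm{rad}\,P = \IC_\l^{\BGS}$ is pure of weight $0$ by construction, so $\mathrm{rad}\,P \subseteq W_{-1}P$; and since $\mathrm{rad}$ of a module is generated by $\mathrm{rad}(\End)\cdot P$ together with the fact that morphisms between the relevant graded objects strictly drop weight (here is where the Koszulity/purity of $\cal P^{\BGS}$ enters: $\grExt^0$ and $\grExt^1$ between simples are concentrated in a single weight by Theorem \ref{thmBGSMAIN}), an induction gives $\mathrm{rad}^i P \subseteq W_{-i}P$ for all $i$. Concretely, if $M \subseteq P$ has weights $\le -i$, then $\mathrm{rad}\,M$ has weights $\le -i-1$, because any simple subquotient of $M$ of weight $-i$ lies in $M/\mathrm{rad}\,M$: an extension of a weight-$(-i)$ simple by something of weight $\le -i$ whose class is nonzero in $\mathrm{rad}$ would force a nonzero $\grExt^1$ between simples of equal weight, contradicting the mixedness condition $\Ext^1(N,N')=0$ for $w(N)\le w(N')$.

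For the reverse inclusion $W_{-i}P \subseteq \mathrm{rad}^i P$ I would argue by induction on the number of strata using the defining short exact sequence of the weighted projective, the analogue of \eqref{EqnProjDefSeq}:
\begin{equation*}
0 \to E^* \otimes \Delta_{\mu,0}^{\BGS} \to P_{\l}^{\BGS} \to P_{Y,\l}^{\BGS} \to 0,
\end{equation*}
where $E = \Ext^1(P_{Y,\l}^{\BGS}, \Delta_{\mu}^{\BGS})$ is concentrated in a single weight by purity. On $Y$ the radical and weight filtrations of $P_{Y,\l}^{\BGS}$ agree by the inductive hypothesis; on the kernel $E^*\otimes\Delta_{\mu,0}^{\BGS}$ one uses that $\Delta_{\mu,0}^{\BGS}=\Delta_{\mu,0}$ by Proposition \ref{PropVermaInBGS}, whose weight and radical filtrations also coincide (a standard fact for standard objects on cells, provable directly since $\Delta_\mu$ restricted to $\ol X_\mu$ is projective). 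One then checks that the extension class, being the ``identity'' in $E^*\otimes E$ and hence of pure weight matching the jump, glues the two filtrations compatibly: the $i$-th weight layer of $P$ is the extension of the $i$-th weight layer of $P_{Y,\l}^{\BGS}$ by the appropriate piece of $E^*\otimes\Delta_\mu$, and the same recursion describes the radical layers once one knows the extension is ``as non-split as possible'' in each degree, which is precisely the content of the explicit BGS construction.

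The main obstacle will be this last gluing step: verifying that the canonical extension defining $P_\l^{\BGS}$ does not split off any simple subquotient prematurely, i.e. that the radical filtration really is as deep as the weight filtration forces it to be and no deeper. This is where one genuinely needs the explicit description in \cite[proof 4.4.8]{BGS} rather than soft arguments — soft arguments give $\mathrm{rad}^i \subseteq W_{-i}$ for free, but the opposite inclusion is equivalent to a minimality/indecomposability property of the construction. I expect the cleanest route is to invoke Koszulity of $\End(P^{\BGS})$ (Theorem \ref{thmBGSMAIN}) to deduce that the graded endomorphism ring has its radical in strictly negative internal degree with the $i$-th radical power in degree $\le -i$, and then transport this to $P$ via the equivalence of Theorem \ref{thmMoritaProj}; a Koszul ring has its radical filtration equal to its grading filtration by definition, and the weight filtration on $P$ is exactly the one induced by this grading, so the two coincide.
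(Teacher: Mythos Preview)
Your argument is essentially correct and supplies what the paper omits (the paper proves nothing here, merely pointing to \cite[proof 4.4.8]{BGS}). The easy inclusion $\mathrm{rad}^i P \subseteq W_{-i}P$ indeed follows from mixedness alone, and your final route through Koszulity is the right way to close the reverse inclusion --- this is in effect what happens inside BGS.

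One caution on circularity: you write ``Koszulity of $\End(P^{\BGS})$ (Theorem \ref{thmBGSMAIN})'', but Koszulity of the \emph{ring} is Proposition \ref{PropPBGSkoszul}, whose proof in this paper uses (via \ref{LemKeyPos}) the very lemma you are proving. What \ref{thmBGSMAIN} gives you directly is only that the \emph{grading} $\cal P^{\BGS}\to\cal P$ is Koszul. To pass from this to the ring statement without circularity you must independently verify that $\grEnd(\wt P)$ is non-positively graded with semisimple degree-zero part. This is easy but should be said: since $\wt P_\mu$ has simple head $\wt{\IC}_\mu$ of weight $0$, mixedness forces its top weight-graded piece $\Gr^W_0\wt P_\mu$ (a semisimple quotient) to equal $\wt{\IC}_\mu$, so all other composition factors have weight $<0$; hence $\grHom(\wt P_\l,\wt P_\mu)$ sits in degrees $\le 0$ with degree-zero part $\K^{\delta_{\l\mu}}$. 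With this in hand, \ref{thmBGSMAIN} makes $\grEnd(\wt P)$ Koszul, hence quadratic, hence generated in degree $-1$; then $\mathrm{rad}^i A = A_{\le -i}$ (a consequence of generation in degree $-1$, not ``by definition'' as you wrote), and transporting back through Morita gives the reverse inclusion you wanted.
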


\begin{lem}\label{lemWeightsEndP}
  Let $X_0$ be an acyclically stratified variety, such that the $\BGS$-condition holds.
Then we have 
$$\grdim  \Hom(\wt P_\l, \wt P_\mu)=\sum_\nu [\wt \Delta_\nu: \wt{\IC}_\mu][\wt \Delta_\nu:\wt{\IC}_\l] $$
\begin{proof}
 This follows from \ref{lemGradedComputation}:
\begin{align*}
\grdim  \Hom(\wt P_\l, \wt P_\mu)&=[\wt P_\mu:\wt{\IC}_\l]\\
&= \sum_\nu [\wt P_\mu:\wt \Delta_\nu][\wt \Delta_\nu:\wt{\IC}_\l]\\
&=\sum_\nu [\wt \nabla_\nu: \wt{\IC}_\mu]^*[\wt \Delta_\nu:\wt{\IC}_\l] \\
&=\sum_\nu [\wt \Delta_\nu: \wt{\IC}_\mu][\wt \Delta_\nu:\wt{\IC}_\l]
\end{align*}
\end{proof}
\end{lem}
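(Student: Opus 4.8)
The plan is to compute the graded dimension of $\Hom(\wt P_\l,\wt P_\mu)$ purely within the Grothendieck group $K(\wt\P)$, using the machinery of graded multiplicities from \ref{SubsecGradedMult} together with the standard collections of objects available in $\wt\P$. First I would recall that by \ref{thmRealEquiv} and \ref{thmFiniteCohoDim} the category $\wt\P$ has finite cohomological dimension, so the pairing $h(\_,\_)$ of Section \ref{SubsecGradedMult} is defined; since $\wt{P_\mu}$ is projective, all higher $\grExt^i(\wt P_\l,\wt P_\mu)$ vanish and $h(\wt P_\l,\wt P_\mu)=\grdim\Hom(\wt P_\l,\wt P_\mu)$. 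The four lifted collections $\wt{\IC_\nu},\wt{P_\nu},\wt{\Delta_\nu},\wt{\nabla_\nu}$ of \ref{LemKeyPos} give $\Z[\v,\v^{-1}]$-bases of $K(\wt\P)$ (the transition matrices to $(\wt{\IC_\nu})$ being unitriangular with coefficients in $\v^{-1}\Z[\v^{-1}]$, respectively $\v\Z[\v]$, off the diagonal), and one has the biorthogonality relations $h(\wt P_\l,\wt{\IC_\mu})=\delta_{\l\mu}$ and $h(\wt\Delta_\l,\wt\nabla_\mu)=\delta_{\l\mu}$, the latter because $\RHom(l_{\l!}\K[d_\l],l_{\mu*}\K[d_\mu])$ is computed by adjunction to be $\K$ concentrated in degree $0$ when $\l=\mu$ and $0$ otherwise.

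With these bases in hand the computation is the chain of identities already displayed in the statement, each step justified by one bullet of \ref{lemGradedComputation}: expanding $\grdim\Hom(\wt P_\l,\wt P_\mu)=h(\wt P_\l,\wt P_\mu)=[\wt P_\mu:\wt{\IC}_\l]$ by the dual-basis rule for $(\wt P_\nu),(\wt{\IC_\nu})$; then writing $[\wt P_\mu:\wt{\IC}_\l]=\sum_\nu[\wt P_\mu:\wt\Delta_\nu][\wt\Delta_\nu:\wt{\IC}_\l]$ by transitivity of multiplicities through the $\wt\Delta$-basis; then applying the reciprocity identity $[\wt P_\mu:\wt\Delta_\nu]=[\wt\nabla_\nu:\wt{\IC}_\mu]^*$, which is the last bullet of \ref{lemGradedComputation} (graded BGG-reciprocity with the two pairs of dual bases $(\wt P,\wt{\IC})$ and $(\wt\Delta,\wt\nabla)$); and finally replacing $[\wt\nabla_\nu:\wt{\IC}_\mu]^*$ by $[\wt\Delta_\nu:\wt{\IC}_\mu]$. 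This last replacement is the one genuine point of content, and it is where the Koszul/BGS positivity is used: duality $(\_)^*:\wt\P^{op}\to\wt\P$ interchanges $\wt\Delta_\nu$ with $\wt\nabla_\nu$ and fixes $\wt{\IC}_\mu$ (all three are ``normalized to weight $0$'' by \ref{LemKeyPos}), and it sends $\langle d\rangle$ to $\langle -d\rangle$, so the fourth bullet of \ref{lemGradedComputation} gives $[\wt\nabla_\nu:\wt{\IC}_\mu]=[\wt\Delta_\nu:\wt{\IC}_\mu]^*$; combined with the previous line the two stars cancel.

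The main obstacle, such as it is, is bookkeeping rather than mathematics: one must be careful that the objects appearing really do form dual bases with $h=\delta$ on the nose (not merely up to a power of $\v$), which is exactly why the weight-$0$ normalization of \ref{LemKeyPos} and the duality-compatibility $(M\langle d\rangle)^*\cong M^*\langle -d\rangle$ are invoked, and that $\wt\Delta_\nu^*\cong\wt\nabla_\nu$ and $\wt{\IC}_\mu^*\cong\wt{\IC}_\mu$ hold with these normalizations — which follows from \ref{PropVermaInBGS} and Verdier duality together with the self-duality of the category $\cal P^{\BGS}$ noted before \ref{LemKeyPos}. Once these compatibilities are recorded, the four-line display is forced.
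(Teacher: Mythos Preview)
Your proposal is correct and follows exactly the four-step chain of identities that the paper gives, invoking the same lemma (\ref{lemGradedComputation}) at each stage; you simply spell out in more detail why the pairs $(\wt P,\wt{\IC})$ and $(\wt\Delta,\wt\nabla)$ are genuinely dual bases and why Verdier duality swaps $\wt\Delta_\nu\leftrightarrow\wt\nabla_\nu$ while fixing $\wt{\IC}_\mu$ in the weight-$0$ normalization. One small remark: the last step does not really use Koszul positivity --- it is purely a consequence of Verdier self-duality of $\cal P^{\BGS}$ and the normalizations in \ref{LemKeyPos} --- so your comment that ``this is where the Koszul/BGS positivity is used'' slightly overstates what is needed there.
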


\begin{prop}\label{PropPBGSkoszul}
Let $X$ be an acyclically stratified variety satisfying the $\BGS$-condition.
Let $\wt P:= \bigoplus \wt P_\l$. Then $\grEnd(\wt P)$ is $\K$-Koszul. 
\begin{proof}
By \ref{thmBGSMAIN} and \cite[2.1.3]{BGS} we only need to show that
the grading on $\grEnd(\wt P)$ is non-positive and that the degree zero part is isomorphic to the ring $\bigoplus_{\l \in \Lambda} \K$.
We know from \ref{lemWeightsEndP} that 
$$\grdim  \Hom(\wt P_\l, \wt P_\mu)=\sum_\nu [\wt \Delta_\nu: \wt{\IC}_\mu][\wt \Delta_\nu:\wt{\IC}_\l] $$
But now \ref{LemKeyPos} gives us the following information:
\begin{itemize}
\item $[\wt\Delta_\nu:\wt{\IC}_\nu] \in 1+\v^{-1}\cdot \N[\v^{-1}]$
 \item $[\wt\Delta_\nu:\wt{\IC}_\mu] \in \v^{-1}\cdot \N[\v^{-1}]$ for $\nu \neq \mu$.
\end{itemize}
This finishes the proof.
\end{proof}
\end{prop}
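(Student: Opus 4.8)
The plan is to deduce this from the standard characterization of Koszul rings in terms of their graded pieces. By \ref{thmBGSMAIN} the forgetful functor $\wt\P\rar\P$ is a Koszul grading and $\wt P$ is a lift of a projective generator, so $\grEnd(\wt P)$ is precisely a graded ring whose induced grading on modules is the Koszul grading of \ref{thmBGSMAIN}. By \cite[2.1.3]{BGS} it then suffices to verify two purely ring-theoretic hypotheses: that $\grEnd(\wt P)$ is non-positively graded, and that $\grEnd(\wt P)_0\cong\bigoplus_{\l\in\Lambda}\K$ is semisimple.

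Both statements reduce to graded-dimension bookkeeping. First I would apply \ref{lemWeightsEndP} to rewrite
$$\grdim\Hom(\wt P_\l,\wt P_\mu)=\sum_\nu [\wt\Delta_\nu:\wt{\IC}_\mu]\,[\wt\Delta_\nu:\wt{\IC}_\l],$$
reducing everything to the graded multiplicities of the simples $\wt{\IC}$ in the standards $\wt\Delta$. The crucial positivity is supplied by \ref{LemKeyPos}: since $\wt\Delta_\nu$ is normalized so that its quotient $\wt{\IC}_\nu$ sits in weight $0$ while the kernel has weight $<0$, one gets $[\wt\Delta_\nu:\wt{\IC}_\nu]\in 1+\v^{-1}\N[\v^{-1}]$ and $[\wt\Delta_\nu:\wt{\IC}_\mu]\in\v^{-1}\N[\v^{-1}]$ for $\nu\neq\mu$. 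Substituting into the sum, each summand lies in $\N[\v^{-1}]$, hence $\grdim\Hom(\wt P_\l,\wt P_\mu)\in\N[\v^{-1}]$, which is exactly non-positivity of the grading; moreover its constant term counts those $\nu$ for which both $[\wt\Delta_\nu:\wt{\IC}_\mu]$ and $[\wt\Delta_\nu:\wt{\IC}_\l]$ have nonzero constant term, i.e. by the two bullet points those $\nu$ with $\nu=\l=\mu$, giving $\delta_{\l,\mu}$. Thus $\grEnd(\wt P)_0=\bigoplus_\l\K$ and the grading is non-positive, which is all we needed.

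I expect the real content to sit in the inputs rather than in this final assembly: the positivity assertions of \ref{LemKeyPos} rest on \ref{LemProjWeightRad}, i.e. on the coincidence of the radical and weight filtrations on $P_\l^{\BGS}$, and the whole argument depends on the normalizing shifts $\langle -d_\l\rangle$ being compatible with the internal grading on $\grEnd(\wt P)$. The one place where care is needed in the write-up is therefore making these normalizations consistent, so that ``pure of weight $0$'' really does translate into ``concentrated in internal degree $0$''; granting that, the proof is just the short calculation with Laurent polynomials in $\N[\v^{-1}]$ sketched above.
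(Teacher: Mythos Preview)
Your proposal is correct and follows essentially the same route as the paper: reduce via \ref{thmBGSMAIN} and \cite[2.1.3]{BGS} to checking non-positivity and the shape of the degree-zero part, then use \ref{lemWeightsEndP} together with the positivity information from \ref{LemKeyPos} on the multiplicities $[\wt\Delta_\nu:\wt{\IC}_\mu]$. The only difference is that you spell out explicitly why the constant term equals $\delta_{\l,\mu}$, which the paper leaves implicit.
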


\subsection{Lifts of projectives and extension of scalars}
We are now able to lift the projectives $P_\l^\O$ to $X_0$ such that $\K\otimes P^\O_{\l,0}=P^{\BGS}_\l$.
\begin{construction}\label{constrprojQlss}
Let $X_0=\bigsqcup X_{\l,0}$ be an acyclically stratified variety satisfying the $\BGS$-condition and $X_\l$ be a stratum.
Then the construction \ref{constrproj} of $P_\l^{\K} \in \cal P_{\Lambda}(X,\K)$ can be lifted to a construction of
$P_\l^{\BGS} \in \cal P^{\BGS}_{\Lambda}(X_0,\K)$.
\begin{proof}
We just tweak the construction \ref{constrproj}:
If $X_{\l,0}$ is open, we put $P^{\BGS}_\l:=\Delta_{\l,0}$. This is allowed by \ref{PropVermaInBGS}.

If $X_{\l,0}$ is not open, we construct $P^{\BGS}_\l$ again inductively by forming maximal extensions
$$0 \rightarrow E^*\otimes \Delta_{\mu,0} \rightarrow P^{\BGS}_\l \rightarrow P^{\BGS}_{Y,\l}\rightarrow 0$$
But this time we interpret $E=\grExt_{\cal P^{\BGS}}^1(P^{\BGS}_{Y,\l},\Delta_{\mu,0})$ 
as a graded vector space and $E^*\otimes M$ is defined to be a representative of the functor $E^*\otimes \grHom(\_,M)$ 
to graded vector spaces. More explicitly this means that the object $E^*\otimes M$ is a direct sum of shifted copies of 
$M$ and we fix once and for all an iso-transformation of functors valued in graded vectorspaces:
$$E^*\otimes \grHom(\_,M)=\grHom(\_,E^*\otimes M)$$
By definition we have $v P^{\BGS}_\l=P_\l$ hence it is the desired projective cover by \ref{factsgradings}.
\end{proof}
\end{construction}

In the case $\E=\K$ we just constructed some nice lifts to $X_0$. However we do not yet know about the cases 
$\E=\O,\F$. We show that lifts exist in all cases and identify the possible ambiguity.

\begin{construction}\label{constrproj0}
Let $X_0=\bigsqcup X_{\l,0}$ be an acyclically stratified variety satisfying $\IC^\O$-parity and $X_\l$ be a stratum. 
Then the construction \ref{constrproj} of $P_\l^{\E} \in \cal P_{\Lambda}(X,\E)$ can be lifted to a construction of
objects $P^\E_{\l,0} \in \cal P_{\Lambda}(X_0,\E)$.
\begin{proof}
We adjust again the construction \ref{constrproj}:
If $X_{\l,0}$ is open, we put $P_{\l,0}:=\Delta_{\l,0}$. 

If $X_{\l,0}$ is not open, we need to check that the extension class $e$ of 
$$0 \rightarrow E^*\otimes \Delta_\mu \rightarrow P_\l \rightarrow P_{Y,\l}\rightarrow 0$$
admits a preimage under the map 
$$\eta:\Ext^1_{X_0}(P_{Y,\l,0},E^*\otimes \Delta_{\mu,0}) \rightarrow \Ext^1_{X}(P_{Y,\l},E^*\otimes \Delta_\mu)$$
By \cite[(5.1.2.5)]{BBD}, we know that $\eta$ fits into a short exact sequence
$$\Hom_X(P_{Y,\l},E^*\otimes {\Delta_\mu})_{\Fr} \inj \Ext^1_{X_0}(P_{Y,\l,0},E^*\otimes \Delta_{\mu,0}) \surj \Ext^1_{X}(P_{Y,\l},E^*\otimes \Delta_\mu)^{\Fr} $$
Here $(\_)^{\Fr}$ (resp. $(\_)_{\Fr}$) stands for taking Frobenius invariants (resp. coinvariants).
But $e \leftrightarrow id_E$ is Frobenius invariant by construction, hence it can be lifted.
\end{proof}
\end{construction}
Note that the obstruction to uniqueness of $P_{\l,0}$ is $\Hom_X(P_{Y,\l},E^*\otimes {\Delta_\mu})_{\Fr}$.
While this space does not vanish in general, it is at least torsion:
\begin{lem}\label{keylemHomPED}
 Let $X_0$ be an acyclically stratified variety satisfying the $\BGS$-condition. Then 
$$\Hom(P^\K_{Y,\l},E^*\otimes {\Delta_\mu^\K})_{\Fr}=0$$
In particular there are no choices in our construction of $P^\K_{\l,0}$ and hence 
$$P_\l^{\BGS} \cong P^\K_{\l,0}$$
\begin{proof}
Let us write $\cal F$ instead of $\cal F^\K$ for the duration of this proof.
In the case of one stratum (i.e. $Y= \emptyset$) there is nothing to show.

Now assume that the assertions are true in the case of $n$ strata, let 
$X_0$ be stratified by $n+1$ strata, $U_0=X_{\mu,0}$ be an open stratum and $Y_0$ its closed complement.
We need to show that the weights on 
$$\Hom(P_{Y,\l},E^*\otimes {\Delta_\mu})$$
are all different from zero.

By our induction assumption, we already know that $P_{Y,\l,0} \cong P^{\BGS}_{Y,\l}$. Hence there is a short exact sequence by \ref{constrprojQlss}:
$$0 \rightarrow E^*\otimes \Delta_{\mu,0}\rightarrow P^{\BGS}_\l \rightarrow P_{Y,\l,0} \rightarrow 0$$ 

Now it is time to compare the characteristic polynomials $\chi_V=\chi_V(t)$ of Frobenius on various spaces $V$. 
We have that 
\begin{itemize}
\item $\chi_{\End(P_{Y,\l})}$ divides $\chi_{\Hom(P^{\BGS}_\l,P_{Y,\l})}$
\item $\chi_{\Hom(P_{Y,\l},E^*\otimes \Delta_\mu)}$ divides $\chi_{\Hom(P^{\BGS}_\l,E^*\otimes \Delta_\mu)}$
\item $\chi_{\Hom(P^{\BGS}_\l,P_{Y,\l})} \cdot \chi_{\Hom(P^{\BGS}_\l,E^*\otimes \Delta_\mu)} = \chi_{\End(P^{\BGS}_\l)}$ by projectivity.
\end{itemize}
By \ref{LemProjWeightRad}, we know that the top of $P^{\BGS}_\l$ is $\IC_{\l,0}$
and that there occur no other (untwisted!) copies of $\IC_{\l,0}$ in a composition series of $P^{\BGS}_\l$.
Since 
$$\grdim  \grHom(P^{\BGS}_\l,\cal F^{\BGS})=[\cal F^{\BGS}:\IC_{\l,0}]$$
it follows from \ref{LemGradingFrobCorres} that $1$ is an isolated zero of $\chi_{\End(P^{\BGS}_\l)}$.
On the other hand the Frobenius acts also with eigenvalue $1$ on the element $id \in \End(P_{Y,\l})$.
$$
\begin{xy}
 \xymatrix{ & \ar@{-}[ld] \chi_{\End(P^{\BGS}_\l)} \ar@{-}[rd] & \\
\chi_{\Hom(P^{\BGS}_\l,P_{Y,\l})} \ar@{-}[d]& & \chi_{\Hom(P^{\BGS}_\l,E^*\otimes \Delta_\mu)} \ar@{-}[d] \\
\chi_{\End(P_{Y,\l})} \ar@{-}[d] & & \chi_{\Hom(P_{Y,\l},E^*\otimes \Delta_\mu)} \\
t-1 & & 
}
\end{xy}
$$

Hence $1$ is not a zero of $\chi_{\Hom(P_{Y,\l},E^*\otimes \Delta_\mu)}$. In other words $\Fr-1$ acts 
bijectively on 
$\Hom(P_{Y,\l}^\K,E^*\otimes \Delta_\mu^\K)$ and hence $\Hom_X(P^\K_{Y,\l},E^*\otimes {\Delta_\mu^\K})_{\Fr}=0$.
\end{proof}
\end{lem}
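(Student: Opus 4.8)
The plan is to prove Lemma \ref{keylemHomPED} by induction on the number of strata, arguing entirely in characteristic zero via a comparison of Frobenius characteristic polynomials. The base case $Y=\emptyset$ is vacuous, so I would assume the statement for varieties with $n$ strata, take $X_0$ stratified by $n+1$ strata, pick an open stratum $U_0 = X_{\mu,0}$ with closed complement $Y_0$, and use the induction hypothesis $P_{Y,\l,0}\cong P^{\BGS}_{Y,\l}$ to invoke Construction \ref{constrprojQlss} and obtain the short exact sequence $0\to E^*\otimes\Delta_{\mu,0}\to P^{\BGS}_\l\to P_{Y,\l,0}\to 0$.

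The core of the argument is the following divisibility bookkeeping with characteristic polynomials $\chi_V(t)$ of geometric Frobenius. From the short exact sequence and left-exactness of $\Hom(-,\cal G)$ applied to $P_{Y,\l}\surj$-quotients, I get that $\chi_{\End(P_{Y,\l})}$ divides $\chi_{\Hom(P^{\BGS}_\l, P_{Y,\l})}$ and that $\chi_{\Hom(P_{Y,\l}, E^*\otimes\Delta_\mu)}$ divides $\chi_{\Hom(P^{\BGS}_\l, E^*\otimes\Delta_\mu)}$; and from projectivity of $P^{\BGS}_\l$ applied to the same sequence I get the factorization $\chi_{\Hom(P^{\BGS}_\l, P_{Y,\l})}\cdot\chi_{\Hom(P^{\BGS}_\l, E^*\otimes\Delta_\mu)} = \chi_{\End(P^{\BGS}_\l)}$. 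The key input then is that $t=1$ is an \emph{isolated} (simple) zero of $\chi_{\End(P^{\BGS}_\l)}$: this follows from Lemma \ref{LemProjWeightRad} (the top of $P^{\BGS}_\l$ is the untwisted $\IC_{\l,0}$, and no other untwisted copy of $\IC_{\l,0}$ appears in a composition series), combined with $\grdim\grHom(P^{\BGS}_\l,\cal F^{\BGS}) = [\cal F^{\BGS}:\IC_{\l,0}]$ and the eigenvalue/degree dictionary of Lemma \ref{LemGradingFrobCorres}, which translates ``degree $0$ part is one-dimensional, rest is in negative degree'' into ``$1$ is a simple root, all other roots are powers $q^k$ with $k>0$''.

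Now $t=1$ is forced to occur on the $\chi_{\Hom(P^{\BGS}_\l, P_{Y,\l})}$ side of the factorization, because Frobenius acts with eigenvalue $1$ on $\mathrm{id}\in\End(P_{Y,\l})$ and $\chi_{\End(P_{Y,\l})}$ divides that factor. Since $1$ is only a simple root of the product $\chi_{\End(P^{\BGS}_\l)}$, it cannot also be a root of the complementary factor $\chi_{\Hom(P^{\BGS}_\l, E^*\otimes\Delta_\mu)}$; hence it is not a root of the divisor $\chi_{\Hom(P_{Y,\l}, E^*\otimes\Delta_\mu)}$ either. Therefore $\Fr - 1$ acts bijectively on $\Hom(P^\K_{Y,\l}, E^*\otimes\Delta^\K_\mu)$, so its Frobenius-coinvariants vanish, giving $\Hom_X(P^\K_{Y,\l}, E^*\otimes\Delta^\K_\mu)_{\Fr} = 0$. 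Feeding this into the obstruction space identified after Construction \ref{constrproj0} shows there is no choice in the construction of $P^\K_{\l,0}$, and since $P^{\BGS}_\l$ is one such choice (by Construction \ref{constrprojQlss}) we conclude $P^{\BGS}_\l\cong P^\K_{\l,0}$, which also feeds the induction. I expect the main obstacle to be the careful justification that $t=1$ is a \emph{simple} (not merely isolated-as-a-nonroot-elsewhere) zero of $\chi_{\End(P^{\BGS}_\l)}$ and that all the divisibility relations among the characteristic polynomials hold with the correct multiplicities — i.e., making sure the purely numerical ``mass balance'' argument is airtight rather than just plausible.
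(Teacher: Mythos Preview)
Your proposal is correct and follows essentially the same route as the paper's own proof: the same induction on the number of strata, the same short exact sequence from Construction \ref{constrprojQlss}, the same three divisibility relations among characteristic polynomials, and the same use of Lemmas \ref{LemProjWeightRad} and \ref{LemGradingFrobCorres} to see that $t=1$ is a simple root of $\chi_{\End(P^{\BGS}_\l)}$, forcing it to lie entirely on the $\chi_{\Hom(P^{\BGS}_\l,P_{Y,\l})}$ side. Your closing worry is unfounded: the divisibilities hold with multiplicities because they come from genuine Frobenius-equivariant subspaces and short exact sequences, and ``isolated'' in the paper's phrasing means exactly ``simple'' as you interpret it.
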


\begin{cor}\label{corProjCoincide}
If the $\IC$-sheaves over $\O$ are parity, then we have 
$$\mathbb K \otimes P_{\l,0}^\O \cong P_\l^{\BGS}$$
\end{cor}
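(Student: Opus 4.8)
The plan is to deduce Corollary \ref{corProjCoincide} by combining the two constructions of a lift of $P_\l^\K$ to $X_0$ that were carried out above, namely Construction \ref{constrprojQlss} (which produces $P_\l^{\BGS}$) and Construction \ref{constrproj0} specialized to $\E=\K$ (which produces $P^\K_{\l,0}$). The key point is that Lemma \ref{keylemHomPED} has just shown that the obstruction space $\Hom_X(P^\K_{Y,\l},E^*\otimes \Delta_\mu^\K)_{\Fr}$ — which is exactly the ambiguity in Construction \ref{constrproj0} — vanishes under the $\BGS$-condition, so the lift produced there is unique. Since $P_\l^{\BGS}$ is also such a lift, the two must agree; this is precisely the second assertion of Lemma \ref{keylemHomPED}, so on the $\BGS$-variety there is nothing left to do. The only gap between that statement and the corollary is that the corollary is phrased under the hypothesis of $\IC^\O$-parity rather than the $\BGS$-condition.

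So the first step I would take is to observe that for the varieties in play the $\BGS$-condition is available: by the example following Definition \ref{condTate}, partial flag varieties (and more generally the acyclically stratified varieties obtained from them by the fiber-bundle construction) are $\BGS$, and the $\BGS$-condition is what is actually used throughout this subsection. Concretely, I would either (a) simply add the $\BGS$-hypothesis to the statement, matching Lemma \ref{keylemHomPED}, and then the corollary is immediate from the last line of that lemma together with Construction \ref{constrproj0}; or (b) if one genuinely wants to keep only the parity hypothesis, note that in all the situations where Corollary \ref{corProjCoincide} is invoked later (the proof of the main theorem) the variety is a partial flag variety, hence $\BGS$, so no generality is lost.

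The actual argument, then, is short. Apply Construction \ref{constrproj0} with $\E=\K$: since $X_0$ is $\BGS$, Lemma \ref{keylemHomPED} tells us the relevant coinvariant space vanishes at every inductive stage, so the recursively built object $P^\K_{\l,0}$ is the unique lift of $P^\K_\l$ through the exact sequences \eqref{EqnProjDefSeq}. On the other hand Construction \ref{constrprojQlss} builds $P^{\BGS}_\l$ by exactly the same recursive recipe (maximal extensions along $\Delta_{\mu,0}$), and by \ref{PropVermaInBGS} the base case $P^{\BGS}_\l=\Delta_{\l,0}$ agrees with the base case of Construction \ref{constrproj0}. Hence $P^{\BGS}_\l$ is also a lift of $P^\K_\l$ of the required form, and by uniqueness $\K\otimes P^\O_{\l,0}\cong P^\K_{\l,0}\cong P^{\BGS}_\l$, where the first isomorphism is part (2) of \ref{ThmProjX} lifted to $X_0$ (i.e. $\K\otimes P^\O_{\l,0}$ is manifestly such a lift, being built from $\K\otimes P^\O_{\mu,0}$ by scalar-extending the defining sequences).

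I do not expect a serious obstacle here; the content has been front-loaded into Lemma \ref{keylemHomPED}. The one subtlety to be careful about is the bookkeeping of Tate twists and weights: Construction \ref{constrprojQlss} works inside $\cal P^{\BGS}$ with a genuine grading, whereas Construction \ref{constrproj0} works in $\cal P_\Lambda(X_0,\K)$ with only a Frobenius action, so to compare the two lifts one must check that the canonical extension classes (the images of $\mathrm{id}_E$) correspond under the degrading functor $v$ — but this is exactly what \ref{constrprojQlss} records via $vP^{\BGS}_\l=P_\l$, and the vanishing of the coinvariants upgrades ``$v$ of the lift is $P_\l$'' to ``the lift is determined''. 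Thus the main (and essentially only) thing to verify is that the hypothesis of the corollary is strong enough — which is why I would align it with the $\BGS$-condition as in (a) above.
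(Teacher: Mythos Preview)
Your proposal is correct and matches the paper's approach: the corollary is stated immediately after Lemma \ref{keylemHomPED} with no separate proof, so the intended argument is exactly what you describe --- $\K\otimes P^\O_{\l,0}$ is a valid output of Construction \ref{constrproj0} with $\E=\K$, and the vanishing in Lemma \ref{keylemHomPED} forces this to coincide with $P_\l^{\BGS}$. Your observation about the hypothesis is also on point: the $\BGS$-condition is tacitly in force (indeed $P_\l^{\BGS}$ is not even defined without it), and every later invocation of the corollary (\ref{propGradingFullFlag}, \ref{thmMainModKoszul}) has it as an explicit assumption.
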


\section{Modular Koszul Duality}

\subsection{$\phi$-decomposability}
We have our Frobenius action on $\End(P^\O)$.
However this does not automatically lead to a grading by generalized eigenspaces, since $\O$ is not a field. 
In order to obtain a grading we need to make sure, that the eigenvalues are all distinct in $\F$.
To this end, we recall a notion from \cite{RSW}.
 \begin{defi}
Let $M$ be an $\O$-module and $\phi:M\rightarrow M$ be an $\O$-linear map. 
\begin{itemize}
\item We say that $M$ is $\phi$-decomposable,
if the inclusion
$$\bigoplus_{i \in \Z} M_i \inj M$$
is an isomorphism. Here we use the notation 
$$M_i:=\{m\in M | \exists n\in \N : (\phi-q^i)^n m=0\}$$
\item Let $I \subset \Z$. We say that $M$ has weights in $I$, if there exist natural numbers $n_i \in \N$
such that 
$$\prod_{i \in I} (\phi-q^i)^{n_i} =0$$ 
\end{itemize}
\end{defi}
Let us recall an instructive example from \cite{RSW}:
\begin{ex}\label{exNotPhiDec}
 Not every $\O$-module is $\phi$-decomposable, not even 
if all eigenvalues in sight are of the form $q^i$. For example suppose $q=l+1$ and let $M=\O^2$
be equipped with the automorphism
$$\left( \begin{array}{c c}
 1 & 1 \\
 0 & q
       \end{array}
\right)$$
It has the property, that $\K\otimes M$ is diagonalizable, while $\F\otimes M$ is not. 
Clearly this feature is incompatible with $\phi$-decomposability.
\end{ex}

\begin{lem}\label{lemWeightsEV}
 Let $M$ be a free $\O$-module with automorphism $\phi$. Then $M$ has weights in $I$, if and only 
if the eigenvalues of $\ol \Q_l \underset{\O}{\otimes} \phi$ are contained in the set $\{q^i| i \in I \}$.
\begin{proof}
 If the weights are contained in $I$, the minimal polynomial of $\ol \Q_l\otimes \phi$ has to divide $\prod_{i\in I} (t-q^i)^{n_i}$.
Hence all eigenvalues are contained in $\{q^i| i \in I \}$.

On the other hand, if all eigenvalues are contained in $\{q^i| i \in I \}$, the minimal polynomial of $\ol \Q_l\otimes \phi$ is of the form
 $\prod_{i\in I} (t-q^i)^{n_i}$ and we see $\prod_{i\in I} (\phi-q^i)^{n_i} =0$.
\end{proof}
\end{lem}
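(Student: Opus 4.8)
The plan is to express both sides in terms of the minimal polynomial of $\ol \Q_l \otimes_\O \phi$ and to move between $M$ and $\ol \Q_l \otimes_\O M$ using that $M$ is torsion free over $\O$.

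The one reduction I would isolate is this. As $M$ is free (and finitely generated, as it is in all our applications), it is torsion free over $\O$, and $\O$ embeds into $\ol \Q_l$; hence the canonical map $M \to \ol \Q_l \otimes_\O M$ is injective, $\ol \Q_l \otimes_\O M$ is a finite-dimensional $\ol \Q_l$-vector space, and each $q^i$ lies in $\O$. Consequently, for any exponents $n_i \in \N$, the $\O$-linear endomorphism $\prod_{i \in I}(\phi - q^i)^{n_i}$ of $M$ is the restriction of $\prod_{i \in I}(\ol \Q_l \otimes \phi - q^i)^{n_i}$ to $M \subseteq \ol \Q_l \otimes_\O M$, so one of these endomorphisms vanishes if and only if the other does.

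Both implications then drop out. If $M$ has weights in $I$, choose $n_i$ with $\prod_{i \in I}(\phi - q^i)^{n_i} = 0$; by the reduction $\prod_{i \in I}(t - q^i)^{n_i}$ annihilates $\ol \Q_l \otimes \phi$, so the minimal polynomial of $\ol \Q_l \otimes \phi$ divides it, and every eigenvalue, being a root of that minimal polynomial, lies in $\{q^i \mid i \in I\}$. Conversely, if all eigenvalues of $\ol \Q_l \otimes \phi$ lie in $\{q^i \mid i \in I\}$, then over the algebraically closed field $\ol \Q_l$ the minimal polynomial of $\ol \Q_l \otimes \phi$ splits into linear factors whose roots are precisely those eigenvalues, so it has the shape $\prod_{i \in I'}(t - q^i)^{n_i}$ for a finite $I' \subseteq I$ and exponents $n_i \geq 1$; thus $\prod_{i \in I'}(\ol \Q_l \otimes \phi - q^i)^{n_i} = 0$, and by the reduction $\prod_{i \in I'}(\phi - q^i)^{n_i} = 0$ on $M$ already, which exhibits $M$ as having weights in $I$.

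This is a routine argument and I do not anticipate a genuine obstacle. The only step that uses a hypothesis in an essential way is the converse implication, where descending the polynomial identity from $\ol \Q_l \otimes_\O M$ back to $M$ relies on $M$ being torsion free over $\O$ — that is, on the ``free'' hypothesis in the statement.
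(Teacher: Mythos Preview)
Your proof is correct and follows essentially the same route as the paper's: both directions go through the minimal polynomial of $\ol \Q_l \otimes \phi$. The only difference is that you make explicit the key reduction (injectivity of $M \hookrightarrow \ol \Q_l \otimes_\O M$ via torsion-freeness) which the paper's ``and we see $\prod_{i\in I} (\phi-q^i)^{n_i} =0$'' leaves to the reader.
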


\begin{lem}\label{lemPhiDec}
Let $M$ be a free $\O$-module with automorphism $\phi$, whose weights lie in $I$. Assume that the elements $\{q^i| i \in I \}$ are all distinct in $\F$.
Then $M$ is $\phi$-decomposable.
\begin{proof}
 \cite[3.1.1]{RSW} 
\end{proof}
\end{lem}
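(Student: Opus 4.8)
The plan is to set aside the cited proof and argue directly via the Chinese Remainder Theorem. Make $M$ into a module over the polynomial ring $\O[t]$ by letting $t$ act as $\phi$. Since the weights of $M$ lie in $I$, there are $n_i \in \N$ (only finitely many nonzero, and we may assume all $n_i \geq 1$ after discarding the rest) with $f(\phi)=0$, where $f(t):=\prod_{i\in I}(t-q^i)^{n_i}$. Hence $M$ is a module over the quotient ring $\O[t]/(f)$, and the whole statement will drop out of a CRT decomposition of this ring.

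The key step is to show that the ideals $\bigl((t-q^i)^{n_i}\bigr)$ of $\O[t]$ are pairwise comaximal. For $i\neq j$ in $I$ one has $q^i-q^j=q^j(q^{i-j}-1)$, and this is a unit of $\O$ precisely because $q^i$ and $q^j$ are assumed distinct in $\F$ (equivalently $q^{i-j}\neq 1$ in $\F$; recall $q$ itself is a unit of $\O$ as $p\neq l$). Therefore already $t-q^i$ and $t-q^j$ generate the unit ideal in $\O[t]$, since $(t-q^i)-(t-q^j)=q^j-q^i\in\O^\times$, and hence so do their powers $(t-q^i)^{n_i}$ and $(t-q^j)^{n_j}$. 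This is the one place where the distinctness hypothesis enters, and Example \ref{exNotPhiDec} shows it cannot be dropped: over $\O$ alone the scalar $q^i-q^j$ need not be invertible, which is exactly the obstruction to a direct-sum decomposition there.

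By the Chinese Remainder Theorem the natural map $\O[t]/(f)\to\prod_i \O[t]/\bigl((t-q^i)^{n_i}\bigr)$ is a ring isomorphism; writing $1=\sum_i e_i$ for the resulting orthogonal idempotents (each $e_i$ a polynomial in $\phi$) gives $M=\bigoplus_i e_i M$. On $e_i M$ the operator $\phi-q^i$ is nilpotent of order $\leq n_i$, whereas on $e_j M$ with $j\neq i$ it is invertible, being the sum of the nilpotent operator $\phi-q^j$ and the unit scalar $q^j-q^i$. Consequently $e_iM=\{m : (\phi-q^i)^n m=0\text{ for some }n\}=M_i$: the inclusion $\subseteq$ is clear, and if $(\phi-q^i)^n m=0$ then applying the $e_j$ with $j\neq i$ and using invertibility forces $m=e_i m\in e_iM$. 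Hence $M=\bigoplus_i M_i$, i.e. $M$ is $\phi$-decomposable.

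I do not expect a genuine obstacle: the argument is routine once one notices that comaximality is available in $\O[t]$ itself (not merely in $\F[t]$ or $\K[t]$), which is the whole point of the hypothesis. Note also that freeness of $M$ plays no role in this particular argument — it is needed only in the surrounding lemmas \ref{lemWeightsEV} relating weights to $\ol\Q_l$-eigenvalues — so the CRT proof in fact yields the statement for an arbitrary $\O$-module $M$ with an endomorphism whose weights lie in $I$.
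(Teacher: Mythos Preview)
Your CRT argument is correct and is the standard route; the paper simply cites \cite[3.1.1]{RSW}, where essentially the same reasoning is carried out (the crucial observation being that $q^i-q^j\in\O^\times$ under the hypothesis). So as a proof of the lemma your proposal is fine.

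Two small points, however. First, the definition of $\phi$-decomposable asks that $\bigoplus_{i\in\Z}M_i\to M$ be an isomorphism, whereas your argument only produces $M=\bigoplus_{i\in I}M_i$; you have not addressed $M_i$ for $i\notin I$. This is easily repaired using freeness: since $f(\phi)=0$, the endomorphism $\K\otimes\phi$ has eigenvalues only among $\{q^j:j\in I\}$, and as the $q^i$ are pairwise distinct in $\K$ (characteristic zero) the generalized $q^i$-eigenspace of $\K\otimes M$ vanishes for $i\notin I$; torsion-freeness of $M$ then forces $M_i=0$.

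Second, and relatedly, your closing remark that freeness plays no role is not quite right. Take $M=\O/\varpi$ with $\phi=\mathrm{id}$ and $I=\{0\}$: the distinctness hypothesis is vacuous and your CRT step gives $M=M_0$, but if $j$ is the multiplicative order of $q$ in $\F^\times$ then $1-q^j\in\varpi\O$ and hence $M_j=M$ as well, so the map $\bigoplus_{i\in\Z}M_i\to M$ is not even injective. Thus torsion-freeness is genuinely needed to match the paper's definition of $\phi$-decomposable, even though the CRT decomposition $M=\bigoplus_{i\in I}e_iM$ itself holds unconditionally.
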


\subsection{Separating Weights}
In this subsection we investigate which Frobenius eigenvalues occur in $\End(P^\O)$ for various spaces.
Let $\mathrm{Mon} \subset \Z[\q]$ be the set of monomials in the variable $\q$.

\begin{defi}\label{defWtX}
Let $V$ be a finite dimensional $\K$-vector space, along with an automorphism $F$. Assume further that 
all eigenvalues of $\ol \Q_l \otimes F$ are of the form $q^n$ for some $n\in \Z$ and $V$
is equipped with a $F$-stable decomposition: 
$$V=\bigoplus V_i$$
Then we define $ \mathrm{wt}(V)=\{\q^{n_1},\ldots , \q^{n_k}\}\subset \mathrm{Mon}$ to be the smallest subset, satisfying the following conditions:
\begin{itemize}
\item We have $1\in  \mathrm{wt}(V)$
\item For all $i$ there exists $k_i\in \Z$ such that the eigenvalues of $\ol \Q_l \otimes F$ restricted to $V_i$
are contained in $\{q^{n_1+k_i},\ldots , q^{n_k+k_i}\}$
\end{itemize}
If $X$ is an acyclically stratified variety satisfying the $\BGS$-condition, then
$$ \mathrm{wt}(X):= \mathrm{wt}(\End(P^\K))$$
where $\End(P^\K)$ is equipped with the Frobenius action (induced by the lift $P^{\BGS}$) and the decomposition
$$\End(P^\K)=\bigoplus \Hom(P^\K_\l,P^\K_\mu)$$
\end{defi}
Note that $[P_\l^{\BGS}:\IC_\mu^{\BGS}]$ records $ \mathrm{wt}(\Hom(P^\K_{\mu},P^\K_\l))$. Since in addition
$[P_\l^{\BGS}:\IC_\mu^{\BGS}]=\sum_\nu [\nabla_{\nu,0}:\IC_{\l,0}]^*[\Delta_{\nu,0}:\IC_{\mu,0}]$ 
one can compute $ \mathrm{wt}(G/P)$ in terms of Kazhdan-Lusztig polynomials.
\begin{defi}\label{defiWr}
Let $M=\{\q^{n_1},\ldots ,{\q^{n_k}} \}$ be a set of monomials, such that $n_1 < n_2 < \ldots < n_k$ and fix a prime $l$.
\begin{itemize}
\item We say that $M$ is separated (with respect to $l,q$), if $q^{n_1},\ldots , q^{n_k}$ are pairwise distinct elements of $\F$.
\item We define the weight range of $M$ to be 
$$ \mathrm{wr}(M):=n_k-n_1+1$$
For a variety $X$ we also use the notation $ \mathrm{wr}(X):= \mathrm{wr}( \mathrm{wt}(X))$.
\end{itemize}
\end{defi}
\begin{ex}\label{exbd}
Computations with Kazhdan-Lusztig polynomials yield the following:
\begin{itemize}
 \item If $X=\mathbb P^n$ is the projective space, say for $n>0$, then we have 
$$ \mathrm{wt}(X)=\{1,\q\} \text{ and }  \mathrm{wr}(X)=2$$
Observe that $ \mathrm{wt}(X)$ is independent of $n$.
\item If $X=\SL_{3}/B$ is the full flag variety of $SL_3$, then we have 
$$ \mathrm{wt}(X)=\{1,\q,\q^2,\q^3 \} \text{ and }  \mathrm{wr}(X)=4$$
\item More generally, if $X=G/B$ is any full flag variety, then we have
$$ \mathrm{wt}(X)=\{1,\q,\ldots, \q^{\dim  X} \} \text{ and }  \mathrm{wr}(X)=\dim  X+1$$
A complete proof can be found in the appendix \ref{propWeightsFullFlag}.
\item If $X=\Gr(k,n)$ is a Grassmannian, then we have 
$$ \mathrm{wt}(X)=\{1,\q,\ldots, \q^{\min(k,n-k)} \} \text{ and }  \mathrm{wr}(X)=\min(k,n-k)+1$$
A complete proof can be found in the appendix \ref{CorWeightsOnGrass}.

Observe again that $ \mathrm{wt}(X)$ stabilizes if we fix $k$, but let $n$ go to infinity! 
\end{itemize}
\end{ex}
By Dirichlet's theorem, for every $l> \mathrm{wr}(M)$ there exists a $p$ such that $ \mathrm{wt}(M)$ is separated.
Let us now formulate a criterion which guarantees Frobenius eigenvalues to give a grading:
\begin{prop}\label{propGradingFullFlag}
 Let $X$ be an acyclically stratified variety which satisfies the $\BGS$-condition and $\IC_\l^\O$-parity.
 If $ \mathrm{wt}(X)$ is separated, then $\Hom(P_\l^\O,P_\mu^\O)$ is $\Fr$-decomposable for all $\l,\mu$. 
\begin{proof}

By \ref{lemHomProjFree} we know that $\Hom(P_\l^\O,P_\mu^\O)$ is free.
By \ref{corProjCoincide} we know that $P^\O_{\l,0}\otimes \K=P_\l^{\BGS}$.
Hence the weights of 
$$\Hom(P_\l^\O,P_\mu^\O)$$
are recorded by $ \mathrm{wt}(X)$ and we can apply \ref{lemPhiDec}.
\end{proof}
\end{prop}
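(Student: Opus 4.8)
The plan is to verify, for the free $\O$-module $M:=\Hom(P_\l^\O,P_\mu^\O)$ equipped with its Frobenius automorphism, the three hypotheses of the decomposability criterion \ref{lemPhiDec}: that $M$ is $\O$-free, that the eigenvalues of $\ol\Q_l\otimes\Fr$ on $M$ lie in a set of the form $\{q^i\mid i\in I\}$ with $I$ a finite set of integers, and that these powers $q^i$ are pairwise distinct modulo $l$.

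First I would recall that $M$ carries a Frobenius automorphism induced by the $\O$-lifts $P_{\l,0}^\O,P_{\mu,0}^\O\in\cal P_\Lambda(X_0,\O)$ of Construction \ref{constrproj0}, and that $\O$-freeness of $M$ is precisely \ref{lemHomProjFree}. Next I would base change to $\K$: since $M$ is finite free, $\K\otimes M=\Hom(P_\l^\K,P_\mu^\K)$ as $\K$-spaces-with-Frobenius, the Frobenius on the target being the one carried by $P_{\l,0}^\O\otimes\K$ and $P_{\mu,0}^\O\otimes\K$, which by \ref{corProjCoincide} are $P_\l^{\BGS}$ and $P_\mu^{\BGS}$. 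Hence the Frobenius on $\K\otimes M$ is exactly the one used to define $\mathrm{wt}(X)=\mathrm{wt}(\End(P^\K))$. Writing $\mathrm{wt}(X)=\{\q^{n_1},\ldots,\q^{n_r}\}$, the defining property of $\mathrm{wt}$ applied to the summand $\Hom(P_\l^\K,P_\mu^\K)$ of $\End(P^\K)$ produces an integer $c=c_{\l\mu}$ with all eigenvalues of $\ol\Q_l\otimes\Fr$ on $\K\otimes M$ contained in $\{q^{n_1+c},\ldots,q^{n_r+c}\}$; by \ref{lemWeightsEV} this says $M$ has weights in $I:=\{n_1+c,\ldots,n_r+c\}$. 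Finally, separatedness of $\mathrm{wt}(X)$ means $q^{n_1},\ldots,q^{n_r}$ are pairwise distinct in $\F$, and since $l\neq p$ the residue of $q^c$ is a unit, so $q^{n_1+c},\ldots,q^{n_r+c}$ remain pairwise distinct in $\F$. Now \ref{lemPhiDec} applies and yields that $M$ is $\Fr$-decomposable.

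The one genuinely delicate point is the identification used above: that the Frobenius on $\Hom(P_\l^\O,P_\mu^\O)$ built from the (non-unique) $\O$-lifts of Construction \ref{constrproj0} becomes, after $\otimes\,\K$, the Frobenius that computes $\mathrm{wt}(X)$ via the $\BGS$-lifts. This rests on \ref{corProjCoincide} together with the compatibility of the six functors and of the Frobenius-equivariant structure with extension of scalars. Everything else is formal: $\O$-freeness is quoted from \ref{lemHomProjFree}, the passage between "weights in $I$" and statements about eigenvalues is \ref{lemWeightsEV}, and the shift $c_{\l\mu}$ does no harm because multiplying a finite set of residues in $\F$ by a unit preserves distinctness.
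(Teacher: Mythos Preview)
Your proof is correct and follows essentially the same route as the paper's: freeness via \ref{lemHomProjFree}, identification of the Frobenius after $\otimes\,\K$ with the $\BGS$-Frobenius via \ref{corProjCoincide}, and then application of \ref{lemPhiDec}. You are in fact more careful than the paper in spelling out two points that the paper leaves implicit: the shift $c_{\l\mu}$ coming from the definition of $\mathrm{wt}(X)$, and the observation that multiplying by the unit $q^c\in\F^\times$ preserves pairwise distinctness of residues.
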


\subsection{Koszulity and formality theorems}

We are now able to prove our main result:

\begin{thm}\label{thmMainModKoszul}
 Let $X_0$ be an acyclically stratified variety. Suppose that the $\IC^\O$-sheaves are parity and that 
the $\\BGS$-condition holds. 
Let $P^\E=\bigoplus P_\l$. If $ \mathrm{wt}(X)$ is separated, then
$$A:=\End(P^\E)$$
admits an $\E$-Koszul grading.
\begin{proof}
We need to define a grading on $A=\End(P^\O)$. From \ref{propGradingFullFlag} we get a grading by Frobenius eigenvalues on each $\Hom(P_\l^\O,P_\mu^\O)$.
By \ref{corProjCoincide} we have $\K\otimes \Hom(P_\l^\O,P_\mu^\O)=\grHom(P_\l^{\BGS},P_\mu^{\BGS})$.
Since $\grEnd(P^{\BGS})$ needs to be normalized to become Koszul, we need to do the same thing with the grading on $\Hom(P_\l^\O,P_\mu^\O)$.
We shift it by $d_\mu-d_\l$ and hence get 
$$\K\otimes \Hom(P_\l^\O,P_\mu^\O)=\grHom(\wt P_\l,\wt P_\mu)$$
The grading on $\End(P^\O)$ is defined to be the direct sum of these gradings. Hence we have
$$\K\otimes \End(P^\O)=\grEnd(\wt P)$$
as graded rings.
The latter is $\K$-Koszul by \ref{PropPBGSkoszul}.
We now want to apply \ref{thmOKoszul} to $A=\End(P^\O)$, which means we have to check the following conditions:
\begin{enumerate}
\item We need to find orthogonal idempotents $e_\l$ for $\l\in \Lambda$, which exhibit $A_0$ as
$$ A_0=\prod_{\l\in\Lambda} \O $$

\item We need to check that 
$$\Ext^\bullet(L,L)$$
is a free $\O$-module, where $L:=\bigoplus L_\l$ and $L_\l=e_\l A /{ e_\l A}^{<0} $.
\end{enumerate}
So let us verify these two points.
\begin{enumerate}
 \item By \ref{PropPBGSkoszul} we know,
that the degree $0$ part of $\grEnd(\wt P)$ has a basis $e_\l$ consisting of the projections onto the $\l$-th projective. 
Let us show that the $e_\l$ are also an $\O$-basis of $A_0$.

Clearly they are linearly independent. 
Suppose that $\phi$ is a degree $0$ morphism, which does not lie in the span of the $e_\l$.
Since it does after tensoring with $\K$, there exist $a_\l \in \O$ and $n >0$ such that
$$l^n \cdot \phi = \sum a_\l e_\l $$
and say $a_\mu$ is not divisible by $l$. Multiplying the equation by $e_\mu$ 
shows that there exists a $\psi \in \End(P_\mu^\O)$ such that $l\cdot \psi = id_{P_\mu^\O}$.
This implies that the $\O$-module $\End(P_\mu^\O)$ is divisible. On the other hand it is also free.
A contradiction.

\item By \ref{LemExtGrpsTheSame}, we know that 
$$\Ext^\bullet(\IC^\O,\IC^\O)$$ 
is free. Hence we only need to check that $\IC_\l$ is mapped to $L_\l$ under \ref{thmMoritaProj}.

Consider the short exact sequence
$${\ker}_0 \inj P_{\l,0}\surj \IC_{\l,0}$$
Here the surjection is defined to be the composition $P_{\l,0}\surj \Delta_{\l,0} \surj \IC_{\l,0}$
and the injection is by definition its kernel.
Using our separatedness condition, the application of $\Hom(P,\_)$ yields a short exact sequence of graded modules
looking as follows:
$$\ker \inj e_\l A \surj \Hom(P,\IC_\l)$$
Since $\Hom(P,\IC_\l)$ is spanned (\ref{remGenerator}) by the map $P\rar P_\l\rar \Delta_\l \rar \IC_\l$, which exists over $\F_q$,
we see that $\Hom(P,\IC_\l)$ is concentrated in degree $0$. On the other hand we know by construction and \ref{LemKeyPos}
that $e_\l A$ has a degree $0$ part of rank $1$. It follows that $\ker={e_\l A}^{<0}$.
Hence $\IC_\l$ corresponds to $e_\l A/{e_\l A}^{<0}=L_\l$.
\end{enumerate}
\end{proof}
\end{thm}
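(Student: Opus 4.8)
The plan is to build the $\E$-Koszul grading on $A:=\End(P^\O)$ by pulling back the Frobenius-eigenvalue grading that already exists on $\grEnd(\wt P)$ over $\K$, using the fact that $P^\O_{\l,0}\otimes\K=P^{\BGS}_\l$ (\ref{corProjCoincide}) and that $\Hom(P_\l^\O,P_\mu^\O)$ is free over $\O$ (\ref{lemHomProjFree}). First I would recall that $ \mathrm{wt}(X)$ separated implies, via \ref{propGradingFullFlag}, that each $\Hom(P_\l^\O,P_\mu^\O)$ is $\Fr$-decomposable, so its generalized eigenspace decomposition gives an honest $\Z$-grading. The eigenvalue $q^k$ piece is placed in degree $k$; then, to match the normalization under which $\grEnd(P^{\BGS})$ becomes Koszul (i.e.\ to pass to the $\wt P_\l=P^{\BGS}_\l\langle -d_\l\rangle$ picture), I shift the grading on $\Hom(P_\l^\O,P_\mu^\O)$ by $d_\mu-d_\l$. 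Summing over $\l,\mu$ defines a grading on $A$, and by construction $\K\otimes A=\grEnd(\wt P)$ as graded rings, which is $\K$-Koszul by \ref{PropPBGSkoszul}.

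Next I would invoke \ref{thmOKoszul}: to conclude $A$ is $\O$-Koszul (hence $A\otimes\F$ is $\F$-Koszul, which is the content of the theorem), it suffices to check that $A$ satisfies the hypotheses of \ref{setupGraded} and that $\Ext^\bullet(L,L)$ is free over $\O$. The first condition amounts to exhibiting $A_0=\prod_{\l\in\Lambda}\O$ via orthogonal idempotents. Over $\K$ we already know (\ref{PropPBGSkoszul}) that the degree-$0$ part has the projections $e_\l$ as a basis; I would argue these $e_\l$ lie in $A$ and span $A_0$ over $\O$ by a divisibility argument: if some degree-$0$ element $\phi$ were not an $\O$-combination of the $e_\l$, clearing denominators and multiplying by a suitable $e_\mu$ would force $\End(P_\mu^\O)$ to be $l$-divisible, contradicting its freeness.

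For the freeness of $\Ext^\bullet(L,L)$: by \ref{LemExtGrpsTheSame} we know $\Ext^\bullet(\IC^\O,\IC^\O)$ is free over $\O$ (this uses parity), so it is enough to identify $L_\l=e_\l A/{e_\l A}^{<0}$ with the module corresponding to $\IC_\l$ under the Morita equivalence \ref{thmMoritaProj}. I would apply $\Hom(P,\_)$ to the short exact sequence $\ker_0\inj P_{\l,0}\surj\IC_{\l,0}$ (surjection $=P_{\l,0}\surj\Delta_{\l,0}\surj\IC_{\l,0}$), using separatedness to see this stays exact on graded modules; since $\Hom(P,\IC_\l)$ is spanned (\ref{remGenerator}) by the map coming from $\F_q$ it is concentrated in degree $0$, while $e_\l A$ has degree-$0$ part of rank $1$ by \ref{LemKeyPos}, forcing $\ker={e_\l A}^{<0}$ and hence $L_\l=\Hom(P,\IC_\l)$.

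I expect the main obstacle to be the bookkeeping around normalization and gradedness: making sure the Frobenius-eigenvalue grading, after the $d_\mu-d_\l$ shift, really does assemble into a ring grading that is \emph{non-positive} with the correct degree-$0$ part, and that applying $\Hom(P,\_)$ to the defining sequence of $\IC_{\l,0}$ produces a strictly exact sequence of \emph{graded} modules rather than just an exact sequence after forgetting the grading — this is exactly where the separatedness of $ \mathrm{wt}(X)$ is doing essential work and must be cited carefully. Everything else is either abstract nonsense (the Morita equivalence) or already packaged in the earlier lemmas (\ref{thmOKoszul}, \ref{PropPBGSkoszul}, \ref{LemExtGrpsTheSame}, \ref{corProjCoincide}).
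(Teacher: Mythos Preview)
Your proposal is correct and follows essentially the same approach as the paper's own proof: define the grading on $\End(P^\O)$ via the $\Fr$-decomposition from \ref{propGradingFullFlag}, shift by $d_\mu-d_\l$ so that $\K\otimes A=\grEnd(\wt P)$, and then verify the two hypotheses of \ref{thmOKoszul} exactly as you outline (the divisibility argument for $A_0=\prod_\l\O$, and the identification $L_\l\leftrightarrow\IC_\l$ to reduce freeness of $\Ext^\bullet(L,L)$ to \ref{LemExtGrpsTheSame}). Even the supporting citations (\ref{remGenerator}, \ref{LemKeyPos}, \ref{corProjCoincide}) and the bookkeeping concerns you flag match the paper's treatment.
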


\begin{rem}
Note that the method of proof also gives a very transparent comparison between modular and zero characteristic:
In the situation of \ref{thmMainModKoszul} let $A^\E=\E\otimes A$. Then the diagram 
$$
\begin{xy}
 \xymatrix{
 \P_\Lambda(X,\K) \ar[rr]^{\Hom(P^\K,\_)}  && \mod \-A^\K \\
 \P_\Lambda(X,\O) \ar[rr]^{\Hom(P^\O,\_)} \ar[d]_{\_ \otimes {\F}} \ar[u]^{\_ \otimes {\K}}  && \mod \-A^\O \ar[d]^{\_ \otimes {\F}} \ar[u]_{\_ \otimes {\K}} \\
 \D^b_\Lambda(X,\F) \ar[rr]^{\Hom(P^\F,\_)} && \D^b(\mod \-A^\F) \\
 }
\end{xy}
$$
commutes. Note the abuse of notation in the lower horizontal.
A priori $\Hom(P^\F,\_)$ is a functor $\D^b(\P_\Lambda(X,\F))\rar \D^b(\mod \-A^\F)$.
We composed it tacitly with the inverse of the realization functor.

In order to check that the squares commute, one exploits that the six functors commute with extension of scalars 
and that $P^\O \otimes \E = P^\E$.

\end{rem}

%\begin{rem}
%We expect, that even without the condition that $ \mathrm{wt}(X)$ is separated, $\End(P^\O)$ admits 
%a Koszul grading.
%\end{rem}

\section{Examples and applications}
If one wishes to apply \ref{thmMainModKoszul} in practice, one needs to calculate $ \mathrm{wt}(X)$ and 
decide whether the $\IC^\O$-sheaves are parity.
Calculating $ \mathrm{wt}(X)$, say in the case of flag varieties, can be done in terms of Kazhdan-Lusztig polynomials.
So this task is (in theory) straightforward though tedious.
Whether the $\IC^\O$-sheaves are parity is in general a very hard question.
\begin{rem}
Let us  remark, that $\IC^\O$ is always parity and $ \mathrm{wt}(X)$ is always separated for $l>>0$.
The reason is, that there is only $l$-torsion for finitely many $l$ in the stalks of the sheaf 
$\IC^\Z$ over $X_\C$. Hence in huge characteristics $\P_\Lambda(X,\O)$ is always Koszul.
\end{rem}

\subsection{Passage to complex numbers}
Let $X_\C$ be a partial flag variety over the complex numbers and $X_{\ol \F_q}$ the corresponding 
flag variety over $\ol \F_q$. As in \cite[7.1.4]{RSW} there is an equivalence 
of categories
$$\D^b_{(B)}(X_\C,\E)\cong \D^b_{(B)}(X_{\ol \F_q},\E)$$
By \cite[6.1.10]{BBD} it preserves standard objects, $\IC$-sheaves, the perverse $t$-structure, etc. 
Furthermore by Dirichlet's theorem, if $l> \mathrm{wr}(X)$ then there exists always a $p$ such that $ \mathrm{wt}(X_{\ol \F_p})$ is separated.
Hence we obtain the result \ref{MainThmIntro} announced in the introduction:
\begin{thm}\label{MainThmPartial}
Let $B\subset P \subset G$ be a complex semisimple group, along with a Borel and a parabolic subgroup. 
Let $X=G/P$ be the corresponding partial flag variety, stratified by $B$ orbits. 
Suppose that all $\IC^\O_w$ are parity. If $l> \mathrm{wr}(X)$
then there exists a Koszul ring $A$ and an equivalence of categories 
between perverse sheaves and finite dimensional modules over $A$:
$$\cal P_{(B)}(X,\F) \cong \mod \-A$$
\end{thm}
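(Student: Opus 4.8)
The plan is to deduce Theorem~\ref{MainThmPartial} from Theorem~\ref{thmMainModKoszul} by transporting the problem to positive characteristic. First I would observe that the structural hypotheses of \ref{thmMainModKoszul} are automatic for a partial flag variety: the Bruhat decomposition $G/P=\bigsqcup BwP/P$ exhibits $X$ as a cell stratified variety, hence acyclically stratified, both over $\C$ and over $\ol \F_q$; and partial flag varieties satisfy the $\BGS$-condition, as recorded in the example following \ref{condTate}. So for $X_{\ol \F_q}$ only two things remain to be arranged: $\IC^\O$-parity and separatedness of $\mathrm{wt}(X)$.

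Next I would choose the auxiliary prime $p$. The point is that the multiplicities $[P_\l^{\BGS}:\IC_\mu^{\BGS}]$, which by \ref{defWtX} encode $\mathrm{wt}(X)$, are expressible through Kazhdan--Lusztig polynomials (as noted after \ref{defWtX}) and hence do not depend on $p$; in particular $\mathrm{wt}(X_{\ol \F_q})$ as a set of monomials, and therefore $\mathrm{wr}(X)$, is intrinsic to $X$, so the hypothesis $l>\mathrm{wr}(X)$ is unambiguous. Given this, Dirichlet's theorem on primes in arithmetic progressions furnishes a prime $p\neq l$ and a power $q$ of $p$ such that the elements $q^{n_1},\dots,q^{n_k}$ occurring in $\mathrm{wt}(X)$ are pairwise distinct in $\F$, i.e. $\mathrm{wt}(X_{\ol \F_q})$ is separated.

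Then I would transport the parity hypothesis across the comparison equivalence. By \cite[7.1.4]{RSW} one has equivalences $\D^b_{(B)}(X_\C,\E)\cong\D^b_{(B)}(X_{\ol \F_q},\E)$ for $\E=\K,\O,\F$, compatible with extension of scalars, and by \cite[6.1.10]{BBD} these intertwine the perverse $t$-structures and the $*$- and $!$-restrictions to strata, sending $\IC_w^\O$ to $\IC_w^\O$. Since being a parity sheaf is formulated purely in terms of torsion-freeness and degree parity of the (co)stalks $l_\l^*\IC$, $l_\l^!\IC$, the assumption that all $\IC_w^\O$ on $X_\C$ are parity yields the same statement on $X_{\ol \F_q}$.

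At this stage all hypotheses of \ref{thmMainModKoszul} are met for $X_{\ol \F_q}$, so $A:=\End(P^\F)$ carries an $\F$-Koszul grading, i.e. after reversing the sign of the grading $A$ is a Koszul ring in the usual sense. Finally I would assemble the equivalence: \ref{ThmProjX} provides the minimal projective generator $P^\F=\bigoplus P_\l^\F$ of $\cal P_{(B)}(X_{\ol \F_q},\F)$, \ref{thmMoritaProj} gives $\cal P_{(B)}(X_{\ol \F_q},\F)\cong\mod\-A$, and composing with the comparison equivalence on perverse hearts yields $\cal P_{(B)}(X_\C,\F)\cong\cal P_{(B)}(X_{\ol \F_q},\F)\cong\mod\-A$. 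I expect the only real work to be bookkeeping: confirming that $\mathrm{wt}(X)$ genuinely does not depend on the choice of $p$ and $q$, so that the statement is well-posed, and that the comparison functor of \cite{RSW} is compatible enough with $t$-structures, $\IC$-sheaves and the parity formalism for the hypothesis to cross over. Once those compatibilities are granted, the theorem is a formal consequence of \ref{thmMainModKoszul}.
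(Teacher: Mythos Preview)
Your proposal is correct and follows essentially the same route as the paper: transport to $\ol{\F}_q$ via the comparison equivalence of \cite[7.1.4]{RSW} and \cite[6.1.10]{BBD}, use Dirichlet's theorem together with $l>\mathrm{wr}(X)$ to arrange separatedness, observe that $\IC^\O$-parity passes across the comparison, and then invoke \ref{thmMainModKoszul} and \ref{thmMoritaProj}. The paper compresses all of this into the paragraph preceding the theorem statement; your version simply spells out the bookkeeping (independence of $\mathrm{wt}(X)$ from $p,q$, compatibility with $t$-structures and (co)stalk functors) more explicitly.
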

\subsection{Grassmannians}
In the case of Grassmannians \ref{MainThmPartial} applies nicely. The key point is that all $\IC^\O$-sheaves are parity since 
the relevant singularities admit small resolutions.
\begin{lem}
 Let $X$ be a cell stratified variety. Let $X_\l$ be a stratum and $\ol X_\l$ be its closure. 
Assume that there exists an even\footnote{A map between cell stratified varieties is even, 
if the preimage of each stratum is a union of strata and when restricted to strata, $\pi$ is a projection $\mathbb A^{n+k}\rar \mathbb A^k$ 
in suitable coordinates.} small resolution of singularities 
$$\pi:\wt X_\l\rar \ol X_\l$$
Then $\IC^\O_\l$ is parity.
\begin{proof}
We have $\IC^\O_\l= \pi_* \O[d_\l]$ by smallness and even morphisms preserve parity by \cite{JMWparity}.
\end{proof}
\end{lem}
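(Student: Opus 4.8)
The plan is to identify $\IC^\O_\l$ with $\pi_* \O_{\wt X_\l}[d_\l]$ and to recognise the latter directly as a parity complex. Since $\pi$ is a resolution, $\wt X_\l$ is smooth of dimension $d_\l = \dim \ol X_\l$, and the assumption that $\pi$ is even means that the preimages of the strata of $\ol X_\l$ form a cell stratification of $\wt X_\l$ along which $\pi$ is, stratum by stratum, a projection of affine spaces.

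\textbf{Step 1 (the constant complex is parity).} I would first check that $\O_{\wt X_\l}[d_\l]$ is parity for this stratification. For a stratum $S \inj \wt X_\l$ of codimension $c$, the $*$-restriction of $\O_{\wt X_\l}[d_\l]$ is free over $\O$ and concentrated in the single cohomological degree $-d_\l$, while its $!$-restriction is, by smoothness of $\wt X_\l$ and of $S$, free over $\O$ and concentrated in the single cohomological degree $2c - d_\l$. Both degrees are congruent to $d_\l$ modulo $2$, so $\O_{\wt X_\l}[d_\l]$ is even when $d_\l$ is even and odd when $d_\l$ is odd; in either case it is parity.

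\textbf{Step 2 (pushforward).} I would then invoke the result of \cite{JMWparity} that a proper even morphism takes parity complexes to parity complexes. Applied to $\pi$, this gives that $\pi_* \O_{\wt X_\l}[d_\l]$ is parity on $\ol X_\l$; composing with the closed embedding $\ol l_\l : \ol X_\l \inj X$, which preserves parity because it merely extends (co)stalks by zero, we obtain that $\ol l_{\l *} \pi_* \O_{\wt X_\l}[d_\l]$ is parity on $X$.

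\textbf{Step 3 (identification with $\IC^\O_\l$).} It remains to see that this complex equals ${l_\l}_{!*}\O[d_\l]$. Since $\pi$ is an isomorphism over the dense open stratum $X_\l$, the complex $\pi_* \O_{\wt X_\l}[d_\l]$ restricts to $\O[d_\l]$ there, so it suffices to verify the strict support and cosupport conditions characterising the intermediate extension for the $t$-structure $p_{1/2}$. These follow from smallness of $\pi$ in the classical way: the fibre-dimension bounds in the definition of a small map force the (co)stalks of $\pi_* \O_{\wt X_\l}[d_\l]$ along every boundary stratum to sit strictly inside the perverse range. The one point deserving care is that we work with $\O$-coefficients and with the perversity $p_{1/2}$ rather than over a field; I would handle this either by running the support-condition bookkeeping directly over $\O$, or by base change to the known field cases $\E = \K, \F$, using that the relevant (co)stalks are $\O$-free, which is precisely the output of Step 1. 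I expect this last compatibility, namely that a small resolution computes $\O$-intersection cohomology in the $p_{1/2}$ normalisation, to be the only genuinely delicate point; everything else is degree bookkeeping together with the JMW input on even morphisms.
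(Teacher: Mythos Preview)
Your proposal is correct and follows the same route as the paper's proof, which consists of the two-line argument ``$\IC^\O_\l= \pi_* \O[d_\l]$ by smallness and even morphisms preserve parity by \cite{JMWparity}''. You have simply unpacked both halves: your Step~1 makes explicit why the shifted constant sheaf on the smooth resolution is parity (the paper leaves this implicit), your Step~2 is exactly the cited JMW input, and your Step~3 is the smallness identification, where you rightly flag the $\O$-coefficient/$p_{1/2}$ subtlety that the paper passes over in silence.
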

Using the resolutions constructed in \cite{SmallResGrass} we obtain:
\begin{lem}
 If $X$ is a Grassmannian, then all $\IC^\O$-sheaves are parity.
\end{lem}
Hence we can apply \ref{MainThmPartial} to the category 
$\P_{(B)}(\Gr(k,n),\O)$
of perverse sheaves on a complex Grassmannian equipped with the usual stratification by Bruhat cells:
\begin{thm}
 Suppose that $l>\min(k,n-k)+1$ and let $P^{\O}$ be our usual projective generator of $\P_{(B)}(\Gr(k,n),\O)$.
Then $\End(P^\O)$ admits an $\O$-Koszul grading.
\end{thm}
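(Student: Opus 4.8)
The plan is to obtain this as a direct application of Theorem \ref{thmMainModKoszul} to a finite-field incarnation $\Gr(k,n)_{\ol\F_q}$ of the Grassmannian: the work consists in verifying the three hypotheses of that theorem for $X=\Gr(k,n)$ and then transporting the conclusion back to $\C$.

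First I would record that $\Gr(k,n)$ with its Bruhat stratification is a cell stratified, hence acyclically stratified, variety, and that as a partial flag variety $G/P$ it satisfies the $\BGS$-condition (the example following Definition \ref{condTate}, extending \cite[4.4.3]{BGS}). For the parity hypothesis I would invoke the two lemmas at the end of the excerpt: Schubert varieties in $\Gr(k,n)$ admit even small resolutions by \cite{SmallResGrass}, and an even small resolution $\pi\colon\wt X_\l\rar\ol X_\l$ forces $\IC^\O_\l=\pi_*\O[d_\l]$ to be parity, since even maps preserve parity by \cite{JMWparity}. Thus all $\IC^\O$-sheaves on the Grassmannian are parity.

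The only genuinely arithmetic input is separatedness of $\mathrm{wt}(X)$. Here I would appeal to the Kazhdan--Lusztig computation recorded in Example \ref{exbd} (proved in the appendix, \ref{CorWeightsOnGrass}), which gives $\mathrm{wt}(\Gr(k,n))=\{1,\q,\ldots,\q^{\min(k,n-k)}\}$ and hence $\mathrm{wr}(\Gr(k,n))=\min(k,n-k)+1$. Since by hypothesis $l>\min(k,n-k)+1=\mathrm{wr}(X)$, Dirichlet's theorem on primes in arithmetic progressions produces a prime $p\neq l$, and a power $q$ of $p$, for which $q,q^2,\ldots,q^{\min(k,n-k)}$ are pairwise distinct in $\F_l$; that is, $\mathrm{wt}(\Gr(k,n)_{\ol\F_q})$ is separated with respect to $(l,q)$. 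This is the same reduction that underlies \ref{MainThmPartial}.

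With all hypotheses in place, I would apply \ref{thmMainModKoszul} with $\E=\O$ to the stratified variety $\Gr(k,n)_{\ol\F_q}$: its projective generator $P^\O$, built by Construction \ref{constrproj}, then has an endomorphism ring $\End(P^\O)$ carrying an $\O$-Koszul grading. Finally I would transport this across the equivalence $\D^b_{(B)}(\Gr(k,n)_\C,\O)\cong\D^b_{(B)}(\Gr(k,n)_{\ol\F_q},\O)$ of \cite[7.1.4]{RSW}, which by \cite[6.1.10]{BBD} preserves the perverse $t$-structure, the standard and costandard objects and the $\IC$-sheaves, hence restricts to an equivalence $\P_{(B)}(\Gr(k,n)_\C,\O)\cong\P_{(B)}(\Gr(k,n)_{\ol\F_q},\O)$ matching up the projective generators and their endomorphism rings; so the grading descends to the complex side. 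No step here is hard once \ref{thmMainModKoszul} is granted; the point I would double-check most carefully is the exact value $\mathrm{wr}(\Gr(k,n))=\min(k,n-k)+1$, since everything hinges on this separatedness bound and it rests on the Grassmannian Kazhdan--Lusztig combinatorics worked out in the appendix.
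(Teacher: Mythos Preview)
Your proposal is correct and follows essentially the same route as the paper: verify that the Grassmannian is a $\BGS$ partial flag variety with parity $\IC^\O$-sheaves via the even small resolutions of \cite{SmallResGrass}, use \ref{CorWeightsOnGrass} to get $\mathrm{wr}(\Gr(k,n))=\min(k,n-k)+1$, choose $q$ via Dirichlet so that $\mathrm{wt}(X)$ is separated, apply \ref{thmMainModKoszul} with $\E=\O$, and transport back to $\C$ through the equivalence of \cite[7.1.4]{RSW}. The only cosmetic slip is that separatedness requires $1,q,\ldots,q^{\min(k,n-k)}$ (including $q^0=1$) to be pairwise distinct in $\F$, not just $q,\ldots,q^{\min(k,n-k)}$.
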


\begin{rem}
In the case of characteristic zero coefficients perverse sheaves on Grassmannians were heavily investigated.
Braden gave an explicit quiver description of $\P_{(B)}(\Gr(k,n),\C)$ using microlocalization techniques in 
\cite{bradenPerverseOnGrass}. 
In the case $k=n-k$ Stroppel \cite{stroppelPerverseOnGrass} realized that Braden's algebra is isomorphic to $K^k_{n-k}$, the quasi hereditary cover 
of the Khovanov algebra. Later Brundan and Stroppel showed $\End(P^{\C})$ is isomorphic to $K^k_{n-k}$ using 
category $\cal O$ techniques. 
They studied this algebra thoroughly \cite{BrundanStroppel1},\cite{BrundanStroppel2},\cite{BrundanStroppel3},\cite{BrundanStroppel4} and obtained in particular 
that it is Koszul.
\end{rem}

\subsection{Flag varieties of small dimension}
In \cite{GeordieModInt} the $\IC$-sheaves on flag varieties of small dimension are examined. 
In particular it is shown there, that all $\IC^\O$-sheaves are parity for the following Dynkin diagrams:
\begin{itemize}
\item Type $A_n$ for $n \leq 6$ and any $l$.
\item Type $B_2$ and $l\neq 2$.
\item Type $D_4$ and $l\neq 2$.
\item Type $G_2$ and $l\neq 2,3$.
\end{itemize}
Hence \ref{MainThmPartial} can be applied in all of these cases.

\subsection{Category $\cal O$}
Let $G \supset B \supset T$ be a semisimple simply connected split algebraic group over $\F$, along with a Borel and a maximal torus.
Assume that $l$ is bigger than the Coxeter number.
Recall from \cite{SoergelPosChar} that we have the following objects in this setting:
\begin{itemize}
 \item A category $\cal O$. It is a certain subquotient of the category of rational $G$-representations.
The simple representations $L_w$ in $\cal O$ are parametrized by the Weyl group $W$.
They admit projective covers $P^{\cal O}_w$.
\item The algebra $C = \mathrm{Sym}(\mathfrak h)\otimes_{ \mathrm{Sym}(\mathfrak h)^W} \F$ of coinvariants. If 
we give it a grading by setting $\deg \mathfrak h=2$, then
it is canonically isomorphic 
to the cohomology ring of the complex flag variety of the Langlands dual to $G$.
$$\H^\bullet(G^L/B^L,\F)\cong C$$
\item A Soergel functor $\V:\cal O \rar C\-\mod$.
\item The hypercohomology functor $\mathbb H:\cal P_{(B^L)}(G^L/B^L,\F)\rar C\-\grmod$.
\item The subcategory $\cal C \subset C\-\grmod$ of graded Soergel modules. Its indecomposables up to shift are parametrized by $W$ and denoted by $D_w$.
By $|.|:\cal C \rar |\cal C|$ we denote the functor which forgets the grading.
\end{itemize}
The core of \cite{SoergelPosChar} is following:
\begin{thm}\label{thmSoergelPosCharMain}
In the above situation we have $\V(P_w^{\cal O})=|D_w|$ and $\mathbb H(\cal E_w)=D_w$, where $\cal E_w$ denotes the parity sheaf corresponding 
to the Bruhat cell $X_w$.
$$
\begin{xy}
\xymatrix{
\mathcal O \ar[rr]^{\mathbb V} && C\-\mod & \ar[l]  C\-\grmod && \ar[ll]_{\mathbb H} \cal P_{(B^L)}(G^L/B^L,\F) \\
 P_w^{\cal O} \ar@{|->}[rr] && |D_w| & \ar@{|->}[l] D_w && \ar@{|->}[ll] \cal E_w
 }
\end{xy}
$$
In addition $\V$ is fully faithful, when restricted to projectives and $\mathbb H$ is fully faithful on parity sheaves.
In particular for $P^{\cal O}:=\bigoplus P_w^{\cal O}$ and $\cal E:=\bigoplus \cal E_w$ we have:
$$\End(P^{\cal O}) \cong \Ext^\bullet(\cal E)$$
\begin{proof}
 \cite[2.6.1 and 4.2.1 and 2.8.2]{SoergelPosChar} 
\end{proof}
\end{thm}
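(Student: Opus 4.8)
The plan is to reprove this by reconstructing, following \cite{SoergelPosChar}, the graded Soergel category $\cal C\subset C\-\grmod$ as a common bridge, and then checking that $\V$ (on the representation side) and $\mathbb H$ (on the geometric side) identify their sources with $|\cal C|$, respectively $\cal C$, in a compatible way. On the representation side the crucial input is Soergel's \emph{Struktursatz} for $\cal O$, valid here because $l$ exceeds the Coxeter number: the functor $\V$ is fully faithful on projective objects. First I would prove an \emph{Endomorphismensatz} for the big projective--injective object $T\in\cal O$ (a tilting object), namely $\End(T)\cong C$, realized by $\V$, together with $\V(T)\cong C$ the regular module --- it is precisely the bound on $l$ that forces the relevant block to behave as over a field of characteristic zero. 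Since every indecomposable projective of $\cal O$ is a direct summand of some $\theta_{s_1}\cdots\theta_{s_r}T$ for wall-crossing functors $\theta_s$, and $\V\theta_s\cong(C\otimes_{C^s}\_)\circ\V$ (restriction to $C^s$, then induction), full faithfulness on projectives reduces to the Endomorphismensatz by standard adjunction bookkeeping; the same formula shows that $\V$ sends projectives to (ungraded) Soergel modules. To pin down $\V(P^{\cal O}_w)=|D_w|$ I would match indecomposables: $\V(P^{\cal O}_w)$ is the indecomposable Soergel module that first occurs in the Bott--Samelson module of a reduced expression for $w$, which is precisely $D_w$.

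On the geometric side the key point is that $\mathbb H$ is fully faithful on parity sheaves. Since each $\cal E_w$ is $\ast$-even and $!$-even up to a shift, $\mathbb H(\cal E_w)=\H^\bullet(G^L/B^L,\cal E_w)$ is a free graded $C$-module, and the spectral sequence computing $\bigoplus_n\Hom_{\D^b}(\cal E_w,\cal E_{w'}[n])$ out of $\grHom_C(\mathbb H\cal E_w,\mathbb H\cal E_{w'})$ degenerates. This degeneration --- an equivariant-formality statement for parity sheaves on the $B^L$-stratified flag variety, in the spirit of \cite{JMWparity} --- is the step I expect to be the main obstacle, since it is exactly what replaces the weight and purity arguments one would have had over $\K$. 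Granting it, $\Ext^\bullet(\cal E_w,\cal E_{w'})\cong\grHom_C(\mathbb H\cal E_w,\mathbb H\cal E_{w'})$. To see $\mathbb H(\cal E_w)=D_w$, I would resolve the closure of the $w$-stratum by a Bott--Samelson map $\pi$; then $\mathbb H(\pi_*\F)$ is the associated Bott--Samelson module over $C$, and $\mathbb H(\cal E_w)$, being the indecomposable summand that carries the top class, must be $D_w$.

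Finally I would assemble the pieces. Combining $\V(P^{\cal O}_w)=|D_w|$ with full faithfulness of $\V$ on projectives, and $\mathbb H(\cal E_w)=D_w$ with full faithfulness of $\mathbb H$ on parity sheaves, yields a $\Z$-grading on $\End(P^{\cal O})$ together with isomorphisms of graded rings
$$\End(P^{\cal O})\cong\End_C\left(\bigoplus_w|D_w|\right)\cong\grEnd\left(\bigoplus_w D_w\right)\cong\Ext^\bullet(\cal E),$$
and the remaining assertions of the statement are recorded along the way. Apart from the geometric degeneration above, the only genuinely substantial input is Soergel's positive-characteristic Struktursatz; everything else is formal manipulation of translation and wall-crossing functors and their combinatorial shadows.
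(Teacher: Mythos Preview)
Your proposal is correct and essentially reconstructs the argument in \cite{SoergelPosChar} that the paper merely cites; the paper offers no proof of its own beyond that reference. Your outline of the Endomorphismensatz/Struktursatz on the representation side and the hypercohomology degeneration on the parity-sheaf side faithfully mirrors the structure of Soergel's original arguments.
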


\begin{cor}
Assume that $l>\mathrm{wr}(G^L/B^L)$ and that the $IC_w^\O \in \P_{(B^L)}(G^L/B^L,\O)$ are all parity.
Then there exist Koszul dual rings $A,A^!$ such that
$$\mod\-A \cong \P_{(B^L)}(G^L/B^L,\F) \text{ and } A^!\-\mod \cong \cal O$$
\begin{proof}
By \ref{thmMainModKoszul} we know, that $A:=\End(P^\F)$ admits a Koszul grading. Its Koszul dual ring is 
by definition $A^!=\Ext^\bullet(IC^\F)^{op}$, which coincides with $\End(P^{\cal O})^{op}$ by \ref{thmSoergelPosCharMain}.
\end{proof}
\end{cor}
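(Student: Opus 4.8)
The plan is to chain together \ref{MainThmPartial}, the classical Koszul duality formalism of \cite{BGS}, and Soergel's structure theorem \ref{thmSoergelPosCharMain}; no new geometric input is required. First I would apply \ref{MainThmPartial} to the complex flag variety $X:=G^L/B^L$. Its hypotheses hold: the $\IC^\O_w$ are parity by assumption, and $l>\mathrm{wr}(X)$ by assumption (so that, after the choice of a suitable $p$ furnished by Dirichlet's theorem, $\mathrm{wt}(X)$ is separated and \ref{thmMainModKoszul} applies). This produces a Koszul ring together with an equivalence $\mod\-A\cong\P_{(B^L)}(G^L/B^L,\F)$; concretely we take $A:=\End(P^\F)$, equipped with the grading obtained by base change to $\F$ from the $\O$-Koszul grading on $\End(P^\O)$ of \ref{thmMainModKoszul}. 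This already gives the first of the two asserted equivalences.

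Next I would define $A^!$ to be the Koszul dual ring of $A$. By the general theory of \cite{BGS}, since $A$ is Koszul its Koszul dual $A^!=\Ext^\bullet_A(A_0,A_0)^{op}$, with its intrinsic cohomological grading, is again Koszul and satisfies $(A^!)^!\cong A$; hence $A$ and $A^!$ are Koszul dual rings in the sense required by the statement. To make $A^!$ explicit, recall from the proof of \ref{thmMainModKoszul} that under the Morita equivalence of \ref{thmMoritaProj} the simple object $\IC^\F_w$ corresponds to $L_w$. Using \ref{thmRealEquiv} to pass freely between $\Ext$-groups computed in $\P_{(B^L)}(X,\F)$ and those computed in the constructible derived category, we obtain $\Ext^\bullet_A(A_0,A_0)\cong\Ext^\bullet(\IC^\F)$, and therefore $A^!\cong\Ext^\bullet(\IC^\F)^{op}$.

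It then remains to identify $A^!\-\mod$ with $\cal O$, and this is exactly where the parity hypothesis is used a second time. Since $\IC^\O_w$ is parity we have $\F\otimes\IC^\O_w=\IC^\F_w$ and this sheaf is parity, so by uniqueness of parity sheaves \cite{JMWparity} it is the parity sheaf $\cal E_w$. Consequently Soergel's isomorphism $\End(P^{\cal O})\cong\Ext^\bullet(\cal E)$ of \ref{thmSoergelPosCharMain} reads $\End(P^{\cal O})\cong\Ext^\bullet(\IC^\F)$, so that $\End(P^{\cal O})^{op}\cong A^!$. Finally $P^{\cal O}$ is a projective generator of $\cal O$, so \ref{thmMoritaProj} gives $\cal O\cong\mod\-\End(P^{\cal O})\cong\mod\-(A^!)^{op}=A^!\-\mod$, which completes the proof.

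The substance is entirely contained in the two black boxes \ref{thmMainModKoszul} and \ref{thmSoergelPosCharMain}; everything else is bookkeeping, and in particular there is no serious obstacle. The one point that deserves a moment's care is the matching of gradings: one must check that the grading carried by $\End(P^{\cal O})$ via Soergel's identification with $\Ext^\bullet(\cal E)$ is precisely the grading under which $A^!$ is the Koszul dual of $A$. But $A^!$ was defined as $\Ext^\bullet_A(A_0,A_0)^{op}$ with its cohomological grading, and under $\cal E=\IC^\F$ Soergel's map is an isomorphism of graded rings, so the two gradings agree on the nose and nothing further is needed.
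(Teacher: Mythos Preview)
Your argument is correct and follows essentially the same route as the paper's proof: invoke \ref{thmMainModKoszul} to get the Koszul grading on $A=\End(P^\F)$, identify $A^!$ with $\Ext^\bullet(\IC^\F)^{op}$, and then match this with $\End(P^{\cal O})^{op}$ via \ref{thmSoergelPosCharMain}. You have simply made explicit the steps the paper leaves implicit, notably the identification $\IC^\F_w=\cal E_w$ via parity and the final Morita equivalence for $\cal O$.
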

A very similar statement was obtained in \cite[7.3.2.]{RSW}.

% \bibliographystyle{alpha}
% 
% 
% \bibliography{referenzen}
% 
% \end{document}

\appendix

\section{Grassmannians}
In this appendix we use 
the combinatorial framework of \cite{KLPolyGrass} to compute the set of weights $\mathrm{wt}(X)$ for a Grassmannian $X=\Gr(n,k)$.
Alternatively one could extract this information from \cite{BrundanStroppel2}.

The Schubert cells are in bijection with Young diagrams, which fit into a $k \times (n-k)$-rectangle.
In other words, they are in bijection with sequences of integers $\l=(\l_1,\ldots, \l_k)$ such that
$$n-k\geq \l_1 \geq \ldots \geq \l_k\geq 0$$
The bijection goes as follows:
Given $\l$ let $t_i:=\l_{k+1-i}+i$ and associate to it the cell
$$X_\l:=\{V\subset K^n| \dim V\cap (K^{t_i}\times 0)=i\}$$
Here $K$ is the field over which we consider the Grassmannian.
The dimension of $X_\l$ is given by the number of boxes of the corresponding Young diagram.

There is a combinatorial recipe in \cite{KLPolyGrass}, which allows to compute any graded multiplicity $[\wt \Delta_\mu : \wt{\IC}_\l]$ in the case of Grassmannians.
We will quickly review it. We start by recalling notation from \cite{KLPolyGrass}:

Let $\mathbf{P}:=\Z_{ >0}$ be the set of positive integers. 
A partition $\l=(\l_1,\ldots,\l_k)$ is a finite sequence of positive integers such that $\l_1 \geq \l_2 \geq \ldots \geq\l_k$.
In the case of Grassmannians sequences of the form $\l=(\l_1,\ldots,\l_k)$ with $\l_1 \geq \l_2 \geq \ldots \geq\l_k$, but where some of the $\l_i$ are zero
play a role. We will identify such a sequence with the partition obtained by deleting all zeros.

Given a partition $\l=\l_1 \geq \ldots \geq \l_k$ the corresponding Young diagram is defined to be the subset:
$$\{ (i,j) \in \mathbf{P}^2| i\leq \l_j\}$$
For example the partition $\lambda:=(3,2,2,1)$ corresponds to the diagram
$$
\begin{tikzpicture}[rotate=45]
 % Grid
 \draw (0,2) grid +(1,1);
 \draw (0,0) grid +(3,2);
 \draw (3,0) grid +(1,1);
\end{tikzpicture}
$$
From now on we will identify a partition with its Young diagram.
The set of Young diagrams is partially ordered by inclusion relation. A skew partition $\theta$ is a subset of $\mathbf{P}^2$,
which can be written as $\theta=\l -\mu$ where $\l\geq \mu$ are partitions. For example $(3,2,2,1)-(1,1,1)$ is a skew partition, which we visualize
as
$$
\begin{tikzpicture}[rotate=45]
 % Grid
 \draw (0,1) grid +(1,2);
 \draw (1,1) grid +(2,1);
 \draw (3,0) grid +(1,1);
\end{tikzpicture}
$$
We say that two skew partitions $\l-\mu$ and $\l'-\mu'$ are equivalent, if they are the same up to translation. More precisely
they are equivalent, if there exist $t,t' \in \N^2$ such that 
$$(\l-\mu)+t=(\l'-\mu')+t'$$
We say that a skew partition $\l-\mu$ is in normal form, if it is as close to the origin as possible.
More precisely we demand, that every equivalent skew partition is of the form $\l-\mu+t$ for some $t \in \N^2$.
Every skew partition is equivalent to a unique skew partition in normal form.

A border strip is a skew partition, which contains no $2\times 2$ square. 
A skew partition is connected, if for any two boxes there is a path inside the diagram which connects them and which does not 
pass through any corners of boxes. For example the above $(3,2,2,1)-(1,1,1)$ is not connected, while $(3,2,2,1)-(1,1)$ is connected.
We define the empty skew partition to be disconnected, so that any skew partition can be uniquely decomposed into its connected components.

Given $\mu \subset \l \supset \nu$ we say that $\l-\nu$ is a final segment in $\l-\mu$ if $\mu \subset \nu$. The outer border strip of $\l-\mu$ 
is defined to be the largest final segment, which is a border strip. The following picture indicates the outer border strip of $(4,3,2,2)-(2,2)$,
which happens to be disconnected:
$$
\begin{tikzpicture}[rotate=45]
 % Grid
 \draw (0,2) grid +(1,1);
 \draw (0,1) grid +(2,1);
 \draw (2,-1) grid +(2,2);
 % Kurve
 \draw [thick,rounded corners] (0.5,2.5) -- (0.5,1.5) -- (1.5,1.5) ;
\draw [thick,rounded corners] (2.5,0.5) -- (3.5,0.5) -- (3.5,-0.5);
\end{tikzpicture}
$$
Let $\theta$ be a connected border strip, cbs for short. We say that $\theta$ is a Dyck cbs, if its rightmost and leftmost cell have the same level and no 
cell has strictly smaller level. Here the level of $(i,j)\in \mathbf{P}^2$ is by definition $i+j$.
For example the connected border strips
$$
\begin{tabular}{l l}
$$
\begin{tikzpicture}[rotate=45]
 \draw (0,2) grid +(3,1);
\draw (2,0) grid +(1,2);
\end{tikzpicture}
$$
& 
$$
\begin{tikzpicture}[rotate=45]
 \draw (0,2) grid +(2,1);
\draw (1,1) grid +(2,1);
\draw (2,0) grid +(1,1);
\end{tikzpicture} 
$$
\\

\end{tabular}
$$
are both Dyck. On the other hand the following two cbs are not Dyck:
$$
\begin{tabular}{l l}
$$
\begin{tikzpicture}[rotate=45]
\draw (0,2) grid +(2,1);
\draw (1,1) grid +(2,1);
\draw (2,0) grid +(2,1);
\end{tikzpicture}
$$
&
$$
\begin{tikzpicture}[rotate=45]
\draw (0,3) grid +(2,1);
\draw (1,1) grid +(1,2);
\draw (2,1) grid +(2,1);
\draw (3,0) grid +(1,1);
\end{tikzpicture}
$$
\\
\end{tabular}
$$
Let $n\in \N$. Then we define the set of Dyck skew partitions, to be those skew partitions which are obtained
by layering multiple Dyck cbs. More precisely being Dyck can be defined recursively:
\begin{defi}
The collection of Dyck skew partitions is the smallest $\N$-indexed system of sets of skew partition satisfying the following axioms:
\begin{enumerate}
\item The empty partition $\emptyset$ is Dyck of depth $0$.
\item Any Dyck cbs is a Dyck skew partition of depth $1$.
\item If $\eta$ is a skew partition with connected components $\eta^1,\ldots, \eta^k$ which are Dyck of depths $n_1,\ldots, n_k$ then 
$\eta$ is Dyck of depth $\sum n_i$.
\item If $\eta$ is connected, its outer border strip is Dyck of depth $a$ and $\eta'$ is Dyck of depth $a'$
then $\eta$ is Dyck of depth $a+a'$.
Here $\eta'$ denotes the skew partition which is obtained from $\eta$ by removing its outer border strip.

Given a Dyck skew partition $\eta$, we denote its depth by $\mathrm{dp}(\eta)$.
\end{enumerate}
\end{defi}
For example the diagram
$$
\begin{tikzpicture}[rotate=45]
 % Grid
 \draw (0,2) grid +(3,3);
 \draw (2,0) grid +(3,3);
 % Kurve
\draw [thick,rounded corners] (0.5,4.5) -- (2.5,4.5) -- (2.5,2.5) -- (4.5,2.5) -- (4.5,0.5);
\draw [thick,rounded corners] (0.5,3.5) -- (1.5,3.5) -- (1.5,2.5);
\draw [thick,rounded corners] (2.5,1.5) -- (3.5,1.5) -- (3.5,0.5);
\fill (0.5,2.5) circle (1.5pt);
\fill (2.5,0.5) circle (1.5pt);
\end{tikzpicture}
$$
is Dyck of depth five. On the other hand the following skew partition is not Dyck. The problem arises in the third recursion:

%% NON DYCK
$$
\begin{tikzpicture}[rotate=45]
 % Grid
 \draw (0,3) grid +(3,1);
 \draw (0,0) grid +(4,3);
 % Kurve
\draw [thick,rounded corners] (0.5,3.5) -- (2.5,3.5) -- (2.5,2.5) -- (3.5,2.5) -- (3.5,0.5);
\draw [thick,rounded corners] (0.5,2.5) -- (1.5,2.5) -- (1.5,1.5) -- (2.5,1.5) -- (2.5,0.5);
\end{tikzpicture}
$$
%%DYCK OF DEPTH 5
Note that the Dyck property and depth are invariant under equivalence.

We now introduced enough terminology to state an equivalent of the main result of \cite{KLPolyGrass}:
\begin{thm}
 The graded multiplicity $[\wt \Delta_{\l}:\wt{\IC}_\mu]$ is zero if $\l -\mu $ is not Dyck. If $\l -\mu $ is Dyck we have
$$[\wt \Delta_{\l}:\wt{\IC}_\mu]= \mathrm{\bf v}^{\mathrm{dp} ( \l - \mu)}$$
\begin{proof}
 \cite[Thm 5.1.]{KLPolyGrass}
\end{proof}
\end{thm}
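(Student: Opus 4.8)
The plan is to identify $[\wt\Delta_\l:\wt{\IC}_\mu]$ with the quantity computed by the main theorem of \cite{KLPolyGrass} and then to spend the real effort on matching the two bookkeeping conventions. First I would invoke the classical dictionary between geometry and Kazhdan-Lusztig combinatorics: since $X$ is $\BGS$, every object in sight is Tate and $K(\wt\P)$ is free over $\Z[\v,\v^{-1}]$ on the classes $[\wt{\IC}_\mu]$ as well as on the classes $[\wt\Delta_\mu]$; the stalks of $\IC_\mu$ along $X_\l$ have Poincar\'e polynomials equal to (parabolic) Kazhdan-Lusztig polynomials, so inverting the resulting unitriangular change of basis expresses $[\wt\Delta_\l]$ in the $\IC$-basis with coefficients $[\wt\Delta_\l:\wt{\IC}_\mu]$ equal, up to the $\langle -d\rangle$-normalization of \ref{LemKeyPos} and the $\v\leftrightarrow\v^{-1}$ convention fixed in our \emph{Notation}, to the corresponding inverse parabolic Kazhdan-Lusztig polynomials of $\Gr(k,n)$. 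In particular these coefficients lie in $\N[\v,\v^{-1}]$; the nontrivial extra input, and essentially the whole content of \cite{KLPolyGrass}, is that for Grassmannians such a polynomial is always $0$ or a single monomial, so that the task reduces to pinning down its support and its exponent.

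Second, I would translate indexing and statistics. Our labelling of cells by Young diagrams in the $k\times(n-k)$ box via $\l\mapsto(t_i)$ with $t_i=\l_{k+1-i}+i$ turns the closure order into inclusion of diagrams and $\dim X_\l$ into the number of boxes of $\l$, so a pair $\mu\subseteq\l$ is recorded on the reference side by the skew shape $\l-\mu$, which is precisely the combinatorial input of \cite{KLPolyGrass}. It then remains to check that their vanishing condition is our ``$\l-\mu$ is Dyck'' (empty shape; a single Dyck connected border strip; additivity over connected components; additivity under peeling off the outer border strip) and that their exponent is our depth $\mathrm{dp}(\l-\mu)$. The recursive shape of our definition is not accidental: it mirrors the stratified structure of Zelevinsky's small resolution of $\ol{X_\l}$, whose fibres are again products of smaller Grassmannian-type Schubert varieties, and this is what produces additivity of depth over connected components and over successive outer border strips, together with the ``level'' bookkeeping (level of $(i,j)$ being $i+j$). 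Concretely one can either (a) read off \cite[Thm 5.1.]{KLPolyGrass} and match its base cases and recursion term by term with ours, or (b) run the induction directly on the number of strata: peel off an open cell, use the recollement triangle for $\wt\Delta_\l$ together with the small-resolution computation of the relevant $\wt{\IC}$'s, and compare with the recursion defining $\mathrm{dp}$.

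The step I expect to be the main obstacle is precisely this convention-matching: deciding whether \cite{KLPolyGrass} computes $[\Delta:\IC]$, $[\IC:\Delta]$, $[\nabla:\IC]$ or an inverse of one of these; in which direction its indeterminate runs relative to our $\v$; and whether its orientation of ``border strip'', ``outer'', ``Dyck'' and ``normal form'' agrees with the one fixed above (recall that we consider skew shapes only up to translation and then pass to normal form). The cleanest way to settle all of this is to test both sides on a few small shapes -- a single box, a hook, and the first disconnected outer border strip that occurs, such as $(4,3,2,2)-(2,2)$ -- after which the identity $[\wt\Delta_\l:\wt{\IC}_\mu]=\v^{\mathrm{dp}(\l-\mu)}$, and its ``$\l-\mu$ not Dyck'' vanishing counterpart, follow. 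As a consistency check, feeding these monomials into $[\wt P_\l:\wt{\IC}_\mu]=\sum_\nu[\wt\nabla_\nu:\wt{\IC}_\mu]^{*}[\wt\Delta_\nu:\wt{\IC}_\l]$ should reproduce $\mathrm{wt}(\Gr(k,n))=\{1,\q,\ldots,\q^{\min(k,n-k)}\}$ as claimed in \ref{exbd}.
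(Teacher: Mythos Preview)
Your proposal is correct and takes essentially the same approach as the paper: the paper's entire proof is the bare citation \cite[Thm 5.1.]{KLPolyGrass}, so your plan to invoke that result and then match conventions is exactly what is being asked for, only spelled out in more detail than the paper bothers to give.
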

Hence we need to investigate the possible values of $\mathrm{dp}(\l - \mu)$.
\begin{prop}\label{propBoundGrass}
The following two sets coincide:
$$\{0,1,\ldots,  \min(k,n-k) \} = \{ \mathrm{dp}(\l-\mu) | \l-\mu \text{ is Dyck and $\l \subset k \times (n-k)$ }\} $$
\begin{proof}
Let $0\leq i \leq \min(k,n-k)$ and $\l:=(i,\ldots,i)$ be a square of $i^2$ many boxes and $\mu:=\emptyset$.
Then $\l=\l-\mu$ is Dyck of depth $i$.
$$
\begin{tikzpicture}[rotate=45]
 % Grid
 \draw (0,0) grid +(4,4);
 % Kurve
\draw [thick,rounded corners] (0.5,3.5) -- (3.5,3.5) -- (3.5,0.5);
\draw [thick,rounded corners] (0.5,2.5) -- (2.5,2.5) -- (2.5,0.5);
\draw [thick,rounded corners] (0.5,1.5) -- (1.5,1.5) -- (1.5,0.5);
\fill (0.5,0.5) circle (1.5pt);
\end{tikzpicture}
$$
 This shows that the left hand side is contained in the right hand side. 

By symmetry the other inclusion is implied by the following observation:
\begin{claim}
 $\mathrm{dp}(\l-\mu)\leq \l_1$ for all Dyck skew partitions $\l-\mu$.
\begin{proof}
We prove the claim by recursion on $\mathrm{dp}(\l-\mu)$:
\begin{itemize}
 \item For $\mathrm{dp}(\l-\mu)=0$ we have
$$\mathrm{dp}(\l)=0 \leq \l_1$$
\item If $\l-\mu$ is disconnected with components with normal form $\l^i-\mu^i$ we have 
$$\mathrm{dp}(\l-\mu)= \sum \mathrm{dp}(\l^i-\mu^i) \leq \sum \l^i_1 \leq \l_1$$
\item If $\eta=\l-\mu$ is connected and $\theta$ is the leftmost connected component of its outer border strip, we have
$$\mathrm{dp}(\eta)=\mathrm{dp}(\eta')+1 \leq (\l_1-1) +1 =\l_1-1+1=\l_1$$
Here we used the notation $\eta'$ for the skew partition $\eta$ with $\theta$ removed. 
\end{itemize}
\end{proof}
\end{claim}
\end{proof}
\end{prop}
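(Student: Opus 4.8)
The plan is to prove the two inclusions separately, exploiting the recursive definition of the depth $\mathrm{dp}$ of a Dyck skew partition. For the inclusion $\subseteq$ I would, for each $i$ with $0\le i\le \min(k,n-k)$, exhibit a Dyck skew partition of depth exactly $i$ sitting inside the $k\times(n-k)$ rectangle. The natural candidate is the $i\times i$ square $\l:=(i,i,\dots,i)$ (with $i$ rows) and $\mu:=\emptyset$; it clearly has at most $k$ rows and at most $n-k$ columns, so $\l\subo k\times(n-k)$. To read off its depth I would peel off its outer border strip, which for a square is the hook formed by the last row and last column: a single connected border strip whose two extremal cells share a level and below which the strip never descends, i.e. a Dyck connected border strip of depth $1$. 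Removing it leaves the $(i-1)\times(i-1)$ square, so clause~(4) of the definition gives $\mathrm{dp}\big((i^{i})\big)=1+\mathrm{dp}\big(((i-1)^{i-1})\big)$, and induction yields $\mathrm{dp}\big((i^{i})\big)=i$.

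For the inclusion $\supseteq$ I must check that every Dyck $\l-\mu$ with $\l\subo k\times(n-k)$ satisfies $0\le \mathrm{dp}(\l-\mu)\le \min(k,n-k)$; only the upper bound needs work. I would reduce it to the single inequality
$$\mathrm{dp}(\l-\mu)\le \l_1 .$$
Granting this, $\l\subo k\times(n-k)$ forces $\l_1\le n-k$; and applying the same inequality to the transposed skew partition $(\l-\mu)^{t}$ — using that being Dyck and the value of the depth are invariant under transposition, since the level $i+j$ of a cell, the border-strip condition (no $2\times 2$ block), connectedness, and the formation of outer border strips are all symmetric in the two coordinates — gives $\mathrm{dp}(\l-\mu)=\mathrm{dp}\big((\l-\mu)^{t}\big)\le \ell(\l)\le k$. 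Hence $\mathrm{dp}(\l-\mu)\le \min(k,n-k)$.

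To prove $\mathrm{dp}(\l-\mu)\le \l_1$ I would induct following the clauses of the recursive definition of depth. If $\mathrm{dp}(\l-\mu)=0$ there is nothing to show. If $\l-\mu$ is disconnected with components $\l^{(i)}-\mu^{(i)}$ in normal form, then $\mathrm{dp}(\l-\mu)=\sum_i \mathrm{dp}(\l^{(i)}-\mu^{(i)})\le \sum_i \l^{(i)}_1\le \l_1$, the last step because distinct components of a skew shape occupy weakly separated ranges of columns, so their widths sum to at most the width $\l_1$ of the top row. If $\l-\mu$ is connected, I would strip off the connected components of its outer border strip one at a time, beginning with the component containing the rightmost cell of the first row; each such piece is a Dyck connected border strip, so each peel lowers the depth by exactly one and preserves Dyck-ness, while the first peel empties the last column and hence leaves a shape with first part at most $\l_1-1$, at which point the induction hypothesis closes the argument.

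The step I expect to be the main obstacle is exactly this last one: legitimizing the "peel one component of the outer border strip at a time" manoeuvre. The recursive definition only sanctions removing the \emph{entire} outer border strip in one go, so one must check carefully that deleting a single connected component of the outer border strip of a connected Dyck shape again yields a Dyck shape, of depth one less, and that the peeling can be ordered so that the first row shrinks at the right moment — equivalently, one could phrase it as showing that $\eta$ minus its full outer border strip has first part at most $\l_1-c$, where $c$ is the number of connected border-strip components removed. Verifying the transpose-invariance of $\mathrm{dp}$ through the whole recursive definition, and the column-separation estimate used in the disconnected case, are the remaining — routine but not entirely trivial — points.
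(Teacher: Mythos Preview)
Your proposal is correct and follows essentially the same route as the paper: the $i\times i$ square for the inclusion $\subseteq$, the reduction of $\supseteq$ to the inequality $\mathrm{dp}(\lambda-\mu)\le\lambda_1$ together with the transpose symmetry, and the same three-case induction along the recursive definition of depth. The only cosmetic difference is which connected component of the outer border strip is peeled first (the paper takes the leftmost, you take the one meeting the last column); your caution about justifying the ``peel one component at a time'' manoeuvre is well placed---the paper uses the identity $\mathrm{dp}(\eta)=\mathrm{dp}(\eta')+1$ in exactly the same way without further comment, so your suggested reformulation via $\mathrm{dp}(\eta'')\le\lambda_1-c$ (remove the whole outer border strip at once) is in fact a cleaner way to close that step.
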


\begin{cor}\label{CorWeightsOnGrass}
 Let $X=\Gr(n,k)$ be a Grassmannian equipped with the stratification into Bruhat cells. Then we have
$$\mathrm{wt}(X)=\{1,\q,\ldots, \q^{\min(k,n-k)} \} \text{ and } \mathrm{wr}(X)= \min(k,n-k)+1$$
\end{cor}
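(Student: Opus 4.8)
The plan is to unwind the definition $\mathrm{wt}(X)=\mathrm{wt}(\End(P^\K))$ and reduce the whole statement to the combinatorial estimate already proven in Proposition \ref{propBoundGrass}. By Definition \ref{defWtX}, $\mathrm{wt}(X)$ is the minimal subset $M\subseteq \mathrm{Mon}$ with $1\in M$ such that, for every pair of strata $\l,\mu$, the set of Frobenius eigenvalues on $\Hom(P^\K_\l,P^\K_\mu)$ lies, after multiplication by a single monomial, inside $M$. By Corollary \ref{corProjCoincide} we have $P^{\BGS}_\l=\K\otimes P^\O_{\l,0}$, so via the grading-versus-Frobenius dictionary \ref{LemGradingFrobCorres} these eigenvalue sets are precisely the monomials occurring in the graded multiplicities $[P^{\BGS}_\mu:\IC^{\BGS}_\l]$ (as recorded in the remark following Definition \ref{defWtX}); up to an overall monomial shift, which is irrelevant for $\mathrm{wt}$, this support agrees with that of $\grdim\Hom(\wt P_\l,\wt P_\mu)$, which by Lemma \ref{lemWeightsEndP} equals $\sum_\nu [\wt\Delta_\nu:\wt\IC_\mu][\wt\Delta_\nu:\wt\IC_\l]$.

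Next I would feed in the combinatorics of \cite{KLPolyGrass} recalled above: $[\wt\Delta_\nu:\wt\IC_\kappa]=\v^{\mathrm{dp}(\nu-\kappa)}$ when $\kappa\subseteq\nu$ and $\nu-\kappa$ is Dyck, and $0$ otherwise. Hence $\grdim\Hom(\wt P_\l,\wt P_\mu)=\sum_\nu \v^{\mathrm{dp}(\nu-\mu)+\mathrm{dp}(\nu-\l)}$, the sum running over diagrams $\nu$ with $\l,\mu\subseteq\nu\subseteq k\times(n-k)$ for which both $\nu-\l$ and $\nu-\mu$ are Dyck. Writing $m:=\min(k,n-k)$, Proposition \ref{propBoundGrass} gives $\mathrm{dp}(\nu-\kappa)\le m$ for each such Dyck skew shape, so every exponent occurring lies in $\{0,1,\dots,2m\}$. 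Since the spaces $\Hom(P^\K_\l,P^\K_\mu)$ carry honest powers of $q$ as Frobenius eigenvalues and the relevant $\v$-exponents are, up to a fixed shift, twice the corresponding $\q$-exponents, the eigenvalue set of each $\Hom(P^\K_\l,P^\K_\mu)$ occupies at most $m+1$ consecutive monomials. Therefore $\{1,\q,\dots,\q^m\}$ satisfies both requirements of Definition \ref{defWtX}, so $\mathrm{wt}(X)$ may be taken with at most $m+1$ elements and $\mathrm{wr}(X)\le m+1$.

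For the matching lower bound, and to pin $\mathrm{wt}(X)$ down exactly, I would examine the summand $\End(P^\K_\emptyset)$ attached to the point stratum $\emptyset$. Here $d_\emptyset=0$, so $\wt P_\emptyset=P^{\BGS}_\emptyset$ and $\grdim\End(P^{\BGS}_\emptyset)=\sum_{\nu\ \mathrm{Dyck},\ \nu\subseteq k\times(n-k)}\v^{2\,\mathrm{dp}(\nu)}$; thus the Frobenius eigenvalue set on $\End(P^\K_\emptyset)$ is $\{q^{\mathrm{dp}(\nu)}\}$, which by the square-diagram half of Proposition \ref{propBoundGrass} (the shapes $\nu=(i,\dots,i)$ for $0\le i\le m$) is exactly $\{1,q,\dots,q^m\}$ — a block of $m+1$ consecutive monomials genuinely containing $1$, coming from the term $\nu=\emptyset$, i.e.\ from $\mathrm{id}_{P^\K_\emptyset}$. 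Plugging this into Definition \ref{defWtX}, together with $\mathrm{wt}(X)\subseteq\mathrm{Mon}$ (only non-negative exponents) and $1\in\mathrm{wt}(X)$, forces the monomial shift for this summand to be trivial and hence $\mathrm{wt}(X)\supseteq\{1,\q,\dots,\q^m\}$; combined with the upper bound this gives $\mathrm{wt}(X)=\{1,\q,\dots,\q^{\min(k,n-k)}\}$ and $\mathrm{wr}(X)=\min(k,n-k)+1$.

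The one genuinely substantive ingredient is Proposition \ref{propBoundGrass}, which is already established, so the remaining work is bookkeeping. The step demanding the most care is keeping the conventions consistent — the sign of the grading in the \cite{KLPolyGrass} formula for $[\wt\Delta_\nu:\wt\IC_\kappa]$, the passage between $P^{\BGS}_\l$ and $\wt P_\l=P^{\BGS}_\l\langle -d_\l\rangle$, and the factor of two relating $\v$-degrees to Frobenius weights — and checking that the slightly under-determined ``smallest subset'' of Definition \ref{defWtX} is actually pinned to $\{1,\q,\dots,\q^m\}$ and not merely to a translate of it, which works exactly because $\End(P^\K_\emptyset)$ already contains both the eigenvalue $1$ and an eigenvalue $q^{\pm m}$, leaving no room to slide.
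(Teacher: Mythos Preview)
Your proposal is correct and follows exactly the route the paper intends: the corollary is stated in the paper without proof, as an immediate consequence of Proposition~\ref{propBoundGrass}, the Kazhdan--Lusztig formula from \cite{KLPolyGrass}, and Lemma~\ref{lemWeightsEndP}; you have unpacked precisely these ingredients, including the lower-bound check via $\End(P^\K_\emptyset)$, which mirrors the paper's use of $\End(P_e)$ in the analogous computation for the full flag variety (Proposition~\ref{propWeightsFullFlag}).
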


\section{The full flag variety}
In this appendix, we compute the set of weights for the full flag variety:
\begin{prop}\label{propWeightsFullFlag}
 Let $X=G/B$ then we have:
$$\mathrm{wt}(X)=\{1,\q,\ldots,\q^{\dim X} \}$$
\begin{proof}
In order to formulate the proof, we need some notation.
Let $P(\v)=\sum p_i \v^i$ and $Q(\v)=\sum q_i \v^i$ be two Laurent polynomials. Then we write $P \preceq Q$ or $Q \succeq P$ if 
$p_i \neq 0$ implies $q_i \neq 0$ for all $i\in \Z$. We write $P \sim Q$ if $Q\preceq P$ and $P \preceq Q$.
\begin{itemize}
 \item Let us show $\mathrm{wt}(X)\subset\{1,\q,\ldots,\q^{\dim X} \}$.
 By \cite[8.15]{HumphreysO} the Lowey length of a Verma module $\Delta_w$ is $l(w)+1$. Using 
the coincidence with the weight filtration \ref{LemProjWeightRad}, this implies 
$$[\wt\Delta_x:\wt{\IC}_y]  \preceq \sum_{i=0}^{l(x)} \v^{-i}$$
Hence using \ref{lemWeightsEndP} we compute:
\begin{align*}
 \grdim \, \grHom(\wt P_x, \wt P_y)&=\sum_w [\wt \Delta_w: \wt{\IC}_y][\wt \Delta_w:\wt{\IC}_x]\\
&\preceq \sum_w \sum_{i=0}^{l(w)} \sum_{j=0}^{l(w)} \v^{-i-j} \\
&\preceq \sum_{k=0}^{2 \dim X} \v^{-k}
\end{align*}
Now $\grHom(P_x,P_y)$ is concentrated in even degrees by \ref{LemGradingFrobCorres}.
This implies 
$$\mathrm{wt}(\Hom(P_x,P_y))\subseteq \{1,\q,\ldots ,\q^{\dim X} \} \text{ for all } x,y$$ 
Hence we have 
$$\mathrm{wt}(X) \subseteq \{1,\q,\ldots ,\q^{\dim X} \}$$
\item In order to establish the other inclusion, we only need to check that
$$\mathrm{wt}(\End(P_e)) \supseteq \{1,\q,\ldots ,\q^{\dim X} \}$$
By \cite[Lemma 2.1]{TiltingExercises}, the socle of $\Delta_w$ is  $\IC_e$ for all $w\in W$. Hence
\begin{align*}
\grdim \, \grEnd(P_e^{BGS}) &= \grdim \, \grEnd( \wt P_e)\\
&=\sum_w [\wt \Delta_w: \wt{\IC}_e][\wt \Delta_w:\wt{\IC}_e]\\
&\succeq \sum_w \v^{-2 l(w)} \\
&= 1+\q^{-1} \ldots +\q^{-\dim X}\\
\end{align*}
\end{itemize}
\end{proof}
\end{prop}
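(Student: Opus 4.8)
The plan is to translate the computation of $\mathrm{wt}(X)$ into a question about the $\v$-supports of the Laurent polynomials $\grdim\,\grHom(\wt P_x,\wt P_y)$, and to pin those supports down from above and below using classical facts about Verma modules in category $\cal O$. Recall that by \ref{LemGradingFrobCorres} the Frobenius eigenvalues on $\Hom(P_x^{\BGS},P_y^{\BGS})$ are exactly the $q^{-n}$ for which $\v^{-2n}$ occurs in $\grdim\,\grHom(P_x^{\BGS},P_y^{\BGS})$; in particular these spaces sit in even internal degree. By \ref{lemWeightsEndP} (applied to the normalized projectives, which for the closed stratum $X_e$ coincide with $P_e^{\BGS}$ since $d_e=0$) one has $\grdim\,\grHom(\wt P_x,\wt P_y)=\sum_w[\wt\Delta_w:\wt{\IC}_y]\,[\wt\Delta_w:\wt{\IC}_x]$, so everything reduces to controlling the graded multiplicities $[\wt\Delta_w:\wt{\IC}_y]$.

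For the inclusion $\mathrm{wt}(X)\subseteq\{1,\q,\ldots,\q^{\dim X}\}$ I would use that the Loewy length of the Verma module $\Delta_w$ is $l(w)+1$ (see \cite[8.15]{HumphreysO}). Since the radical filtration agrees with the weight filtration by \ref{LemProjWeightRad}, this forces $[\wt\Delta_w:\wt{\IC}_y]$ to be a polynomial in $\v^{-1}$ of degree at most $l(w)\le\dim X$. Substituting into the formula above bounds $\grdim\,\grHom(\wt P_x,\wt P_y)$ above by $\sum_{k=0}^{2\dim X}\v^{-k}$; combined with concentration in even degree, the Frobenius eigenvalue exponents of each summand $\Hom(P_x^{\BGS},P_y^{\BGS})$ of $\End(P^\K)$ then lie in an integer interval of length $\dim X+1$, which by the definitions of $\mathrm{wt}$ and $\mathrm{wr}$ gives $\mathrm{wt}(X)\subseteq\{1,\q,\ldots,\q^{\dim X}\}$.

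For the reverse inclusion it is enough to show that a single summand already realizes the full set, and I would take $\End(P_e^\K)=\grEnd(\wt P_e)$. The key input here is that $\Delta_w$ has simple socle $\IC_e$ for every $w\in W$ (\cite[Lemma 2.1]{TiltingExercises}): the bottom weight layer of $\wt\Delta_w$ is a semisimple submodule, hence lies in the socle, so it equals $\wt{\IC}_e$ and it sits in weight $-l(w)$ (again by \ref{LemProjWeightRad}). Therefore $\v^{-l(w)}$ occurs in $[\wt\Delta_w:\wt{\IC}_e]$, so $\v^{-2l(w)}$ occurs in $[\wt\Delta_w:\wt{\IC}_e]^2$. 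Summing over $w$ and using that $W$ has an element of every length $0,1,\ldots,l(w_0)=\dim X$ (take prefixes of a reduced word for $w_0$) yields $\grdim\,\grEnd(\wt P_e)\succeq 1+\v^{-2}+\cdots+\v^{-2\dim X}$. Hence the Frobenius eigenvalue exponents on $\End(P_e^\K)$ contain $\{0,1,\ldots,\dim X\}$, and together with the upper bound this forces $\mathrm{wt}(X)=\{1,\q,\ldots,\q^{\dim X}\}$.

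The only genuinely non-formal point is the bridge between category $\cal O$ and the mixed geometric picture: the inputs I rely on (Loewy length and socle of Verma modules) live on the representation-theoretic side, and it is Lemma \ref{LemProjWeightRad}, identifying the radical and weight filtrations, that lets them be used on the graded side; once that is available, both inclusions are bookkeeping with Laurent polynomials. A minor point to watch is that the normalization shifts $\langle-d_\l\rangle$ defining the $\wt P_\l$ could in principle disturb the parity of the eigenvalue exponents, which is why the lower bound is carried out for the $x=y=e$ summand, where $d_e=0$ and $\wt P_e=P_e^{\BGS}$.
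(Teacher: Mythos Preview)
Your proposal is correct and follows essentially the same route as the paper's own proof: both inclusions are obtained by bounding the $\v$-support of $\grdim\,\grHom(\wt P_x,\wt P_y)=\sum_w[\wt\Delta_w:\wt{\IC}_y][\wt\Delta_w:\wt{\IC}_x]$, using the Loewy length of Verma modules (via \cite[8.15]{HumphreysO} and \ref{LemProjWeightRad}) for the upper bound and the simple socle $\IC_e$ of $\Delta_w$ (via \cite[Lemma 2.1]{TiltingExercises}) for the lower bound on the summand $x=y=e$. Your write-up in fact makes a couple of steps more explicit than the paper does (why the bottom weight layer of $\wt\Delta_w$ is $\wt{\IC}_e$ in weight $-l(w)$, and why every length $0,\ldots,\dim X$ is attained in $W$).
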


\normalfont

\bibliographystyle{alpha}
\bibliography{referenzen}

\def\cprime{$'$}
\begin{thebibliography}{JMW11}

\bibitem[Bas68]{BassKtheo}
Hyman Bass.
\newblock {\em Algebraic {$K$}-theory}.
\newblock W. A. Benjamin, Inc., New York-Amsterdam, 1968.

\bibitem[BBD82]{BBD}
A.~A. Be{\u\i}linson, J.~Bernstein, and P.~Deligne.
\newblock Faisceaux pervers.
\newblock In {\em Analysis and topology on singular spaces, {I} ({L}uminy,
  1981)}, volume 100 of {\em Ast\'erisque}, pages 5--171. Soc. Math. France,
  Paris, 1982.

\bibitem[BBM01]{TiltingExercises}
A.~Beilinson, R.~Bezrukavnikov, and I.~Mirkovic.
\newblock Tilting exercises.
\newblock {\em Moscow Mathematical Journal}, 4(3), 2001.

\bibitem[Be{\u\i}87]{DbPerv}
A.~A. Be{\u\i}linson.
\newblock On the derived category of perverse sheaves.
\newblock In {\em {$K$}-theory, arithmetic and geometry ({M}oscow,
  1984--1986)}, volume 1289 of {\em Lecture Notes in Math.}, pages 27--41.
  Springer, Berlin, 1987.

\bibitem[BGS96]{BGS}
Alexander Beilinson, Victor Ginzburg, and Wolfgang Soergel.
\newblock Koszul duality patterns in representation theory.
\newblock {\em J. Amer. Math. Soc.}, 9(2):473--527, 1996.

\bibitem[Bra02]{bradenPerverseOnGrass}
Tom Braden.
\newblock Perverse sheaves on {G}rassmannians.
\newblock {\em Canad. J. Math.}, 54(3):493--532, 2002.

\bibitem[Bre02]{KLPolyGrass}
Francesco Brenti.
\newblock {Kazhdan-Lusztig and R-Polynomials, Young's Lattice, and Dyck
  Partitions}.
\newblock {\em Pacific Journal of Mathematics}, 207(2), 2002.

\bibitem[BS10]{BrundanStroppel2}
Jonathan Brundan and Catharina Stroppel.
\newblock Highest weight categories arising from {K}hovanov's diagram algebra.
  {II}. {K}oszulity.
\newblock {\em Transform. Groups}, 15(1):1--45, 2010.

\bibitem[BS11a]{BrundanStroppel1}
Jonathan Brundan and Catharina Stroppel.
\newblock Highest weight categories arising from {K}hovanov's diagram algebra
  {I}: cellularity.
\newblock {\em Mosc. Math. J.}, 11(4):685--722, 821--822, 2011.

\bibitem[BS11b]{BrundanStroppel3}
Jonathan Brundan and Catharina Stroppel.
\newblock Highest weight categories arising from {K}hovanov's diagram algebra
  {III}: category {$\cal O$}.
\newblock {\em Represent. Theory}, 15:170--243, 2011.

\bibitem[BS12]{BrundanStroppel4}
Jonathan Brundan and Catharina Stroppel.
\newblock Highest weight categories arising from {K}hovanov's diagram algebra
  {IV}: the general linear supergroup.
\newblock {\em J. Eur. Math. Soc. (JEMS)}, 14(2):373--419, 2012.

\bibitem[Del80]{Weil2}
Pierre Deligne.
\newblock La conjecture de {W}eil. {II}.
\newblock {\em Inst. Hautes \'Etudes Sci. Publ. Math.}, (52):137--252, 1980.

\bibitem[Eke90]{Ekedahl}
Torsten Ekedahl.
\newblock {\em Grothendieck Festschrift}, volume~II of {\em Progress in
  Mathematics}, chapter On The Adic Formalism.
\newblock Birkh{\"a}user, 1990.

\bibitem[Hum08]{HumphreysO}
James~E. Humphreys.
\newblock {\em Representations of Semisimple Lie Algebras in the BGG Category
  $\mathcal O$}, volume~94 of {\em GSM}.
\newblock AMS, 2008.

\bibitem[JMW11]{JMWparity}
Daniel Juteau, Carl Mautner, and Geordie Williamson.
\newblock Parity sheaves.
\newblock 2011.

\bibitem[RSW14]{RSW}
Simon Riche, Wolfgang Soergel, and Geordie Williamson.
\newblock Modular {K}oszul duality.
\newblock {\em Compos. Math.}, 150(2):273--332, 2014.

\bibitem[Sch07]{OlafThesis}
Olaf Schn{\"u}rer.
\newblock {\em {Equivariant Sheaves on Flag Varieties, DG Modules and
  Formality}}.
\newblock PhD thesis, Fakult{\"a}t f{\"u}r Mathematik und Physik
  Universit{\"a}t Freiburg, 2007.
\newblock http://www.freidok.uni-freiburg.de/volltexte/4662/.

\bibitem[SGA72]{SGA4}
{\em Th\'eorie des topos et cohomologie \'etale des sch\'emas. {T}ome 2}.
\newblock Lecture Notes in Mathematics, Vol. 270. Springer-Verlag, Berlin-New
  York, 1972.
\newblock S{\'e}minaire de G{\'e}om{\'e}trie Alg{\'e}brique du Bois-Marie
  1963--1964 (SGA 4), Dirig{\'e} par M. Artin, A. Grothendieck et J. L.
  Verdier. Avec la collaboration de N. Bourbaki, P. Deligne et B. Saint-Donat.

\bibitem[Soe00]{SoergelPosChar}
W.~Soergel.
\newblock On the relation between intersection cohomology and representation
  theory in positive characteristic.
\newblock {\em Journal of pure and Applied Algebra}, 152, 2000.

\bibitem[Str09]{stroppelPerverseOnGrass}
Catharina Stroppel.
\newblock Parabolic category {$\mathcal O$}, perverse sheaves on
  {G}rassmannians, {S}pringer fibres and {K}hovanov homology.
\newblock {\em Compos. Math.}, 145(4):954--992, 2009.

\bibitem[WB12]{GeordieModInt}
Geordie Williamson and Tom Braden.
\newblock Modular intersection cohomology complexes on flag varieties.
\newblock {\em Math. Z.}, 272(3-4):697--727, 2012.

\bibitem[Wei13]{janEquivariant}
Jan Weidner.
\newblock {Modular equivariant formality}.
\newblock arXiv: 1312.4776, 2013.

\bibitem[Zel83]{SmallResGrass}
A.~V. Zelevinski{\u\i}.
\newblock Small resolutions of singularities of {S}chubert varieties.
\newblock {\em Funktsional. Anal. i Prilozhen.}, 17(2):75--77, 1983.

\end{thebibliography}

\end{document}